\newcommand{\enma}[1]   {\ensuremath{#1}}
\newcommand{\beq}{\begin{equation}}
\newcommand{\eeq}{\end{equation}}
\newcommand{\bseq}{\begin{subequations}}
\newcommand{\eseq}{\end{subequations}}
\newcommand{\beqn}{\begin{eqnarray}}
\newcommand{\eeqn}{\end{eqnarray}}
\newcommand{\ba}{\begin{array}}
\newcommand{\ea}{\end{array}}
\newcommand{\bct}{\begin{center}}
\newcommand{\ect}{\end{center}}
\newcommand{\btmz}{\begin{itemize}}
\newcommand{\etmz}{\end{itemize}}
\newcommand{\benum}{\begin{enumerate}}
\newcommand{\eenum}{\end{enumerate}}
\newcommand{\norm}[1]{\| #1 \|}                 
\newcommand{\diag}      {\enma{\mathrm{diag}}}
\newcommand{\trace}     {\enma{\mathrm{trace}}}
\newcommand{\matbegin}{
        \left[
}
\newcommand{\matend}{
        \right]
}
\newcommand{\obt}[2]{
  \matbegin \begin{array}{cc}
       #1 & #2
       \end{array} \matend }
\newcommand{\tbt}[4]{
  \matbegin \begin{array}{cc}
       #1 & #2 \\ #3 & #4
       \end{array} \matend }
\newcommand{\be}{\begin{equation}}
\newcommand{\ee}{\end{equation}}
\newcommand{\cplxs}{ C\kern -.35em \rule{0.03 em}{.7 ex}~   }
\def\complex{\hbox{C\kern -.45em \rule{0.03 em}{1.5 ex}}~}
\newcommand{\bi}{\begin{itemize}}
\newcommand{\ei}{\end{itemize}}
\newcommand\mystrut[2]{\vrule width0pt height#1 depth#2\relax}
\newtheorem{mythm}{Theorem}
\newtheorem{myprop}{Proposition}
\newtheorem{mylem}{Lemma}
\newtheorem{myrem}{Remark}
\newtheorem{mycor}{Corollary}
\newcommand{\R}{\mathbb{R}}
\newcommand{\bbR}{\mathbb{R}}
\newcommand{\non}{\nonumber}
\newcommand{\tc}{\textcolor}
\newcommand{\vsp}{\vspace*{0.15cm}}
\newcommand{\Ts}{T_s}
\newcommand{\DefinedAs}[0]{\mathrel{\mathop:}=}
\newcommand{\AsDefined}[0]{=\mathrel{\mathop:}}
\DeclareMathOperator*{\minimize}{minimize}
\DeclareMathOperator{\EX}{\mathbb{E}}
\newcommand{\gd}{\mathrm{gd}}
\newcommand{\na}{\mathrm{na}}
\newcommand{\hb}{\mathrm{hb}}
\newcommand{\Lf}{L}
\newcommand{\mf}{m}
\newcommand{\spec}{\uprho}
\definecolor{hmaroon}{RGB}{128,0,0}
\definecolor{dred}{rgb}{0.6,0.1,0}
\begin{document}
	
	\title{\LARGE \bf \bf Tradeoffs between convergence rate  
		and  noise amplification \\[0.15 cm]  for momentum-based 
		accelerated optimization algorithms
	}
	
	\author{
		\mbox{Hesameddin Mohammadi, Meisam Razaviyayn, and
			Mihailo R.\ Jovanovi\'c}
		\thanks{Financial support from the National Science Foundation under Awards ECCS~1708906 and ECCS~1809833  is gratefully acknowledged.
		}
		\thanks{Hesameddin Mohammadi and Mihailo R.\ Jovanovi\'c are with the Ming Hsieh Department of Electrical and Computer Engineering, University of Southern California, Los Angeles, CA 90089, USA. Meisam Razaviyayn is with the  Daniel J. Epstein Department of Industrial and Systems Engineering, University of Southern California, Los Angeles, CA 90089, USA (e-mails: hesamedm@usc.edu; mihailo@usc.edu; razaviya@usc.edu). 
		}
	}

	\maketitle
	

	\begin{abstract} 
		We study momentum-based first-order optimization algorithms in which the iterations utilize information from the two previous steps and are subject to an additive white noise. This setup uses noise to account for uncertainty in either gradient evaluation or iteration updates, and it includes Polyak's heavy-ball and Nesterov's accelerated methods as special cases.
		For strongly convex quadratic problems, we use the steady-state variance of the error in the optimization variable to quantify noise amplification and identify fundamental stochastic performance tradeoffs. 
		Our approach utilizes the Jury stability criterion to provide a novel geometric characterization of conditions for linear convergence, and it reveals the relation between the noise amplification and convergence rate as well as their dependence on the condition number and the constant algorithmic parameters.
		This geometric insight leads to simple alternative proofs of standard convergence results and allows us to establish ``uncertainty principle'' of strongly convex optimization: for the two-step momentum method with linear convergence rate, the lower bound on the product between the settling time and noise amplification scales quadratically with the condition number.
		Our analysis also identifies a key difference between the gradient and iterate noise models: while the amplification of gradient noise can be made arbitrarily small by sufficiently decelerating the algorithm, the best achievable variance for the iterate noise model increases linearly with the settling time in the decelerating regime. Finally, we introduce two parameterized families of algorithms that strike a balance between noise amplification and settling time while preserving order-wise Pareto optimality for both noise models. 	\end{abstract}

	\begin{keywords}
		First-order algorithms,  convergence rate, convex optimization, heavy-ball method, noise amplification, Nesterov's accelerated algorithm, performance tradeoffs, settling time. 
	\end{keywords}
	
	
	\vspace*{-3ex}
	\section{Introduction}
	
	Accelerated first-order algorithms~\cite{pol64,nes83,nes13} are often used for solving large-scale optimization problems~\cite{botcun05,honrazluopan15} because of their scalability, fast convergence, and low per-iteration complexity. Their convergence properties~\cite{badsei19,susmardahhin13,nes18book,lesrecpac16,hules17,cyrhuvanles18,VanFreLyn18,fazribmor18,schebehol23},  worst-case performance guarantees with respect to function variations~\cite{droteb14,kimfes18,tay17}, and extensions to non-smooth composite optimization problems~\cite{becteb09,ouychelanpas15,baihagzha22} have been carefully studied. However, stochastic performance in the presence of noise has received less attention~\cite{pol77,ben00,macduvada15,yuayinsay16,beirazshatar17,micschebe20}. Prior studies indicate that inaccuracies in the computation of gradient values can adversely impact the convergence rate of accelerated methods and that gradient descent may have advantages relative to its accelerated variants in noisy environments~\cite{luotse93,robmon51,nemjudlansha09,dev-phd13,dvugas16}. In contrast to gradient descent, accelerated algorithms can also exhibit {\em undesirable transient behavior\/}~\cite{doncan15,polsmi19,mohsamjovTAC23}; for convex quadratic problems, the non-normal dynamic modes in accelerated algorithms induce large transient responses of the error in the optimization variable~\cite{mohsamjovTAC23}.
		
	Analyzing the performance of accelerated algorithms with additive white noise that arises from uncertainty in gradient evaluation dates back to Polyak~\cite{pol77}. In this reference, Polyak established the optimal linear convergence rate for strongly convex quadratic problems and used time-varying parameters to obtain convergence in the error variance at a sub-linear rate with an improved constant factor compared to gradient descent.
	Under strong-convexity, noisy algorithms with constant parameters converge at a linear rate to a stationary distribution in Wasserstein distance; the convergence rate along with bounds on transient behavior and steady-state variance were obtained in~\cite{cangurzhu19}. The existence of tradeoffs between the convergence rate and the steady-state variance was also demonstrated.
	
		 Acceleration in a sub-linear regime can also be achieved for smooth strongly convex problems with diminishing stepsize~\cite{dev11} and averaging can be used to prevent the accumulation of gradient noise by accelerated algorithms~\cite{cohdiaore18}. 
	 In~\cite{ghalan13}, it has been further shown that the iteration complexity of any first-order noisy algorithm for strongly convex problems is subject to a fundamental lower bound that consists of bias and variance error terms which decay to zero at linear and sub-linear rates, respectively. To achieve this lower bound, a generic accelerated stochastic approximation framework was developed in~\cite{ghalan13}; this framework can be specialized to obtain optimal or nearly optimal methods. In addition, reference~\cite{aybfalgurozd19b} proposes a multi-stage algorithm based on properly adjusting the stepsize to strike a balance between noise amplification and convergence rate. Therein, the proposed Nesterov-like algorithm with explicit choice of parameters does not require knowledge of noise variance and \mbox{achieves Pareto optimality.}
	 
	 For standard accelerated methods with constant parameters, control-theoretic tools were utilized in~\cite{mohrazjovTAC21} and~\cite{vanles21} to study the steady-state variance of the error in the optimization variable for smooth strongly convex problems. In particular, for the parameters that optimize convergence rates for quadratic problems, tight upper and lower bounds on the noise amplification of gradient descent, heavy-ball method, and Nesterov's accelerated algorithm were developed~\cite{mohrazjovTAC21}. These bounds are expressed in terms of the condition number $\kappa$ and the problem dimension $n$, and they demonstrate opposite trends relative to the settling time: {\em for a fixed problem dimension $n$, accelerated algorithms increase noise amplification by a factor of $\Theta(\sqrt{\kappa})$ relative to gradient descent.\/} Similar result also holds for heavy-ball and Nesterov's algorithms with parameters that provide convergence rate $\rho \leq 1 - c/\sqrt{\kappa}$ with $c > 0$~\cite{mohrazjovTAC21}. Furthermore, for strongly convex optimization problems with a condition number $\kappa$, tight and attainable upper bounds for noise amplification of gradient descent and Nesterov's accelerated method were provided~\cite{mohrazjovTAC21}. 
		
	In this paper, we extend the results of~\cite{mohrazjovTAC21} to the class of first-order algorithms with three constant parameters in which the iterations involve information from the two previous steps. This class includes heavy-ball and Nesterov's accelerated algorithms as special cases and we examine its stochastic performance for strongly convex quadratic problems. Our results are complementary to~\cite{aybfalgurozd20}, which evaluates stochastic performance in both the objective and the iterate errors, and to a recent work~\cite{vanles21} which studies the steady-state variance of the error associated with the point at which the gradient is evaluated. Reference~\cite{vanles21} combined theory with computational experiments to demonstrate that a parameterized family of heavy-ball-like methods with reduced stepsize provides Pareto-optimal algorithms for simultaneous optimization of convergence rate and amplification of gradient noise. In contrast to~\cite{vanles21}, we establish analytical lower bounds on the product of the settling time and the steady-state variance of the error in the optimization variable that hold for any constant stabilizing parameters and for both gradient and iterate noise models. Our lower bounds scale with the square of the condition number and reveal a fundamental limitation of this class of algorithms. 
	
	In another related work~\cite{cangur22}, the tradeoff between the convergence rate and risk of sub-optimality for the class of two-step momentum algorithms was studied. Reference~\cite{cangur22} characterized the convergence rate and steady-state variance and proposed a systematic and computationally tractable approach based on solutions of semidefinite programming problems to achieve tradeoffs in the risk-averse setting for strongly convex problems. The impact of parameters on the convergence rate and steady-state variance for momentum-based algorithms with extensions to non-convex problems has also been studied~\cite{gitlanzhaxia19}. Therein, the authors utilized second-order Taylor series expansion in the stepsize to reveal non-intuitive trends for the effect of momentum parameters on the stationary variance. In the context of differential privacy, theoretical bounds along with numerical observations were used to quantify noise amplification of noisy accelerated algorithms~\cite{kurilkguryil22}. In addition, reference~\cite{kurilkguryil22} obtained optimal noise-robust heavy-ball algorithm and proposed multi-stage variants of accelerated algorithms that attenuate noise in the gradient while enjoying an improved convergence rate.
	
	In addition to considering noise in gradient evaluation, we study the stochastic performance of algorithms when noise is directly added to the iterates (rather than the gradient). For this iterate noise model, we establish an alternative lower bound on the noise amplification. This bound scales linearly with the settling time and is order-wise tight for settling times that are larger than that of gradient descent with the standard stepsize. In this decelerated regime, we identify a key difference between the two noise models: while the impact of gradient uncertainties on variance amplification can be made arbitrarily small by decelerating the two-step momentum algorithm, the best achievable variance for the iterate noise model increases linearly with the settling time in the decelerated regime.
	
	Our results build upon a simple, yet powerful geometric viewpoint, which clarifies the relation between condition number, convergence rate, and algorithmic parameters for strongly convex quadratic problems. This viewpoint allows us to present alternative proofs for (i) the optimal convergence rate of the two-step momentum algorithm, which recovers Nesterov's fundamental lower bound on the convergence rate~\cite{nes04book} for {\em finite dimensional problems\/}~\cite{arjshasha16}; and (ii) the optimal rates achieved by standard gradient descent, heavy-ball method, and Nesterov's accelerated algorithm~\cite{lesrecpac16}. In addition, it enables a novel geometric characterization of noise amplification in terms of stability margins and it allows us to precisely quantify tradeoffs between convergence rate and robustness to noise. 
		
	We also introduce two parameterized families of algorithms that are structurally similar to the heavy-ball and Nesterov's accelerated algorithms. These algorithms utilize continuous transformations from gradient descent to the corresponding accelerated algorithm (with the optimal convergence rate) via a homotopy path, and they can be used to provide additional insight into the tradeoff between convergence rate and noise amplification. We prove that these parameterized families are order-wise (in terms of the condition number) Pareto-optimal for simultaneous minimization of settling time and noise amplification. Another family of algorithms that facilitates similar tradeoff was proposed in~\cite{cyrhuvanles18}, and it includes the fastest known algorithm for the class of smooth strongly convex problems. We also utilize negative momentum parameters to decelerate a heavy-ball-like family of algorithms relative to gradient descent with the optimal stepsize. For both noise models, our parameterized family yields order-wise optimal algorithms and it allows us to further highlight the key difference between them in the decelerated regime.
	
	 In contrast to conjugate gradient methods that are exceedingly sensitive to poor-conditioning and noise, momentum-based first-order optimization algorithms are flexible enough to (i) be deployed to complex optimization landscapes and environments; and to (ii) benefit from recent extensive hardware optimization and parallelization in modern platforms that utilize GPUs. In spite of broad applicability of these algorithms with constant parameters, a clear understanding of fundamental tradeoffs between variance amplification and convergence rate is not available in the existing literature.  Our paper addresses this issue by (i) providing a novel geometric insight into the dependence of convergence rate and variance amplification on the algorithmic parameters; (ii) identifying the product between the variance amplification $J$ and the settling time $T_s$, $J\times T_s$, as an important ``conserved quantity'' of two-step momentum algorithms; and (iii) establishing {\em tight bounds\/} on $J\times T_s$ in terms of the \tc{blue}{square of the} condition number.
	
	The rest of the paper is organized as follows. In Section~\ref{sec.PerilimBack}, we provide preliminaries and background material and, in Section~\ref{sec.mainResult}, we summarize our key contributions. In Section~\ref{sec.modalDecomp}, we introduce the tools and ideas that enable our analysis. In particular, we utilize the Jury stability criterion to provide a novel geometric characterization of stability and $\rho$-linear convergence and exploit this insight to derive simple alternative proofs of standard convergence results and quantify fundamental stochastic performance tradeoffs. In Section~\ref{sec.Tuning}, we introduce two parameterized families of algorithms that allow us to constructively tradeoff settling time and noise amplification. In Section~\ref{sec.mainResultProof}, we provide proofs of our main results and, in Section~\ref{sec.concludingRemarks}, we conclude the paper. 						   
		
	\vspace*{-1ex}
	\section{Preliminaries and background}
	\label{sec.PerilimBack}
	
	For the unconstrained optimization problem
	\begin{align}\label{eq.f}
		\minimize\limits_{x} ~f(x)
	\end{align} 
	where  $f$: $\R^n\rightarrow\R$ is a strongly convex function with a Lipschitz continuous gradient $\nabla f$, we consider noisy momentum-based first-order algorithms that use information from the two previous steps to update the optimization variable:
	\be
	\ba{rcl}
	x^{t+2} 
	& \!\!\! = \!\!\! &
	x^{t+1}
	\; + \;
	\beta
	(
	x^{t+1}
	\, - \,  
	x^{t}
	)
	\, - \, 
	\alpha 
	\nabla 
	f
	\!
	\left(
	x^{t+1}
	\, + \,
	\gamma
	(
	x^{t+1}
	\, - \,  
	x^{t})
	\right)	
	\, + \;
	\sigma_w w^t.
	\ea
	\label{alg.TM_disc}
	\ee
	Here, $t$ is the iteration index, $\alpha$ is the stepsize, $\beta$ and $\gamma$ are momentum parameters, $\sigma_w$ is the noise magnitude, and $w^t$ is an additive white noise with zero mean and identity covariance,
	\[
	\EX 
	\left[
	w^t
	\right]
	\, = \; 
	0,
	~
	\EX 
	\left[
	w^t
	(w^{\tau})^T
	\right]
	\, = \; 
	I\,\delta (t \, - \, \tau)
	\]
	where $\delta$ is the Kronecker delta and $\EX$ is the expected value operator. In this paper, we consider two noise models.
	\begin{enumerate}
		\item Iterate noise ($\sigma_w=\sigma$): models uncertainty in computing the iterates of~\eqref{alg.TM_disc}, where $\sigma$ denotes the stepsize-independent noise magnitude.
		
		\item Gradient noise ($\sigma_w=\alpha \sigma$): models uncertainty in the gradient evaluation. In this case, the stepsize $\alpha$ directly impacts magnitude of the additive noise. 		
	\end{enumerate}
	
	Iterate noise model captures scenarios with uncertainties in optimization variables because of roundoff, quantization, and communication errors. This model has also been used to improve generalization and robustness in machine learning~\cite{he2019parametric}.  On the other hand, the second noise model accounts for gradient computation error or scenarios in which the gradient is estimated from noisy measurements~\cite{pol87}. Also, noise may be intentionally added to the gradient for privacy reasons~\cite{bassily2014private}. 
	
	\vsp 
	\begin{myrem}
		An alternative noise model with $\sigma_w = \sqrt{\alpha} \sigma$ has been used to escape local minima in stochastic gradient descent~\cite{gelmit91} and to provide non-asymptotic guarantees in non-convex learning~\cite{ragraktel17,zhaliacha17}. This model arises from a discretization of the continuous-time Langevin diffusion dynamics~\cite{ragraktel17} and, for strongly convex quadratic problems, our framework can be used to examine acceleration/robustness tradeoffs. For algorithms that are faster than the standard gradient descent, this model has order-wise identical performance bounds as the other two models and the only difference arises in decelerated regime. We omit details for brevity.
	\end{myrem}
	\vsp
	
	Special cases of~\eqref{alg.TM_disc} include noisy gradient descent ($\beta=\gamma=0$), Polyak's heavy-ball method $(\gamma=0)$, and Nesterov's accelerated algorithm $(\gamma=\beta)$. In the absence of noise (i.e., for $\sigma=0$), the parameters $(\alpha,\beta,\gamma)$ can be selected such that the iterates converge linearly to the globally optimal solution~\cite{nes18book}. For the family of smooth strongly convex problems, the parameters that yield the fastest known linear convergence rate were provided in~\cite{VanFreLyn18}.
		
		\vspace*{-1ex}
	\subsection{Linear dynamics for quadratic problems}
	\label{subsec.Quadratic}
	
	Let $\mathcal{Q}_{\mf}^{\Lf}$ denote the class of $\mf$-strongly convex $\Lf$-smooth quadratic functions 
	\begin{align}
		f(x)  
		\; = \;
		\tfrac{1}{2}
		\,
		x^T Q \, x
		\; - \; 
		q^T x
		\label{eq.quadraticObjective}
	\end{align}
	with the condition number $\kappa \DefinedAs \Lf/\mf$, where $q$ is a vector and $Q = Q^T \succ 0$ is the Hessian matrix with eigenvalues
	\[
	\Lf 
	\; = \;
	\lambda_1 
	\; \geq \; 
	\lambda_2 
	\; \geq \; 
	\ldots 
	\; \geq \;
	\lambda_n 
	\; = \; 
	\mf
	\; > \; 
	0.
	\]
	For the quadratic objective function in~\eqref{eq.quadraticObjective}, we can use a linear time-invariant (LTI) state-space model to describe the {\em two-step momentum algorithm\/}~\eqref{alg.TM_disc} with constant parameters,
	\begin{subequations}
		\be
		\ba{rcl}
		\psi^{t+1} 
		& \!\!\! = \!\!\! &
		A \,\psi^{t} 
		\; + \; 
		B \, w^t
		\\[0.1cm]
		z^t
		& \!\!\! = \!\!\! &
		C \, \psi^t
		\label{eq.ss}
		\ea
		\ee
		where $\psi^t$ is the state, $z^t \DefinedAs x^t-x^\star$ is the performance output that determines the error to the optimal solution $x^\star$, and $w^t$ is the white stochastic input. In particular, choosing
		$
		\psi^t 
		\DefinedAs 
		[  (x^t-x^\star)^T \; (x^{t+1}-x^\star)^T  ]^T
		$
		yields
		\be
		\hspace{-0.1 cm}
		\ba{rcl}
		A
		&\!\!\! = \!\!\!&
		\tbt{0}{I}{-\beta I +\gamma\alpha Q}{(1+\beta)I - (1+\gamma)\alpha Q},
		~~
		B^T
		\; = \;
		\obt{0}{\sigma_w I},
		~~
		C
		\; = \; 
		\obt{I}{0}.
		\ea
		\ee
		\label{eq.two-stepMomentumSS}
	\end{subequations}
	
	\vspace*{-0.4 cm}
	\subsection{Convergence rate}
	
	An algorithm is stable if in the absence of noise (i.e., $\sigma_w=0$),
	the state converges linearly with some rate $\rho<1$, 
	\begin{equation}\label{eq.linearConvergenceFormal}
		\norm{\psi^t}_2
		\; \le \; 
		c_t\, \rho^t \, \norm{\psi^0}_2~~\text{for all } t\;\ge\;1 
	\end{equation}
	for all $f\in \mathcal{Q}_{\mf}^{\Lf}$ and all initial conditions $\psi^0$, where $c_t>0$ grows at most polynomially with $t$. For LTI system~\eqref{eq.ss}, the spectral radius $\spec(A)$ determines the best achievable convergence rate. In addition,
	\begin{align}\label{eq.SettlingTimeDef}
		\Ts \;\DefinedAs\; {1}/({1\,-\,\rho})
	\end{align}
	provides the first-order approximation in $\epsilon$ to the {\em settling time\/}, i.e., the number of iterations required to reach a given desired accuracy $\epsilon$; see Appendix~\ref{app.settlingTime}.
	For the class $\mathcal{Q}_{\mf}^{\Lf}$ of high-dimensional functions (i.e., for $n\gtrsim\Ts$), Nesterov established the fundamental lower bound on the settling time (convergence rate) of any first-order algorithm~\cite{nes18book}, 
	\begin{align}\label{eq.NesFunLow}
		\Ts
		\; \ge \;
		(\sqrt{\kappa}\,+\,1)/{2}.
	\end{align}
	This lower bound is sharp and it is achieved by the heavy-ball method with the parameters provided in Table~\ref{tab:rates}~\cite{lesrecpac16}.
	We note that polynomial factors $c_t$ may appear because of non-monotonic transient responses for non-normal dynamics~\cite{mohsamjovTAC23,cangur22}. In addition, the restriction $n\gtrsim\Ts$ in~\eqref{eq.NesFunLow} can be lifted for the class of two-step momentum algorithms with constant parameters~\cite{arjshasha16}. Robust control techniques have also been used to extend this result to algorithms that involve more than two previous steps~\cite{ugrpetsha23}.

	\begin{table*}
		\centering
		\begin{tabular}{ |{l}|{l}|{l}|{l}| {l}|} 
			\hline \mystrut{0.35 cm}{0.1 cm}
			method &  fastest parameters $(\alpha,\beta,\gamma)$ &  $\Ts$ &
			$J_{\min}/\sigma_w^2$ & $J_{\max}/\sigma_w^2$
			\\
			\hline
			\mystrut{0.4 cm}{0.1 cm}
			Gradient
			& 
			$(2/({\Lf+\mf}),\,0,\,0)$
			& 
			$(\kappa +1)/2$ 
			&
			$
			\Theta(\kappa) + n
			$
			&
			$ n \Theta(\kappa)$
			\\[0.10cm] 
			Heavy-ball
			& 
			$(4/(\sqrt{\Lf}+\sqrt{\mf})^2,\,( 1-2/( \sqrt{\kappa} + 1 ))^2,\,0)$
			& 
			$(\sqrt{\kappa} + 1 )/2$ 
			&
			$
			\Theta(\kappa \sqrt{\kappa}) + n \Theta (\sqrt{\kappa})
			$
			&
			$
			n \Theta(\kappa \sqrt{\kappa})
			$
			\\[0.15cm] 
			Nesterov
			& 
			$({4}/({3\Lf+\mf}),\,1-4/({\sqrt{3\kappa+1}+2},\,\beta)$
			& 
			$\sqrt{3\kappa+1}/2  \!\!\!$ 
			&
			$ 
			\Theta(\kappa \sqrt{\kappa}) +n
			$
			&
			$
			n \Theta(\kappa \sqrt{\kappa})
			$
			\\[-.35cm]
			& 
			& 
			&
			&
			\\
			\hline
		\end{tabular}		
		\caption{Settling times $\Ts\DefinedAs1/(1 - \rho)$~\cite[Proposition~1]{lesrecpac16} along with the corresponding noise amplification bounds in~\eqref{eq.JmaxJminDef}~\cite[Theorem~4]{mohrazjovTAC21} for the parameters that optimize the linear convergence rate $\rho$ for strongly convex quadratic function $f\in\mathcal{Q}_{\mf}^{\Lf}$ with the condition number $\kappa\DefinedAs \Lf / \mf$. Here, $n$ is the dimension of $x$ and $\sigma_w^2$ is the variance of the white noise.}
		\vspace*{-0.8 cm}
		\label{tab:rates}
	\end{table*}

	\vspace*{-0.325cm}
	\subsection{Noise amplification}
	
	For LTI system~\eqref{eq.ss} driven by an additive white noise $w^t$,
	$
	\EX \left( \psi^{t+1} \right) 
	=  
	A  \EX \left( \psi^t \right).
	$ 
	Thus, $\EX \left( \psi^t \right) = A^t  \EX \left( \psi^0 \right)$ and, for any stabilizing parameters $(\alpha,\beta,\gamma)$, the iterates reach a statistical steady-state with $\lim_{t \, \to \, \infty} \EX \left( \psi^t \right) = 0$ and a variance that can be computed from the solution of the algebraic Lyapunov equation~\cite{kwasiv72,mohrazjovTAC21}. We call the steady-state variance of the error in the optimization variable noise (or variance) amplification,
	\be
	J
	\; \DefinedAs \;
	\lim_{t \, \to \, \infty} 
	\, 
	\dfrac{1}{t}
	\sum_{k \, = \, 0}^{t}
	\EX
	\left( \norm{x^k \, - \, x^\star}_2^2 \right).
	\label{eq.Jnew}
	\ee
	In addition to the algorithmic parameters $(\alpha,\beta,\gamma)$, the entire spectrum $\{\lambda_i\,|\, i = 1,\ldots, n\}$ of the Hessian matrix $Q$ impacts the noise amplification $J$ of algorithm~\eqref{alg.TM_disc}~\cite{mohrazjovTAC21}. 
		\begin{myrem}\label{rem.diffPerformanceMetric}
		An alternative performance metric that examines the steady-state variance of $y^t - x^\star$ was considered in~\cite{vanles21}, where $y^t \DefinedAs x^t + \gamma(x^t-x^{t-1}) $ is the point at which the gradient is evaluated in~\eqref{alg.TM_disc}. For all $\gamma\ge0$, we have
		$
		J_{x}
		\le
		J_{y}
		\le
		(1+2\gamma)^2
		J_{x}
		$,
		where the subscripts $x$ and $y$ denote the noise amplification in terms of the error in $x^t$ and $y^t$. Thus, these performance metrics are within a constant factor of each other for bounded values of non-negative momentum parameter $\gamma$. 
	\end{myrem}
	
	\vspace*{-2ex}
	\subsection{Parameters that optimize convergence rate}
	
For special instances of the two-step momentum algorithm~\eqref{alg.TM_disc} applied to strongly convex quadratic problems, namely gradient descent ($\gd$), heavy-ball method ($\hb$), and Nesterov's accelerated algorithm ($\na$), the parameters that yield the fastest convergence rates were established in~\cite{pol77,lesrecpac16}. These parameters along with the corresponding rates and the noise amplification bounds are provided in Table~\ref{tab:rates}. The convergence rates are determined by the spectral radius of the corresponding $A$-matrices and the noise amplification bounds are computed by examining the solution to the algebraic Lyapunov equation and determining the functions $f \in \mathcal{Q}_{\mf}^{\Lf}$ for which the steady-state variance is maximized/minimized~\cite[Proposition~1]{mohrazjovTAC21}. Since the optimal rate for the heavy-ball method meets the fundamental lower bound~\eqref{eq.NesFunLow}, this choice of parameters also optimizes the convergence rate of~\eqref{alg.TM_disc} for $f \in \mathcal{Q}_{\mf}^{\Lf}$.

For the optimal parameters provided in Table~\ref{tab:rates}, there is a $\Theta(\sqrt{\kappa})$ improvement in settling times of the heavy-ball and Nesterov's accelerated algorithms relative to gradient descent, 
\beq
\label{eq.settlingTimeOptimalRates}
\Ts \; = \,
\left\{
\ba{rl}
\Theta(\kappa) & \gd
\\[0.10cm]
\Theta(\sqrt{\kappa}) & \hb, \na
\ea
\right.
\eeq
where $a=\Theta(b)$ means that $a$ lies within constant factors of $b$ as $b\rightarrow\infty$. This improvement makes accelerated algorithms popular for problems with large condition number $\kappa$. 

While convergence rate is only affected by the largest and smallest eigenvalues of $Q$, the entire spectrum of $Q$ influences the noise amplification $J$. On the other hand, the largest and smallest values of $J$ over the function class $\mathcal{Q}_{\mf}^{\Lf}$,
\beq\label{eq.JmaxJminDef}
J_{\max} 
\, \DefinedAs \,
\displaystyle{\max_{f \, \in \, \mathcal{Q}_{\mf}^{\Lf}} J}
,\qquad
J_{\min} 
\, \DefinedAs \,
\displaystyle{\min_{f \, \in \, \mathcal{Q}_{\mf}^{\Lf}} J}
\eeq 
depend only on the noise magnitude $\sigma_w$, the algorithmic parameters ($\alpha,\beta,\gamma$), the problem dimension $n$, and the extreme eigenvalues $\mf$ and $\Lf$ of $Q$. 

For the parameters that optimize convergence rates, tight upper and lower bounds on the noise amplification were developed in~\cite[Theorem~4]{mohrazjovTAC21}. These bounds are expressed in terms of the condition number $\kappa$ and the problem dimension $n$, and they demonstrate opposite trends relative to the settling time. In particular, for gradient descent,
\begin{subequations}
	\label{eq.noiseamplificationOptimalRates}
	\beq \label{eq.NoiseAmpOptimalRatesGradient}
	J_{\max} 
	\; = \;
	\sigma_w^2 n \Theta(\kappa)
	,\qquad
	J_{\min} 
	\; = \;
	\sigma_w^2 
	(\Theta(\kappa) + n)
	\eeq
	and for accelerated algorithms,
	\be\label{eq.NoiseAmpOptimalRatesAccelerated}
	\ba{rcl}
	J_{\max} 
	&\!\!\!=\!\!\!&
	\sigma_w^2 n \Theta(\kappa \sqrt{\kappa})
	\\[0.18 cm]
	J_{\min} 
	&\!\!\!=\!\!\!&
	\left\{
	\ba{rl}
	\sigma_w^2 
	(\Theta(\kappa \sqrt{\kappa}) \,+\, n \Theta (\sqrt{\kappa})) & \hb
	\\[0.15cm]
	\sigma_w^2 
	(\Theta(\kappa \sqrt{\kappa}) \,+\, n) & \na.
	\ea
	\right.
	\ea
	\ee
\end{subequations}

Thus, for fixed problem dimension $n$ and noise magnitude $\sigma_w$, accelerated algorithms increase noise amplification by a factor of $\Theta(\sqrt{\kappa})$ relative to gradient descent for the parameters that optimize convergence rates. While similar result also holds for heavy-ball and Nesterov's algorithms with arbitrary values of parameters $\alpha$ and $\beta$ that provide settling time $\Ts \leq c\sqrt{\kappa}$ with $c > 0$~\cite[Theorem~8]{mohrazjovTAC21}, in this paper we establish fundamental tradeoffs between noise amplification and settling time for the class of the two-step momentum algorithms~\eqref{alg.TM_disc} with arbitrary stabilizing values of constant parameters ($\alpha,\beta,\gamma$).

\vspace*{-2ex}
	\section{Summary of main results}
	\label{sec.mainResult}
	
	In this section, we summarize our key contributions regarding robustness/convergence tradeoff for noisy two-step momentum algorithm~\eqref{alg.TM_disc}. In addition, our geometric characterization of stability and $\rho$-linear convergence allows us to provide alternative proofs of standard convergence results and quantify fundamental performance tradeoffs. The proofs of results presented here can be found in Section~\ref{sec.mainResultProof}.  
			
	\vspace*{-2ex}
	\subsection{Bounded noise amplification for stabilizing parameters} 
	
	For a discrete-time LTI system with a convergence rate $\rho$, the distance of the eigenvalues to the unit circle is larger than $1-\rho$. We use this stability margin to establish an upper bound on the noise amplification $J$ of the two-step momentum method~\eqref{alg.TM_disc} for {\em any\/} stabilizing parameters $(\alpha,\beta,\gamma)$. 
	
	\vsp
	\begin{mythm}[Upper bounds]
	\label{thm.mainUpperbound}
		Let the parameters $(\alpha,\beta,\gamma)$ be such that the two-step momentum algorithm~\eqref{alg.TM_disc} converges linearly with the rate $\rho=1-1/\Ts$ for all $f\in\mathcal{Q}_{\mf}^{\Lf}$. Then, 
		\begin{subequations}\label{eq.mainUpperBound}
			\begin{align}\label{eq.mainUpperBoundsigma1}
				J
				\;\le\;
				\dfrac{\sigma_w^2 (1\,+\,\rho^2)}{(1\,+\,\rho)^3} \, n \, \Ts^3
			\end{align}
			where $n$ is the problem size. 		
			Furthermore, for the gradient noise model ($\sigma_w=\alpha\sigma$), 	
			\begin{align}\label{eq.mainUpperBoundsigmaalpha}
				J
				\;\le\;
				\dfrac{\sigma^2 (1\,+\,\rho)(1\,+\,\rho^2)}{\Lf^2 } \, n \, \Ts^3.
			\end{align}
		\end{subequations}
	\end{mythm}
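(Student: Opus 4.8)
The plan is to exploit the modal (Jury-type) decomposition of the LTI representation~\eqref{eq.ss}--\eqref{eq.two-stepMomentumSS} developed in Section~\ref{sec.modalDecomp}. Since $Q=Q^T\succ0$, an orthogonal similarity transformation simultaneously diagonalizes the two polynomial-in-$Q$ blocks of $A$, so the $2n$-dimensional dynamics decouple into $n$ scalar second-order subsystems, one per eigenvalue $\lambda_i$ of $Q$, each driven by white noise of variance $\sigma_w^2$. The performance output $z^t=x^t-x^\star$ splits conformably, so the noise amplification is additive over modes, $J=\sum_{i=1}^{n}J_{\lambda_i}$, and it suffices to prove $J_\lambda\le\sigma_w^2(1+\rho^2)\Ts^3/(1+\rho)^3$ for a single mode.

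Fix a mode $\lambda$. Its $2\times2$ system matrix $A_\lambda$ is in companion form with characteristic polynomial $p_\lambda(z)=z^2+a_1z+a_2$, where $a_1=-(1+\beta)+(1+\gamma)\alpha\lambda$ and $a_2=\beta-\gamma\alpha\lambda$; a short computation gives $p_\lambda(1)=\alpha\lambda$ and $p_\lambda(-1)=(1+z_1)(1+z_2)$, where $z_1,z_2$ are the roots of $p_\lambda$. Because $\rho<1$, the discrete algebraic Lyapunov equation for this subsystem has a unique solution, and solving it (equivalently, the Yule--Walker equations of the induced AR(2) recursion) yields the closed form
\[
J_\lambda
\;=\;
\sigma_w^2\,\dfrac{1+a_2}{(1-a_2)\,p_\lambda(1)\,p_\lambda(-1)} .
\]
Then I would invoke the hypothesis: $\rho$-linear convergence for every $f\in\mathcal{Q}_{\mf}^{\Lf}$ implies $\spec(A)\le\rho$, hence $|z_1|,|z_2|\le\rho$ for each mode. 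Since $p_\lambda$ has real coefficients, its roots are either both real in $[-\rho,\rho]$ or a complex-conjugate pair of modulus at most $\rho$. In both cases $a_2=z_1z_2$ satisfies $|a_2|\le\rho^2$, so $1+a_2\le1+\rho^2$ and $1-a_2\ge1-\rho^2$, while $p_\lambda(1)\,p_\lambda(-1)=(1-z_1^2)(1-z_2^2)\ge(1-\rho^2)^2$: for two real roots each factor lies in $[1-\rho^2,1]$, and for a conjugate pair the product equals $|1-z_1^2|^2\ge(1-|z_1|^2)^2$. Since all three denominator factors are positive, this gives $J_\lambda\le\sigma_w^2(1+\rho^2)/(1-\rho^2)^3=\sigma_w^2(1+\rho^2)\Ts^3/(1+\rho)^3$ using $\Ts=1/(1-\rho)$; summing over $i=1,\dots,n$ yields~\eqref{eq.mainUpperBoundsigma1}.

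For the gradient-noise model I would additionally exploit $p_\lambda(1)=\alpha\lambda$. Stability rules out $z=1$ as a root, so $\alpha\lambda=p_\lambda(1)=(1-z_1)(1-z_2)>0$, which forces $\alpha>0$; and since $|1-z_i|\le1+\rho$ the same identity gives $\alpha\lambda\le(1+\rho)^2$ for every eigenvalue, in particular $\alpha\Lf\le(1+\rho)^2$, i.e.\ $\alpha^2\le(1+\rho)^4/\Lf^2$. Substituting $\sigma_w^2=\alpha^2\sigma^2$ into~\eqref{eq.mainUpperBoundsigma1} produces $J\le\sigma^2(1+\rho)(1+\rho^2)\,n\,\Ts^3/\Lf^2$, which is~\eqref{eq.mainUpperBoundsigmaalpha}.

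The steps needing care are (i) the modal reduction and the additivity $J=\sum_i J_{\lambda_i}$ --- routine given Section~\ref{sec.modalDecomp}, but it should be phrased so it covers modes with repeated roots, where the Lyapunov solution and the closed form above still hold (by direct substitution or a limiting argument); and (ii) the uniform lower bound $p_\lambda(1)\,p_\lambda(-1)\ge(1-\rho^2)^2$, which is the single place the geometric description of the admissible root region really enters. I expect everything else to be elementary algebra, so the main obstacle is really just organizing the root-location case analysis cleanly.
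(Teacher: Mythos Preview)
Your proof is correct and yields the identical sharp constant as the paper. Your closed form $J_\lambda=\sigma_w^2(1+a_2)/[(1-a_2)\,p_\lambda(1)\,p_\lambda(-1)]$ is algebraically the same as the paper's Theorem~\ref{thm.varianceJhat}: in its notation $a_2=a$, $p_\lambda(1)=d$, $p_\lambda(-1)=l$, and $d+l=2(1+a)$, $h=1-a$, so $\hat J=\sigma_w^2(d+l)/(2dhl)=\sigma_w^2(1+a)/[(1-a)dl]$. The bound $\alpha\Lf\le(1+\rho)^2$ is also the same step the paper uses for~\eqref{eq.mainUpperBoundsigmaalpha}.

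Where you differ is in how you maximize $\hat J$ over the admissible region. The paper observes (Appendix~\ref{app.convexity}) that $\hat J$ is convex in $(b,a)$ over the stability triangle $\Delta$, so its maximum over the closed triangle $\Delta_\rho$ is attained at a vertex; evaluating at $X_\rho,Y_\rho,Z_\rho$ gives $\hat J_{\max}=\sigma_w^2(1+\rho^2)/(1-\rho^2)^3$. You instead bound each factor directly from the root locations $|z_i|\le\rho$, splitting into real and complex-conjugate cases. Your route is more self-contained (no convexity lemma needed) and makes explicit which root configurations saturate which inequalities; the paper's route is cleaner once its geometric framework is in place and reuses machinery that it needs anyway for the lower bounds. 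Both reach the same bound because the vertex $X_\rho$ (or $Y_\rho$) corresponds exactly to a repeated root at $\pm\rho$, the case that saturates all three of your inequalities simultaneously.
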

	
	For $\rho<1$, both upper bounds in~\eqref{eq.mainUpperBound} scale with $n \Ts^3$ and they are exact for the heavy-ball method with the parameters that optimize the convergence rate provided by Table~\ref{tab:rates}. However, these bounds are not tight for all stabilizing parameters; e.g., applying~\eqref{eq.mainUpperBoundsigma1} to gradient descent with the optimal stepsize $\alpha = 2/(\Lf+\mf)$ yields
	$
	J
	\le
	\sigma_w^2 n \Theta(\kappa^3),
	$
	which is off by a factor of $\kappa^2$; cf.~Table~\ref{tab:rates}. The bound in~\eqref{eq.mainUpperBoundsigmaalpha} is obtained by combining~\eqref{eq.mainUpperBoundsigma1} with 
	$
	\alpha L \le  (1 + \rho)^2,
	$
	which follows from  the conditions for $\rho$-linear convergence in Section~\ref{sec.modalDecomp}.  For entropic risk, bounds on noise amplification were derived in~\cite{cangur22}. In contrast, Theorem~\ref{thm.mainUpperbound} uses a geometric viewpoint to capture an explicit, exact cubic dependence of $J_{\max}$ on the settling time.
		
	\vspace*{-2ex}
	\subsection{Tradeoff between settling time and noise amplification}
	
	For any stabilizing constant parameters ($\alpha,\beta,\gamma$) in the two-step momentum algorithm~\eqref{alg.TM_disc}, we next establish lower bounds on the largest and the smallest noise amplification $J_{\max}$ and $J_{\min}$ over the class of functions $\mathcal{Q}_{\mf}^{\Lf}$, as defined in~\eqref{eq.JmaxJminDef}, in terms of the settling time $\Ts$. 
		\vsp

	\begin{mythm}[Reciprocal lower bounds]\label{thm.LowerJ} Let the parameters $(\alpha,\beta,\gamma)$ be such that the two-step momentum algorithm~\eqref{alg.TM_disc} converges linearly with the rate $\rho=1-1/\Ts$ for all $f\in\mathcal{Q}_{\mf}^{\Lf}$. Then, $J_{\max}$ and $J_{\min}$ in~\eqref{eq.JmaxJminDef} satisfy,
		\begin{subequations}
			\begin{align}
				J_{\max}
				&\;\ge\;
				\sigma_w^2
				\!
				\left((n\,-\,1) \dfrac{\kappa^2}{64}
				\,+\,
				\dfrac{\sqrt{\kappa}+1}{2}\right) \Ts^{-1}
				\label{eq.thmLowerJMax}
				\\[-0.cm]
				J_{\min}
				&\;\ge\;
				\sigma_w^2
				\!
				\left(\dfrac{\kappa^2}{64}
				\,+\,
				(n\,-\,1) 
				\dfrac{\sqrt{\kappa}+1}{2}\right) \Ts^{-1}.
				\label{eq.thmLowerJMin}
			\end{align}
				\end{subequations}
			Furthermore, for the gradient noise model ($\sigma_w=\alpha\sigma$), we have
				\begin{subequations}
			\begin{align}	
				\hspace{-0.1 cm}
				J_{\max}
				&\;\ge\;
				\dfrac{\sigma^2}{\Lf^2}
				\!
				\left(
				(n\,-\,1) 
				\dfrac{\kappa^2}{4}
				\,+\,
				\max\left\{ \dfrac{\kappa^2}{\Ts^3}, \, \dfrac{1}{4}\right\}
				\right)\Ts^{-1}
				\label{eq.thmLowerJMax_withAlpha}
				\\[-0.cm]
				\hspace{-0.1 cm}
				J_{\min}
				&\;\ge\;
				\dfrac{\sigma^2}{\Lf^2}
				\!
				\left(
				\dfrac{\kappa^2}{4}
				\,+\,
				(n\,-\,1) 
				\max\left\{ \dfrac{\kappa^2}{\Ts^3}, \, \dfrac{1}{4}\right\}
				\right)\Ts^{-1}.
				\label{eq.thmLowerJMin_withAlpha}
			\end{align}
		\end{subequations}
	\end{mythm}
	
	For both noise models, the condition number $\kappa$ restricts the performance of the two-step momentum algorithm with constant parameters: {\em for a fixed problem size $n$, all four lower bounds in Theorem~\ref{thm.LowerJ} demonstrate the quadratic dependence on the condition number $\kappa$ for both $J_{\max}\times \Ts$ and $J_{\min}\times \Ts$.\/} Relative to the dominant term in $\kappa$, the problem dimension $n$ appears in a multiplicative fashion for the lower bounds on $J_{\max}$ and in an additive fashion for the lower bounds on $J_{\min}$.  We note that the fundamental lower bound on $\Ts$ in~\eqref{eq.NesFunLow} holds for large problem dimension $n$ for any first-order algorithm. In contrast, Theorem~\ref{thm.LowerJ} holds for arbitrary $n$ for the class of two-step momentum algorithms with constant parameters.
	
	\vsp
	
		\begin{mythm}[Linear lower bounds]\label{thm.LowerBoundSettlingTimeBased}
		Let the parameters $(\alpha,\beta,\gamma)$ be such that the two-step momentum algorithm~\eqref{alg.TM_disc}
		achieves the convergence rate $\rho=\spec(A)=1-1/\Ts$, where the matrix $A$ is given by~\eqref{eq.two-stepMomentumSS}. Then, $J_{\max}$ and $J_{\min}$ satisfy,
		\begin{subequations}\label{eq.LowerBoundSettlingTimeBased}
			\begin{align}
				J_{\max}
				&\;\ge\;
				\sigma_w^2\left((n \, - \, 1) \, \dfrac{\Ts}{2(1\,+\,\rho)^2}  \,+\, 1\right)
				\label{eq.JmaxLowerBoundSettlingTimeBased}
				\\[-0.cm]
				J_{\min}
				&\;\ge\;
				\sigma_w^2\left(\dfrac{ \Ts}{2(1\,+\,\rho)^2}  \,+\, (n \, - \, 1) \right).
				\label{eq.JminLowerBoundSettlingTimeBased}
			\end{align}
		\end{subequations}
	\end{mythm}
	
	We  observe that the lower bounds on $J_{\max}$ and $J_{\min}$ in Theorem~\ref{thm.LowerBoundSettlingTimeBased} grow linearly with $\Ts$.

	\subsection{Accuracy of lower bounds}

	In this subsection, we  establish upper bounds on $J_{\max}$ and $J_{\min}$ for a parameterized family of heavy-ball-like algorithms in terms of the settling time $\Ts$. By comparison to the lower bounds established in Theorems~\ref{thm.LowerJ} and~\ref{thm.LowerBoundSettlingTimeBased}, we prove that for any settling time $\Ts$ these bounds are {\em order-wise tight\/} in $\kappa$.
	\vsp
	\begin{mythm}[Upper bounds]\label{thm.mainResultHBLike}
		For the class of strongly convex quadratic functions $\mathcal{Q}_{\mf}^{\Lf}$ with the condition number $\kappa=\Lf/\mf$, let the scalar $\rho$ be such that the fundamental lower bound $\Ts = 1/(1 - \rho)\ge (\sqrt{\kappa} + 1)/2$ given by~\eqref{eq.NesFunLow} holds.
		Then, the two-step momentum algorithm~\eqref{alg.TM_disc} with parameters
		\beq
		\alpha
		\,=\,
		\dfrac{(1 + \rho)(1 + \beta/\rho)}{\Lf},
		~~
		\beta
		\,=\,
		\rho 
		\,
		\dfrac{\kappa - (1 + \rho)/(1 - \rho)}{\kappa + (1 + \rho)/(1 - \rho)}
		\label{eq.HBparameters}
		\eeq
and $\gamma=0$, converges linearly with the rate $\rho$. In addition,  $J_{\max}$ and $J_{\min}$ in~\eqref{eq.JmaxJminDef} satisfy
\begin{subequations}\label{eq.thm.mainResultHBLikeLinearGrowthT}
	\begin{align}\label{eq.thm.mainResultHBLikeJmaxLinearGrowth}
		J_{\max}
		&\,\le\,
		\left\{
		\ba{ll}
			\sigma_w^2 \, n \kappa \, \left((\kappa \,+\, 1) /2\right) \Ts^{-1} & \text{if}\quad\Ts\le \tau
			\\[0.15 cm]
			\sigma_w^2n\,\Ts & \text{if}\quad \Ts\ge  \tau
		\ea
		\right.\!\!
		\\[0.15 cm]
		\label{eq.thm.mainResultHBLikeJminLinearGrowth}
		J_{\min}
		&\,\le\,
		\left\{
		\ba{ll}
			 \sigma_w^2 \, \kappa \, \left(
			 \kappa \,+\, n \,-\, 1 
			 \right) \Ts^{-1}& \text{if} \quad \Ts\le \tau
			\\[0.15 cm]
			2\sigma_w^2
			(1+(n-2)/\kappa)
			\,\Ts & \text{if} \quad \Ts\ge \tau
		\ea
		\right.
	\end{align}
\end{subequations}
where $\tau\DefinedAs(\kappa+1)/2$.
Furthermore, for the gradient noise model ($\sigma_w=\alpha\sigma$), we have
	\begin{subequations}
			\begin{align}\label{eq.thm.mainResultHBLikeJmaxKappaSquareGradNoise}
				J_{\max}
				&\;\le\;
				\sigma^2 \, n \kappa \left((\kappa \,+\, 1)/\Lf^2\right) \Ts^{-1}
				\\[0.cm]
				\label{eq.thm.mainResultHBLikeJminKappaSquareGradNoise}
				J_{\min}
				&\;\le\;
				\sigma^2 \, 2\kappa \left( (\kappa \,+\, 4n\,-\,7)/\Lf^2\right) \Ts^{-1}.
			\end{align}
		\end{subequations}
	\end{mythm}
	
	Theorem~\ref{thm.mainResultHBLike} provides upper bounds on $J_{\max}$ and $J_{\min}$ for a  family of heavy-ball-like algorithms ($\gamma = 0$) parameterized by the settling time $\Ts$. We note that the condition $\Ts \ge (\kappa+1)/2$ in Theorem~\ref{thm.mainResultHBLike} corresponds to non-positive values of the momentum parameter $\beta \leq 0$. For the iterate noise model with $\Ts \le (\kappa+1)/2$ and for the gradient noise model with any settling time, the upper bounds in~Theorem~\ref{thm.mainResultHBLike} scale as $\kappa^2$ for both $J_{\max}\times \Ts$ and $J_{\min}\times \Ts$. This scaling  matches the scaling of the corresponding lower bounds in Theorem~\ref{thm.LowerJ}. Thus, for the gradient noise model, the upper and lower bounds are order-wise tight (in $\kappa$) for any settling time. On the other hand, for the iterate noise model, the lower bounds in Theorem~\ref{thm.LowerJ} are tight only in the accelerated regime $\Ts \leq (\kappa+1)/2$. For this noise model, in the non-accelerated regime $\Ts \geq (\kappa+1)/2$, the alternative lower bounds established in Theorem~\ref{thm.LowerBoundSettlingTimeBased} are tight as they order-wise match the upper bounds in Theorem~\ref{thm.mainResultHBLike}.

	\vsp
	\vsp
	
	\begin{myrem}
		Since $\mathcal{Q}_{\mf}^{\Lf}$ is a subset of the class of $\mf$-strongly convex functions with $\Lf$-Lipschitz continuous gradients, the fundamental lower bounds on  $J_{\max}\times\Ts$ established in Theorem~\ref{thm.LowerJ} carry over to this broader class of problems. Thus, the restriction imposed by the condition number on the tradeoff between settling time and noise amplification goes beyond $\mathcal{Q}_{\mf}^{\Lf}$ and holds for general strongly convex problems.
	\end{myrem}

	\vsp
	
	\begin{myrem}
		The upper bounds in Theorem~\ref{thm.mainResultHBLike} are obtained for a particular choice of constant parameters. Thus, they also provide upper bounds on the best achievable noise amplification bounds 
		$
		J^\star_{\max} \DefinedAs \min_{\alpha,\beta,\gamma} J_{\max}
		$ 
		and
		$
		J^\star_{\min} \DefinedAs \min_{\alpha,\beta,\gamma} J_{\min}
		$
		for a settling time $\Ts$; see Figure~\ref{fig.summary}.
\end{myrem}
	
	\begin{figure*}
	\centering
	\arrayrulecolor{black}
	\begin{tabular}{cc}
		\begin{tabular}{ !{\color{black}\vline}c!{\color{black}\vline}@{\hspace{0.1 cm}}@{\hspace{-0.1 cm}}r@{\hspace{0.2 cm}}c@{\hspace{0.25 cm}}l|@{\hspace{0.1 cm}}r@{\hspace{0.08 cm}}!{\color{black}\vline}} 
			\hline
			\multirow{12}{0.5em}{\rotatebox{90}{Iterate noise $\sigma_w = 1$}}
			&
			\mystrut{0.45 cm}{0.3 cm}
			$J$ 
			&
			$\le$
			&
			$\frac{1+\rho^2}{(1+\rho)^3} \, n \,\Ts^3$
			&
			\eqref{eq.mainUpperBoundsigma1}\tikz[baseline=-0.9ex]\node[blue,mark size=0.35ex]{\pgfuseplotmark{otimes}};
			\\
			\cline{2-5}
			\mystrut{0.7 cm}{0.4 cm}
			&
			$~ J^{\star}_{\max}$
			&
			$\le$
			&
			$
			\left\{\ba{lr}
			\tfrac{1}{2} \, n \kappa \, (\kappa \,+\, 1)\,\Ts^{-1} & \text{\,~~if~~~ $\Ts\;\le\;(\kappa+1)/2$}
			\\[0.1 cm]
			n\,\Ts &   \text{\,~~if~~~ $\Ts\;\ge\;(\kappa+1)/2$}
			\ea
			\right.
			$
			& 
			\eqref{eq.thm.mainResultHBLikeJmaxLinearGrowth}\tikz[baseline=-0.9ex]\node[color=dred,mark size=0.3ex]{\pgfuseplotmark{square*}};
			\\
			\mystrut{0.6 cm}{0.4 cm}
			&$J^{\star}_{\max}$&$\ge$&
			$\!\max\left\{\!\left(\frac{1}{64}(n-1)\kappa^2
			+
			\frac{\sqrt{\kappa}+1}{2}\right)\!\Ts^{-1}, \frac{1}{8}(n-1)\Ts + 1\!
			\right\}\!\!\!
			$
			&
			\hspace{-0.2 cm}\begin{tabular}{l}
				\eqref{eq.thmLowerJMax}
				\\[-0.15 cm]~~~~~~\tikz[baseline=-0.9ex]\node[color=red,mark size=0.4ex]{\pgfuseplotmark{triangle*}};\\[-0.17 cm]
				\eqref{eq.JmaxLowerBoundSettlingTimeBased}
			\end{tabular}\hspace{-0.2 cm}
			\\
			\cline{2-5} 
			\mystrut{0.7 cm}{0.4 cm}
			&
			$J^{\star}_{\min}$
			&$\le$& 
			$\left\{\ba{lr}
			\kappa\left(
			\kappa 
			\,+\,
			n
			\,-\,1
			\right)\Ts^{-1} & \text{if~~~ $\Ts\;\le\;(\kappa+1)/2$}
			\\[0.1 cm]
			2 \, (1 \, + \, \frac{n\,-\,2}{\kappa}) \, \Ts &   \text{if~~~ $\Ts\;\ge\;(\kappa+1)/2$}
			\ea
			\right.
			$
			&\eqref{eq.thm.mainResultHBLikeJminLinearGrowth}\tikz[baseline=-0.9ex]\node[color=dred,mark size=0.3ex]{\pgfuseplotmark{square}};
			\\
			\mystrut{0.6 cm}{0.4 cm}
			&
			$J^{\star}_{\min}$
			&$\ge$ & $
			\!\max\left\{\!\left(\frac{1}{64}\kappa^2
			\,+\,
			(n-1) 
			\frac{\sqrt{\kappa}+1}{2}\right)\Ts^{-1}, \frac{1}{8} \, \Ts+ n-1
			\right\}\!\!
			$
			&
			\hspace{-0.2 cm}\begin{tabular}{l}
				\eqref{eq.thmLowerJMin}
				\\[-0.15 cm]~~~~~~\tikz[baseline=-0.9ex]\node[color=red,mark size=0.4ex]{\pgfuseplotmark{triangle}};\\[-0.17 cm]
				\eqref{eq.JminLowerBoundSettlingTimeBased}		
			\end{tabular}\hspace{-0.2 cm}
			\\
			\hline
			\arrayrulecolor{white}
			\hline
			\hline
			\hline
			\hline
			\hline
			\arrayrulecolor{black}
			\hline
			&&&&
			\\[-.25cm]
			\multirow{10}{0.5em}{\rotatebox{90}{Gradient noise $\sigma_w=\alpha$}}
			&
			\mystrut{0.3 cm}{0.3 cm}
			$J$ 
			&
			$\le$
			&
			$\frac{(1+\rho)(1+\rho^2)}{\Lf^2}\,n\, \Ts^3$
			&
			\eqref{eq.mainUpperBoundsigmaalpha}\tikz[baseline=-0.9ex]\node[color=blue,mark size=0.4ex]{\pgfuseplotmark{otimes}};
			\\
			\cline{2-5}
			\mystrut{0.5 cm}{0.25 cm}
			&
			$J^\star_{\max}$
		&
		$\le$
		&
		$\frac{1}{\Lf^2} \, n \kappa \, (\kappa \,+\, 1) \,\Ts^{-1}$
		&
		\eqref{eq.thm.mainResultHBLikeJmaxKappaSquareGradNoise}\tikz[baseline=-0.9ex]\node[color=dred,mark size=0.3ex]{\pgfuseplotmark{square*}};
		\\
		\mystrut{0.55 cm}{0.35 cm}
		&$J^\star_{\max}$&$\ge$&
		$\frac{1}{\Lf^2}
		\!
		\left(
		\frac{1}{4}(n\,-\,1) 
		\kappa^2
		\,+\,
		\max\left\{ {\kappa^2}/{\Ts^3}, {1}/{4} \right\}
		\right)\Ts^{-1}$
		&
		\eqref{eq.thmLowerJMax_withAlpha}\tikz[baseline=-0.9ex]\node[color=red,mark size=0.4ex]{\pgfuseplotmark{triangle*}};
		\\
		\cline{2-5} 
		\mystrut{0.6 cm}{0.3 cm}
		&
		$J^{\star}_{\min}$
	&$\le$& 
	$\frac{2}{\Lf^2} \, \kappa \, (\kappa \,+\, 4n \,-\,7) \, \Ts^{-1}$
	&
	\eqref{eq.thm.mainResultHBLikeJminKappaSquareGradNoise}\tikz[baseline=-0.9ex]\node[color=dred,mark size=0.3ex]{\pgfuseplotmark{square}};
	\\
	\mystrut{0.6 cm}{0.4 cm}
	&$J^{\star}_{\min}$&$\ge$ & 
	$ \frac{1}{\Lf^2}
	\!
	\left(
	\frac{1}{4}\kappa^2
	\,+\,
	(n\,-\,1) 
	\max\left\{ {\kappa^2}/{\Ts^3}, {1}/{4} \right\}
	\right)\Ts^{-1}$
	&
	\eqref{eq.thmLowerJMin_withAlpha}\tikz[baseline=-0.9ex]\node[color=red,mark size=0.4ex]{\pgfuseplotmark{triangle}};
	\\
	\hline
\end{tabular}
&\hspace{-0.7 cm}
\begin{tabular}{c}
	\vspace{0.1 cm}
	\\
	\begin{tabular}{c}
		\resizebox{5.8cm}{!}{
			\begin{tikzpicture}
				\node[] (pic) at (0,0) {\includegraphics[]{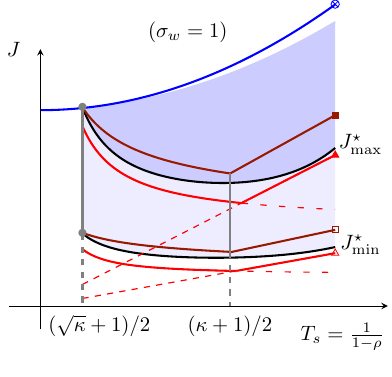}};
			\end{tikzpicture}
		}
	\end{tabular}
	\\
	\begin{tabular}{c}
		\resizebox{5.8cm}{!}{
			\begin{tikzpicture}
				\node[] (pic) at (0,0) {\includegraphics[]{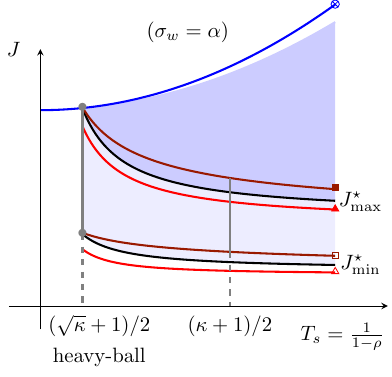}};
			\end{tikzpicture}
		}
	\end{tabular}
\end{tabular}
\end{tabular}	
\caption{Summary of the results established in Theorems~\ref{thm.mainUpperbound}-\ref{thm.mainResultHBLike} for $\sigma^2=1$. The top and bottom rows correspond to the iterate and gradient noise models, respectively, and they illustrate (i)	$
	J^\star_{\max} \DefinedAs \min_{\alpha,\beta,\gamma} \max_{f} J
	$ 
	and
	$
	J^\star_{\min} \DefinedAs \min_{\alpha,\beta,\gamma} \min_{f} J
	$
	subject to a settling time $\Ts$ for $f \in \mathcal{Q}_{\mf}^{\Lf}$ (black curves); and (ii) their corresponding upper (maroon curves) and lower (red curves) bounds in terms of the condition number $ \kappa=\Lf/\mf$, problem size $n$, and settling time $\Ts$. The upper bounds on $J$ established in Theorem~\ref{thm.mainUpperbound} are marked by blue curves. The dark shaded region and its union with the light shaded region respectively correspond to all possible pairs $(\Ts, \max_{f} J)$ and $(\Ts, \min_{f} J)$ for $f \in \mathcal{Q}_{\mf}^{\Lf}$ and any stabilizing parameters ($\alpha,\beta,\gamma$).}
\label{fig.summary}
\end{figure*}
	
	\section{Convergence and noise amplification: geometric characterization}
	\label{sec.modalDecomp}
	
	In this section, we examine the relation between the convergence rate and noise amplification of the two-step momentum algorithm~\eqref{alg.TM_disc} for strongly convex quadratic problems. In particular, the eigenvalue decomposition of the Hessian matrix $Q$ allows us to bring the dynamics into $n$ decoupled second-order systems parameterized by the eigenvalues of $Q$ and the algorithmic parameters ($\alpha,\beta,\gamma$). We utilize the Jury stability criterion to provide a novel geometric characterization of stability and $\rho$-linear convergence and exploit this insight to derive alternative proofs of standard convergence results and quantify fundamental performance tradeoffs. 
	
	\vspace*{-1ex}
	\subsection{Modal decomposition}
	\label{sec.md}
	
	We utilize the eigenvalue decomposition of the Hessian matrix $Q = Q^T \succ 0$, $Q=V\Lambda V^T$, where $\Lambda$ is the diagonal matrix of the eigenvalues and $V$ is the orthogonal matrix of the corresponding eigenvectors. The change of variables $\hat{x}^t \DefinedAs V^T ( x^t - x^\star)$ and $\hat{w}^t \DefinedAs V^T w^t$ allows us to bring~\eqref{eq.two-stepMomentumSS} into $n$ decoupled second-order subsystems,
	\begin{subequations}
		\label{eq.statespace}
		\begin{equation}\label{eq.stateeq}
			\ba{rcl}
			\hat{\psi}_i^{t+1} 
			&\!\!\! =\!\!\!&
			\hat{A}_i \hat{\psi}_i^t
			\,+\,
			\hat{B}_i \hat{w}_i^t
			\\[0.15cm]
			\hat{z}_i^{t}
			&\!\!\! =\!\!\!&
			\hat{C}_i \hat{\psi}_i^t
			\ea
		\end{equation}
		where $\hat{w}_i^{t}$ is the $i$th component of the vector $\hat{w}^t \in \bbR^n$, $\hat{\psi}_i^t = \obt{\hat{x}_i^{t}}{\hat{x}_i^{t+1}}^T$, 
		$\hat{B}_i
		=
		\obt{0}{\sigma_w}^T
		$,
		$
		\hat{C}_i 
		=
		\obt{1}{0}
		$, 
		\beq
		\ba{rcl}
		\hat{A}_i
		&\!\!\! = \!\!\!&
		\hat{A}(\lambda_i)
		\;\DefinedAs\;
		\tbt{0}{1}
		{-a(\lambda_i)}
		{-b(\lambda_i) }
		\ea
		\eeq
		and
		\begin{align}
			a(\lambda)
			\, \DefinedAs \,
			\beta - \gamma \alpha \lambda,
			~
			b(\lambda)
			\, \DefinedAs \,
			(1+\gamma) \alpha \lambda - (1+\beta).
			\label{eq.ab}
		\end{align}
	\end{subequations}
	
	\vspace*{-2ex}
	\subsection{Conditions for linear convergence}
	
	For the class of strongly convex quadratic functions $\mathcal{Q}_{\mf}^{\Lf}$, the best convergence rate $\rho$ is determined by the largest spectral radius of the matrices $\hat{A}(\lambda)$ in~\eqref{eq.statespace} for $\lambda\in[\mf,\Lf]$,
	\beq
	\rho
	\; = \;
	\max_{\lambda \,\in\,[\mf,\Lf]} ~ \spec(\hat{A}(\lambda)).
	\label{eq.rateDecompos}
	\eeq
	For the heavy-ball and Nesterov's accelerated methods, analytical expressions for $\spec(\hat{A}(\lambda))$ were developed and algorithmic parameters that optimize convergences rate were obtained in~\cite{lesrecpac16}. Unfortunately, these expressions do not provide insight into the relation between convergence rates and noise amplification.
	In this paper, we ask the dual question: 
	\bi
	\item {\em For a fixed convergence rate\/} $\rho$, {\em what is the largest condition number\/} $\kappa$ {\em that can be handled by the two-step momentum algorithm\/}~\eqref{alg.TM_disc} {\em with constant parameters?\/} 
	\ei
	We note that the matrices $\hat{A}(\lambda)$ share the same structure as 
	\begin{subequations}
		\begin{equation}
			\label{eq.MatrixM}
			M 
			\, = \,
			\tbt{0}{1}{-a}{-b}
		\end{equation}
		with the real scalars $a$ and $b$ and that the characteristic polynomial associated with the matrix $M$ is given by
		\beq
		F(z)
		\; \DefinedAs \;
		\det \, (zI \, - \, M)
		\; = \;
		z^2 \, + \, b \, z \, + \, a.  
		\label{eq.F(z)}
		\eeq
	\end{subequations}
	We next utilize the Jury stability criterion~\cite[Chap. 4-3]{oga94} to provide conditions for stability of the matrix $M$ given by~\eqref{eq.MatrixM}. 
	
	\vsp
	\begin{mylem}\label{lem.stability}\begin{subequations}
			For the matrix $M \in \bbR^{2 \times 2}$ given by~\eqref{eq.MatrixM}, 
			\begin{align}\label{eq.stabilityCond}
				\spec(M)\,<\,1  
				\;
				\iff 
				\;
				(b,a)
				\,\in\,
				\Delta
			\end{align}
			where the stability set 
			\begin{align}\label{eq.Delta}
				\Delta
				\;\DefinedAs\;	
				\left\{ 
				(b,a) 
				\; | \;
				|b| \, - \, 1 \, < \, a \, < \, 1
				\right\}
			\end{align}
			is an open triangle in the ($b,a$)-plane with vertices
			\begin{align}\label{eq.VerticesDelta}
				X\,=\,(-2,1),\quad Y\,=\,(2,1),\quad Z\,=\,(0,-1).
			\end{align}
		\end{subequations}
	\end{mylem}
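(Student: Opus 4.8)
The plan is to reduce the spectral-radius condition on $M$ to a condition on the roots of its characteristic polynomial $F(z) = z^2 + bz + a$ from~\eqref{eq.F(z)} and then apply the Jury stability test for monic quadratics. Since the eigenvalues of $M$ are exactly the roots of $F$, we have $\spec(M) < 1$ if and only if both roots of $F$ lie strictly inside the open unit disk.

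First I would record the Jury conditions for the degree-two monic polynomial $F$: both roots lie in the open unit disk if and only if $F(1) > 0$, $F(-1) > 0$, and $|F(0)| < 1$. In terms of the coefficients these read $1 + b + a > 0$, $1 - b + a > 0$, and $|a| < 1$. The first two say $a > -(1+b)$ and $a > b - 1$, i.e. $a > \max\{\,b - 1,\, -b - 1\,\} = |b| - 1$; since $|b| - 1 \ge -1$, the lower bound $a > -1$ contained in $|a| < 1$ is automatically implied, so the three conditions collapse to $|b| - 1 < a < 1$, which is precisely membership in the set $\Delta$ of~\eqref{eq.Delta}. This gives the equivalence~\eqref{eq.stabilityCond}.

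Next I would identify $\Delta$ geometrically. It is the intersection of the three open half-planes $\{a < 1\}$, $\{a > b - 1\}$, and $\{a > -b - 1\}$; the three bounding lines $a = 1$, $a = b-1$, $a = -b-1$ intersect pairwise at $(b,a) = (-2,1)$, $(2,1)$, and $(0,-1)$. Since $\Delta$ is nonempty (e.g. $(0,0)\in\Delta$) and bounded, it is the open triangle with vertices $X$, $Y$, $Z$ as in~\eqref{eq.VerticesDelta}.

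The computation itself is routine; the only point requiring care is the precise statement of the Jury test --- that the inequalities are strict (so that roots on the unit circle are excluded) and that for degree two these three inequalities are both necessary and sufficient. As an independent check that also avoids invoking the criterion, one can argue directly by cases on the discriminant: if $b^2 < 4a$ the roots are complex conjugates with common modulus $\sqrt{a}$, so $\spec(M) < 1 \iff a < 1$, and from $b^2 < 4a \le (1+a)^2$ (the last since $(1-a)^2 \ge 0$) one gets $|b| < 1 + a$, i.e. $(b,a)\in\Delta$; if $b^2 \ge 4a$ the roots are real, and both lie in $(-1,1)$ precisely when $F(1) > 0$, $F(-1) > 0$, and $|b| < 2$, which together with $a \le b^2/4 < 1$ is again equivalent to $(b,a)\in\Delta$. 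Either route reproduces the stated characterization.
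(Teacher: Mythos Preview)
Your proof is correct and follows essentially the same route as the paper: both invoke the Jury stability criterion for the characteristic polynomial $F(z)=z^2+bz+a$, obtain the conditions $|a|<1$ and $F(\pm 1)>0$, and observe that $a>-1$ is redundant given $F(\pm 1)>0$. Your write-up is more detailed than the paper's in that you explicitly verify the triangular geometry of $\Delta$ and supply an independent discriminant-based check, but the core argument is the same.
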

	
	\begin{proof}
		The characteristic polynomial $F(z)$ associated with the matrix $M$ is given by~\eqref{eq.F(z)} and the Jury stability criterion~\cite[Chap. 4-3]{oga94} provides necessary and sufficient conditions for stability,
		$
		|a|  <  1
		$,
		$
		F (\pm1)
		=
		1  \pm  b + a
		 >  0.
		$
		The condition $a>-1$ is ensured by the positivity of $F (\pm 1)$.
	\end{proof}
	
	\vsp
	
	For any $\rho > 0$, the spectral radius $\spec(M)$ of the matrix $M$ is smaller than $\rho$ if and only if $\spec(M/\rho)$ is smaller than $1$. This observation in conjunction with Lemma~\ref{lem.stability} allow us to obtain necessary and sufficient conditions for stability with the linear convergence rate $\rho$ of the two-step momentum algorithm~\eqref{alg.TM_disc}.
	
	\vsp
	\begin{mylem}\label{lem.expstability}\begin{subequations}
			For any positive scalar $\rho<1$ and the matrix $M \in \bbR^{2 \times 2}$ given by~\eqref{eq.MatrixM}, we have
			\beq
			\label{eq.expStability}
			\spec(M)\,\le\,\rho  
			\;\iff\;
			(b,a)
			\, \in\, \Delta_\rho
			\eeq
			where the $\rho$-linear convergence set 
			\begin{align}\label{eq.Deltarho}
				\Delta_\rho
				\;\DefinedAs\;	
				\left\{ 
				(b,a) 
				\; | \;
				\rho \, (|b| \, - \, \rho)
				\, \leq \, 
				a 
				\, \leq \, 
				\rho^2
				\right\}
			\end{align}
			is a closed triangle in the ($b,a$)-plane with vertices
			\begin{align}\label{eq.VerticesDeltaRho}
				X_\rho\,=\,(-2\rho,\rho^2),
				~
				Y_\rho\,=\,(2\rho,\rho^2),
				~
				Z_\rho\,=\,(0,-\rho^2).
			\end{align}
		\end{subequations}
	\end{mylem}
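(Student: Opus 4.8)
The plan is to reduce the claim to Lemma~\ref{lem.stability} by a rescaling, and then deal with the fact that here we need the \emph{non-strict} inequality $\spec(M)\le\rho$ rather than the strict stability condition $\spec(M)<1$. Since $\rho>0$, a complex number $z$ satisfies $|z|\le\rho$ if and only if $|z/\rho|\le1$, so $\spec(M)\le\rho$ holds exactly when every root of the rescaled polynomial $\rho^{-2}F(\rho z)=z^2+(b/\rho)\,z+(a/\rho^2)$ lies in the closed unit disk. Setting $\hat b\DefinedAs b/\rho$ and $\hat a\DefinedAs a/\rho^2$, the map $(b,a)\mapsto(\hat b,\hat a)$ is a linear homeomorphism of the $(b,a)$-plane that carries the triangle $\Delta_\rho$ with vertices $X_\rho,Y_\rho,Z_\rho$ onto the triangle $\Delta$ with vertices $X,Y,Z$, and it transforms $M$ into the companion matrix $\hat M$ of $z^2+\hat b z+\hat a$ with $\spec(M)=\rho\,\spec(\hat M)$. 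Hence it suffices to establish the closed-disk analogue of Lemma~\ref{lem.stability}: $\spec(\hat M)\le1$ if and only if $(\hat b,\hat a)\in\overline{\Delta}=\{(\hat b,\hat a)\,|\,|\hat b|-1\le\hat a\le1\}$, the closure of the open triangle $\Delta$.

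For this closed-disk analogue I would proceed as follows. If $(\hat b,\hat a)$ lies in the open triangle $\Delta$, Lemma~\ref{lem.stability} already gives $\spec(\hat M)<1$, so only the three boundary edges remain for the ``$\Leftarrow$'' direction. On the edge $\hat a=1$, $|\hat b|\le2$, the product of the two roots is $\hat a=1$, so they form either a conjugate pair of modulus $1$ or, at the endpoints $\hat b=\pm2$, a double root at $\mp1$; in all cases $\spec(\hat M)=1$. On the edge $\hat a=\hat b-1$, $\hat b\in[0,2]$, one computes $F(-1)=1-\hat b+\hat a=0$, so $-1$ is a root and the other root equals $1-\hat b\in[-1,1]$; the edge $\hat a=-\hat b-1$, $\hat b\in[-2,0]$, is handled symmetrically using $F(1)=0$. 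For the ``$\Rightarrow$'' direction I would use the contrapositive: if $(\hat b,\hat a)\notin\overline{\Delta}$ then either $\hat a>1$, whence $|z_1||z_2|=\hat a>1$ forces a root of modulus exceeding $1$; or $\hat a<|\hat b|-1$, in which case (taking $\hat b\ge0$, the case $\hat b<0$ following by the substitution $z\mapsto-z$, which leaves $\overline{\Delta}$ and all moduli invariant) we have $F(-1)=1-\hat b+\hat a<0$ while $F(z)\to+\infty$ as $z\to-\infty$, so $F$ has a real root in $(-\infty,-1)$ and $\spec(\hat M)>1$. Undoing the rescaling then yields the stated equivalence, and the vertices $X_\rho,Y_\rho,Z_\rho$ of $\Delta_\rho$ are the images of $X,Y,Z$.

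The one place that needs genuine care, and the reason the result does not follow verbatim from Lemma~\ref{lem.stability}, is the non-strict boundary: one must correctly locate the eigenvalues that sit exactly on the circle of radius $\rho$, including the degenerate double-root configurations at the vertices $X_\rho$ and $Y_\rho$ (where the argument based on the discriminant versus the one based on $F(\pm\rho)$ must be matched up) and the simple boundary roots $\pm\rho$ on the slanted edges. Everything else is an elementary rescaling together with a handful of sign evaluations of the quadratic $F$ at $\pm1$ and a limit at $\pm\infty$.
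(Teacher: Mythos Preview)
Your proposal is correct and follows essentially the same route as the paper: rescale $M$ by $1/\rho$ so that $\spec(M)\le\rho\iff\spec(M/\rho)\le1$, observe that the characteristic polynomial transforms to $z^2+(b/\rho)z+a/\rho^2$, and then invoke the closed version of the Jury/Lemma~\ref{lem.stability} triangle, which maps back to $\Delta_\rho$. The paper states this in two lines, simply asserting that ``similar arguments'' to Lemma~\ref{lem.stability} give $\spec(M/\rho)\le1\iff(b/\rho,a/\rho^2)\in\overline{\Delta}$; you have (correctly) spelled out the boundary-edge and contrapositive verifications that the paper leaves implicit.
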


	\begin{proof}
		See Appendix~\ref{app.ModalDecomp}.
	\end{proof}
	\begin{figure}[h]
		\begin{center}
			\begin{tabular}{c}
				\resizebox{6.3cm}{!}{
					\begin{tikzpicture}
						\node[] (pic) at (0,0) {\includegraphics[]{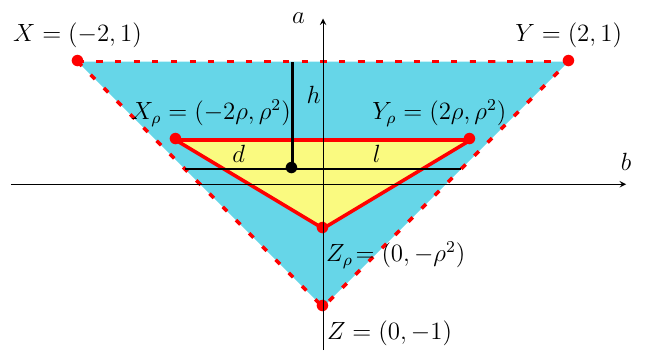}};
					\end{tikzpicture}
				}
			\end{tabular}
		\end{center}
		\vspace*{-0.5cm}
		\caption{The stability set $\Delta$ (the open, cyan triangle) in~\eqref{eq.Delta}  and the $\rho$-linear convergence set $\Delta_\rho$ (the closed, yellow triangle) in~\eqref{eq.Deltarho} along with the corresponding vertices. For $(b,a)$ (black dot) associated with  the matrix $M$ in~\eqref{eq.MatrixM}, the distances  $(d,h,l)$ in~\eqref{eq.dhlDefinition} are marked by black lines.}
		\vspace*{-0.45cm}
		\label{fig.StabilityExpStability-disc}
	\end{figure}
		
	Figure~\ref{fig.StabilityExpStability-disc} illustrates the stability and the $\rho$-linear convergence sets $\Delta$ and $\Delta_\rho$. We note that for any $\rho \in (0,1)$, we have $\Delta_\rho\subset \Delta$. This can be verified by observing that the vertices $(X_\rho,Y_\rho,Z_\rho)$ of $\Delta_\rho$ all lie in $\Delta$.

	\begin{myrem}
			The characterization of the sets $\Delta$ and $\Delta_\rho$ in both continuous and discrete-time settings along with extensions to higher-order systems has been discussed in the literature; e.g., see~\cite{fammed78,blogurmegove12}. In particular, it has been shown that $\Delta_\rho$ is the convex hull of the coefficients associated with the special polynomials $(z-\rho)^2$, $(z+\rho)^2$, and $(z-\rho)(z+\rho)$.
		\end{myrem}
	
	For the two-step momentum algorithm~\eqref{alg.TM_disc}, the functions $a (\lambda)$ and $b (\lambda)$ given by~\eqref{eq.ab} satisfy the affine relation,
	\beq
	(1 \, + \, \gamma)\, a (\lambda) \; + \; \gamma\, b (\lambda)
	\; = \; 
	\beta \, - \, \gamma.
	\label{eq.ab-affine}
	\eeq
	This fact in conjunction with Lemmas~\ref{lem.stability} and~\ref{lem.expstability} allow us to derive conditions for stability and the convergence rate.
	 A similar approach for polynomials of arbitrary degree has been taken in~\cite{blogurmegove12}, where the authors analyzed affine constraints on the coefficients in optimizing the convergence rate. For second order polynomials, we note that the rate of convergence can also be directly characterized as a function of $(a(\lambda),b(\lambda))$. This approach was utilized in~\cite[Lemma~3.1]{cangur22}.
	
	\vsp
	\begin{mylem}\label{lem.expstabilityAlgorithm}
		The two-step momentum algorithm~\eqref{alg.TM_disc} with constant parameters $(\alpha,\beta,\gamma)$ is stable for all functions $ f\in\mathcal{Q}_{\mf}^{\Lf}$ if and only if the following equivalent conditions hold:
		\begin{enumerate}
			\item 
			$
			(b(\lambda),a(\lambda))
			\in 
			\Delta
			$  
			for all
			$
			\lambda \in [\mf,\Lf];
			$
			\item 
			$
			(b(\lambda),a(\lambda))
			\in 
			\Delta
			$  
			for 
			$
			\lambda \in \{ \mf, \Lf \}.
			$
		\end{enumerate}
		Furthermore, the linear convergence rate $\rho<1$ is achieved for all functions $ f\in\mathcal{Q}_{\mf}^{\Lf}$ if and only if the following equivalent conditions hold:
		\begin{enumerate}
			\item 
			$
			(b(\lambda),a(\lambda))
			\in
			\Delta_\rho
			$  
			for all
			$
			\lambda \in [\mf,\Lf];
			$
			\item 
			$
			(b(\lambda),a(\lambda))
			\in
			\Delta_\rho
			$  
			for 
			$
			\lambda \in \{\mf,\Lf\}.
			$
		\end{enumerate}
		Here, $(b(\lambda),a(\lambda))$ is given by~\eqref{eq.ab}, and the stability and $\rho$-linear convergence triangles $\Delta$ and $\Delta_\rho$ are given by~\eqref{eq.Delta} and~\eqref{eq.Deltarho}, respectively.
	\end{mylem}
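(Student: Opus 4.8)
The plan is to reduce the lemma, through the modal decomposition of Section~\ref{sec.md}, to the two spectral characterizations in Lemmas~\ref{lem.stability} and~\ref{lem.expstability} together with the convexity of the triangles $\Delta$ and $\Delta_\rho$. First I would recall from~\eqref{eq.statespace} that for a fixed $f\in\mathcal{Q}_{\mf}^{\Lf}$ with Hessian eigenvalues $\lambda_1=\Lf\ge\cdots\ge\lambda_n=\mf$ the error dynamics of~\eqref{alg.TM_disc} decouple into $n$ subsystems whose state matrices $\hat{A}(\lambda_i)$ all have the companion form~\eqref{eq.MatrixM} with $(b,a)=(b(\lambda_i),a(\lambda_i))$. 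Hence this $f$ is stable iff $\spec(\hat{A}(\lambda_i))<1$ for every $i$ and, by~\eqref{eq.rateDecompos}, admits the rate $\rho$ iff $\spec(\hat{A}(\lambda_i))\le\rho$ for every $i$. Applying Lemma~\ref{lem.stability} (resp.\ Lemma~\ref{lem.expstability}) to each subsystem, stability of this $f$ is equivalent to $(b(\lambda_i),a(\lambda_i))\in\Delta$ for all $i$, and rate-$\rho$ convergence of this $f$ is equivalent to $(b(\lambda_i),a(\lambda_i))\in\Delta_\rho$ for all $i$. Since every admissible Hessian has $\lambda_1=\Lf$, $\lambda_n=\mf$ and all its eigenvalues in $[\mf,\Lf]$, ``stable for all $f\in\mathcal{Q}_{\mf}^{\Lf}$'' is implied by item~1 (all eigenvalues sit in $[\mf,\Lf]$) and it implies item~2 (the values $\mf$ and $\Lf$ appear in every admissible spectrum); the identical reasoning with $\Delta$ replaced by $\Delta_\rho$ handles the convergence-rate claim.

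Next I would close the loop by proving item~1 $\Leftrightarrow$ item~2. The key point is that $a(\lambda)$ and $b(\lambda)$ in~\eqref{eq.ab} are affine in $\lambda$, so as $\lambda$ ranges over $[\mf,\Lf]$ the point $(b(\lambda),a(\lambda))$ traces the line segment joining $(b(\mf),a(\mf))$ to $(b(\Lf),a(\Lf))$ --- a segment that, consistent with~\eqref{eq.ab-affine}, lies on the line $(1+\gamma)\,a+\gamma\,b=\beta-\gamma$. The sets $\Delta$ in~\eqref{eq.Delta} and $\Delta_\rho$ in~\eqref{eq.Deltarho} are triangles, hence convex, so such a segment is contained in either of them iff both of its endpoints are. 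Therefore $(b(\lambda),a(\lambda))\in\Delta$ (resp.\ $\Delta_\rho$) for all $\lambda\in[\mf,\Lf]$ exactly when it holds for $\lambda\in\{\mf,\Lf\}$, which is item~1 $\Leftrightarrow$ item~2 for each of the two assertions; combined with the previous paragraph this yields the stated equivalences.

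I do not expect a genuine obstacle: the argument is ``decouple, apply the two preceding lemmas, invoke convexity of a triangle.'' The only points needing care are bookkeeping ones. First, one should read ``linear convergence rate $\rho$'' through~\eqref{eq.rateDecompos} as the spectral statement $\max_{\lambda\in[\mf,\Lf]}\spec(\hat{A}(\lambda))\le\rho$, so that Lemma~\ref{lem.expstability} --- which characterizes $\spec(\hat{A}(\lambda))\le\rho$ rather than a bound of the literal form~\eqref{eq.linearConvergenceFormal} --- applies verbatim; a defective eigenvalue on the circle of radius $\rho$ merely introduces a polynomial prefactor absorbed by the standard argument. Second, for $n\le 2$ only the endpoints $\mf,\Lf$ are realized by admissible Hessians, so ``stable for all $f$'' coincides with item~2 at the outset, and together with the convexity step the three conditions still agree in every dimension.
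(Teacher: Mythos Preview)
Your proposal is correct and follows essentially the same approach as the paper: apply Lemmas~\ref{lem.stability} and~\ref{lem.expstability} through the modal decomposition~\eqref{eq.rateDecompos} to obtain condition~1, then use the convexity of the triangles $\Delta$, $\Delta_\rho$ together with the affine dependence of $(b(\lambda),a(\lambda))$ on $\lambda$ to pass to the endpoint condition~2. The paper's proof is simply a terser statement of exactly these two steps.
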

	
	\vsp
	
	\begin{proof}
		The conditions in 1) follow from combing~\eqref{eq.rateDecompos} with Lemma~\ref{lem.stability} (for stability) and Lemma~\ref{lem.expstability} (for $\rho$-linear convergence). The conditions in 2) follow from the facts that $\Delta$ and $\Delta_\rho$ are convex sets and that $(b(\lambda),a(\lambda))$ is a line segment with end points corresponding to $\lambda=\mf$ and $\lambda=\Lf$.
	\end{proof}
	
	\vsp
	
	Lemma~\ref{lem.expstabilityAlgorithm} exploits the affine relation~\eqref{eq.ab-affine} between $a(\lambda)$ and $b (\lambda)$ and the convexity of the sets $\Delta$ and $\Delta_\rho$ to establish necessary and sufficient conditions for stability and $\rho$-linear convergence: {\em the inclusion of the end points of the line segment\/} $(b(\lambda),a(\lambda))$ {\em associated with the extreme eigenvalues $\mf$ and $\Lf$ of the matrix $Q$ in the corresponding triangle.\/} A similar approach was taken in~\cite[Appendix A.1]{vanles21}, where the affine nature of the conditions resulting from the Jury stability criterion with respect to $\lambda$ was used to conclude that $\spec(\hat{A}(\lambda))$ is a quasi-convex function of $\lambda$ and show that the extreme points $\mf$ and $\Lf$ determine $\spec(A)$. In contrast, we exploit the triangular shapes of the stability and $\rho$-linear convergence sets $\Delta$ and $\Delta_\rho$ and utilize this geometric insight to identify the parameters that optimize the convergence rate and to establish tradeoffs between noise amplification and convergence rate. 
	
	\begin{figure*}[t]
		\centering
		\begin{tabular}{cccc}
			\begin{tabular}{c}
				\resizebox{4.2 cm}{!}{
					\begin{tikzpicture}
						\node[] (pic) at (0,0) {\includegraphics[]{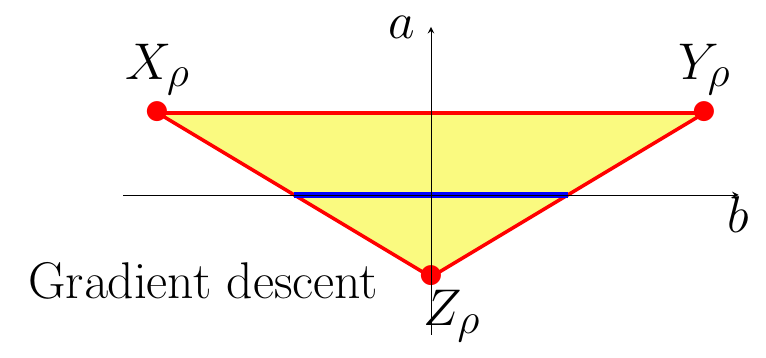}};
					\end{tikzpicture}
				}
			\end{tabular}
			&\hspace{-.8cm}
			\begin{tabular}{c}
				\resizebox{4.2 cm}{!}{
					\begin{tikzpicture}
						\node[] (pic) at (0,0) {\includegraphics[]{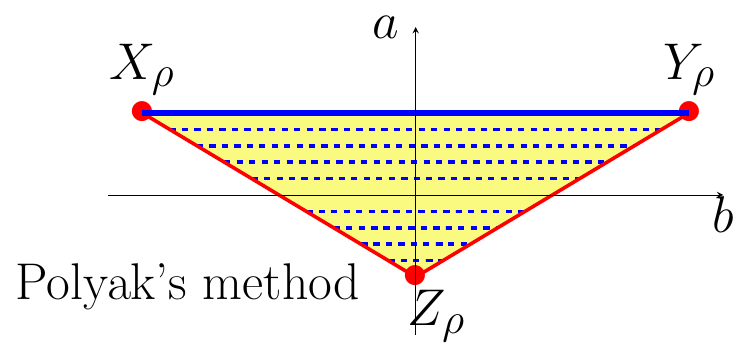}};
					\end{tikzpicture}
				}
			\end{tabular}
			&\hspace{-.8cm}
			\begin{tabular}{c}
				\resizebox{4.2 cm}{!}{
					\begin{tikzpicture}
						\node[] (pic) at (0,0) {\includegraphics[]{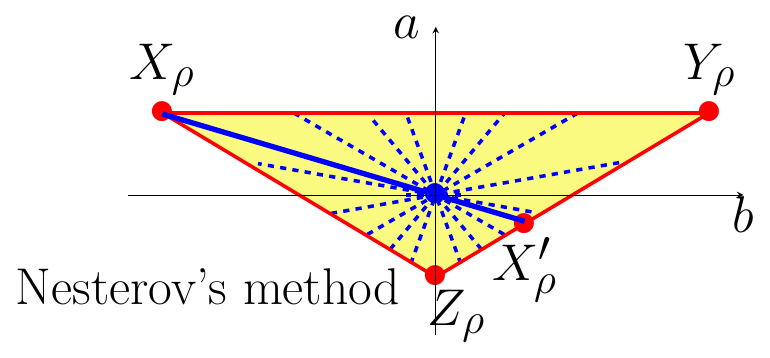}};
					\end{tikzpicture}
				}
			\end{tabular}
		\end{tabular}
		\vspace*{-0.3cm}
		\caption{For a fixed $\rho$-linear convergence triangle $\Delta_\rho$ (yellow), dashed blue lines mark the line segments $(b(\lambda),a(\lambda))$ with $\lambda\in[\mf,\Lf]$ for gradient descent, Polyak's heavy-ball, and Nesterov's accelerated methods as particular instances of the two-step momentum algorithm~\eqref{alg.TM_disc} with constant parameters. The solid blue line segments correspond to the parameters for which the algorithm achieves rate $\rho$ for the largest possible condition number given by~\eqref{eq.optimalRatesAltProof}.}
		\vspace*{-0.5cm}
		\label{fig.GDNAHB-structure}
	\end{figure*}
	The following corollary is immediate.
	
	\vsp
	
	\begin{mycor}\label{cor.onerateforall}
		Let the two-step momentum algorithm~\eqref{alg.TM_disc} with constant parameters $(\alpha,\beta,\gamma)$ minimize a function $f\in\mathcal{Q}_{\mf}^{\Lf}$ with a linear rate $\rho<1$. Then, the convergence rate $\rho$ is achieved for all functions $f \in \mathcal{Q}_{\mf}^{\Lf}$.
	\end{mycor}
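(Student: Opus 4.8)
The plan is to deduce the statement directly from the endpoint characterization of $\rho$-linear convergence in Lemma~\ref{lem.expstabilityAlgorithm}. The underlying observation is that, by the way the class $\mathcal{Q}_{\mf}^{\Lf}$ is defined, the Hessian $Q$ of \emph{every} $f\in\mathcal{Q}_{\mf}^{\Lf}$ has $\mf$ and $\Lf$ among its eigenvalues; hence, whichever single function the algorithm is assumed to minimize with rate $\rho$, its spectrum already contains the two extreme eigenvalues that govern convergence over the whole class, and the geometric conditions of Lemma~\ref{lem.expstabilityAlgorithm} do the rest.

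First I would record that, for a fixed $f\in\mathcal{Q}_{\mf}^{\Lf}$ with Hessian $Q$, linear convergence of~\eqref{alg.TM_disc} with rate $\rho<1$ forces $\spec(A)\le\rho$ for the corresponding LTI matrix $A$ in~\eqref{eq.two-stepMomentumSS}: if $\spec(A)>\rho$, then some mode of $\psi^t=A^t\psi^0$ grows like $\spec(A)^t$, contradicting~\eqref{eq.linearConvergenceFormal}. By the modal decomposition of Section~\ref{sec.md} (see also~\eqref{eq.rateDecompos}), $A$ is, after a change of variables, block-diagonal with the $2\times2$ blocks $\hat A(\lambda_i)$, so $\spec(A)=\max_i\spec(\hat A(\lambda_i))$ and therefore $\spec(\hat A(\lambda))\le\rho$ for every eigenvalue $\lambda$ of $Q$; in particular $\spec(\hat A(\mf))\le\rho$ and $\spec(\hat A(\Lf))\le\rho$.

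Next, applying Lemma~\ref{lem.expstability} to the matrices $M=\hat A(\mf)$ and $M=\hat A(\Lf)$, whose entries are $a(\lambda),b(\lambda)$ from~\eqref{eq.ab}, these two bounds translate into $(b(\mf),a(\mf))\in\Delta_\rho$ and $(b(\Lf),a(\Lf))\in\Delta_\rho$, with $\Delta_\rho$ as in~\eqref{eq.Deltarho}. Invoking the second equivalent condition for $\rho$-linear convergence in Lemma~\ref{lem.expstabilityAlgorithm} --- namely that membership of the endpoints of the segment $(b(\lambda),a(\lambda))$, $\lambda\in[\mf,\Lf]$, in the convex set $\Delta_\rho$ is sufficient --- then yields that rate $\rho$ is achieved for all $f\in\mathcal{Q}_{\mf}^{\Lf}$, which is the claim. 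I do not anticipate a genuine obstacle; the only points deserving care are the interpretation of ``converges with rate $\rho$'' (that it entails $\spec(A)\le\rho$, any transient polynomial growth being absorbed into the constant $c$), and the observation that the convexity/endpoint mechanism of Lemma~\ref{lem.expstabilityAlgorithm} is precisely what upgrades ``$(b,a)\in\Delta_\rho$ at $\mf$ and $\Lf$'' to ``$(b,a)\in\Delta_\rho$ on all of $[\mf,\Lf]$''.
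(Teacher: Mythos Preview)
Your proposal is correct and follows essentially the same approach as the paper: both rely on Lemma~\ref{lem.expstabilityAlgorithm} and the fact that every $f\in\mathcal{Q}_{\mf}^{\Lf}$ has $\mf$ and $\Lf$ as extreme eigenvalues of its Hessian. The paper's proof is just a terser two-sentence version of your argument, citing Lemma~\ref{lem.expstabilityAlgorithm} directly rather than unpacking the route through Lemma~\ref{lem.expstability} and the endpoint condition.
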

	\vsp
	\begin{proof}
		Lemma~\ref{lem.expstabilityAlgorithm} implies that only the extreme eigenvalues $\mf$ and $\Lf$ of $Q$ determine $\rho$. Since all functions $f \in \mathcal{Q}_{\mf}^{\Lf}$ share the same extreme eigenvalues, this completes the proof. 
	\end{proof}
	
	\vsp
	
	For the two-step momentum algorithm~\eqref{alg.TM_disc} with constant parameters, Lemma~\ref{lem.expstabilityAlgorithm} leads to a a simple alternative proof for the fundamental lower bound~\eqref{eq.NesFunLow} on the settling time established by Nesterov. Our proof utilizes the fact that for any point $(b(\lambda),a(\lambda)) \in \Delta_\rho$, the horizontal signed distance to the edge $XZ$ of the stability triangle $\Delta$ satisfies
	\begin{align}\label{eq.dDef}
		d(\lambda)
		\;\DefinedAs\;
		a(\lambda)\,+\,b(\lambda)\,+\,1
		\;=\;
		\alpha\lambda.
	\end{align}
	where $a$ and $b$ are given by~\eqref{eq.ab}; see Figure~\ref{fig.StabilityExpStability-disc}.
	
	\begin{myprop}\label{prop.recoverNesLowerbound}
		Let the two-step momentum algorithm~\eqref{alg.TM_disc} with constant parameters $(\alpha,\beta,\gamma)$ achieve the convergence rate $\rho<1$ for all functions $f\in\mathcal{Q}_{\mf}^{\Lf}$. Then, lower bound~\eqref{eq.NesFunLow} on the settling time holds and it is achieved by the heavy-ball method with the parameters provided in Table~\ref{tab:rates}.
		\end{myprop}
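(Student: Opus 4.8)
The plan is to read the bound straight off the geometry of the $\rho$-linear convergence triangle $\Delta_\rho$ and the horizontal-distance identity~\eqref{eq.dDef}. First I would invoke Lemma~\ref{lem.expstabilityAlgorithm}: since the two-step momentum algorithm achieves rate $\rho<1$ for every $f\in\mathcal{Q}_{\mf}^{\Lf}$, the two endpoints $(b(\mf),a(\mf))$ and $(b(\Lf),a(\Lf))$ of the segment $\lambda\mapsto(b(\lambda),a(\lambda))$ both lie in $\Delta_\rho$. By~\eqref{eq.dDef}, along this segment the affine quantity $d(\lambda)=a(\lambda)+b(\lambda)+1$ equals $\alpha\lambda$, so everything reduces to bounding the range of $d$ over $\Delta_\rho$.

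Next, because $d$ is affine in $(b,a)$ and $\Delta_\rho$ is the convex hull of its three vertices $X_\rho,Y_\rho,Z_\rho$ in~\eqref{eq.VerticesDeltaRho}, its extrema over $\Delta_\rho$ occur at those vertices. Evaluating gives $d(X_\rho)=(1-\rho)^2$, $d(Y_\rho)=(1+\rho)^2$, $d(Z_\rho)=1-\rho^2$, hence $\min_{\Delta_\rho}d=(1-\rho)^2$ and $\max_{\Delta_\rho}d=(1+\rho)^2$. Applied to the endpoints this yields $\alpha\mf\ge(1-\rho)^2>0$ and $\alpha\Lf\le(1+\rho)^2$; in particular $\alpha>0$, so $\kappa=\Lf/\mf=(\alpha\Lf)/(\alpha\mf)\le(1+\rho)^2/(1-\rho)^2$. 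Taking square roots and writing $\Ts=1/(1-\rho)$, this rearranges to $\sqrt{\kappa}\le(1+\rho)/(1-\rho)=2\Ts-1$, i.e. $\Ts\ge(\sqrt{\kappa}+1)/2$, which is~\eqref{eq.NesFunLow}.

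For tightness I would plug in the heavy-ball parameters of Table~\ref{tab:rates}, namely $\gamma=0$, $\beta=\rho^2$ with $\rho=(\sqrt{\kappa}-1)/(\sqrt{\kappa}+1)$, and $\alpha=4/(\sqrt{\Lf}+\sqrt{\mf})^2$. Since $\gamma=0$ we have $a(\lambda)\equiv\beta=\rho^2$, so the segment $(b(\lambda),a(\lambda))$ is the horizontal chord at height $\rho^2$; a short computation shows $\alpha\mf=4/(\sqrt{\kappa}+1)^2=(1-\rho)^2$ and $\alpha\Lf=4\kappa/(\sqrt{\kappa}+1)^2=(1+\rho)^2$, so by~\eqref{eq.dDef} the endpoints are $(b(\mf),a(\mf))=X_\rho$ and $(b(\Lf),a(\Lf))=Y_\rho$. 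Both are vertices of $\Delta_\rho$, so Lemma~\ref{lem.expstabilityAlgorithm} yields convergence at rate $\rho$; moreover the characteristic polynomial at these vertices is $(z\mp\rho)^2$, so $\spec(\hat{A}(\mf))=\spec(\hat{A}(\Lf))=\rho$ and the rate is exactly $\rho$, giving $\Ts=(\sqrt{\kappa}+1)/2$ and meeting the lower bound.

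There is no genuine obstacle once the triangle picture is available; the only points needing care are the (one-line, linear-programming) fact that the affine function $d$ attains its extrema at the vertices of $\Delta_\rho$, and the elementary algebra identifying the heavy-ball endpoints with $X_\rho$ and $Y_\rho$. The one subtlety worth stating explicitly is that $\alpha>0$ is not assumed but follows from $\alpha\mf\ge(1-\rho)^2>0$, which is what licenses dividing $\alpha\Lf\le(1+\rho)^2$ by $\alpha\mf\ge(1-\rho)^2$ to bound $\kappa$.
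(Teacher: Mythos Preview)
Your proof is correct and follows essentially the same route as the paper: both invoke Lemma~\ref{lem.expstabilityAlgorithm} to place the endpoints in $\Delta_\rho$, use $d(\lambda)=\alpha\lambda$, identify $\min_{\Delta_\rho}d=(1-\rho)^2$ and $\max_{\Delta_\rho}d=(1+\rho)^2$ at the vertices $X_\rho,Y_\rho$, and divide to bound $\kappa$. Your write-up is slightly more explicit in two places --- the linear-programming justification for where $d$ attains its extrema and the verification that the heavy-ball endpoints land exactly at $X_\rho,Y_\rho$ with characteristic polynomials $(z\mp\rho)^2$ --- but these are elaborations of the paper's argument rather than a different one.
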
	
	\vsp
	\begin{proof}
		Let $d(\mf)=\alpha \mf$ and $d(\Lf)=\alpha\Lf$ denote the values of the function $d(\lambda)$ associated with the points~$(b(\mf),a(\mf))$ and~$(b(\Lf),a(\Lf))$, where $(b,a)$ and $d$ are given by~\eqref{eq.ab} and~\eqref{eq.dDef}, respectively. Lemma~\ref{lem.expstabilityAlgorithm} implies that $(b(\Lf),a(\Lf))$ and $(b(\mf),a(\mf))$ lie in the $\rho$-linear convergence triangle $\Delta_\rho$. Thus, 
		\begin{align}\label{eq.dmaxdminProof}
			{d_{\max}}/{d_{\min}}
			\;\ge\;
			{d(\Lf)}/{d(\mf)}
			\;=\;
			\kappa
		\end{align}
		where $d_{\max}$ and $d_{\min}$ are the largest and smallest values of $d$ among all points $(b,a)\in\Delta_\rho$. From the shape of $\Delta_\rho$, we conclude that $d_{\max}$ and $d_{\min}$ correspond to the vertices $Y_\rho$ and $X_\rho$ of $\Delta_\rho$ given by~\eqref{eq.VerticesDeltaRho}; see Figure~\ref{fig.StabilityExpStability-disc}. Thus,
		\begin{subequations}
			\begin{align}\label{eq.Dmax}
				d_{\max}
				&\;=\;
				d_{Y_\rho}
				\;=\;
				1\,+\,\rho^2\,+\,2\,\rho\;=\; (1\,+\,\rho)^2
				\\[0.cm]
				\label{eq.Dmin}
				d_{\min}
				&\;=\;
				d_{X_\rho}
				\;=\;
				1\,+\,\rho^2\,-\,2\,\rho\;=\; (1\,-\,\rho)^2.
			\end{align}\label{eq.DmaxDmin}\end{subequations}
Combining~\eqref{eq.dmaxdminProof} with~\eqref{eq.DmaxDmin} yields
		\begin{align}\label{eq.boudnKappaGeneral}
			\kappa
			\;=\;
			\dfrac{d(\Lf)}{d(\mf)}
			\;\le\;
			\dfrac{d_{\max}}{d_{\min}}
			\;=\;
			\dfrac{( 1\,+\,\rho )^2}{( 1\,-\,\rho )^2}.
		\end{align}
		Rearranging terms in~\eqref{eq.boudnKappaGeneral} gives lower bound~\eqref{eq.NesFunLow}. 	
		\end{proof}
	
	\vsp
	
	To provide additional insight, we next examine the implications of Lemma~\ref{lem.expstabilityAlgorithm} for gradient descent, Polyak's heavy-ball, and Nesterov's accelerated algorithms. In all three cases, our dual approach recovers the optimal convergence rates provided in Table~\ref{tab:rates}. From the affine relation~\eqref{eq.ab-affine}, it follows that $(b(\lambda),a(\lambda))$ with $\lambda\in[\mf,\Lf]$ for,
	\begin{itemize}
		\item gradient descent ($\beta=\gamma=0$), is a horizontal line segment parameterized by $a (\lambda) =0$;
		\item heavy-ball method ($\gamma=0$), is a horizontal line segment parameterized by $a (\lambda) =\beta$;
		and
		\item Nesterov's accelerated method ($\beta=\gamma$), is a line segment parameterized by
		$
		a (\lambda)
		= 
		- \beta b (\lambda) / (1 + \beta).
		$
	\end{itemize}
		These observations are illustrated in Figure~\ref{fig.GDNAHB-structure} and, as we show in the proof of Lemma~\ref{lem.expstabilityAlgorithm}, to obtain the largest possible condition number for which the convergence rate $\rho$ is feasible for each algorithm, one needs to find the largest ratio
	$
	d(\Lf)/d(\mf)=\kappa
	$
	among all possible orientations for the line segment $(b(\lambda),a(\lambda))$ with $\lambda\in[\mf,\Lf]$ to lie within $\Delta_\rho$. 
	\begin{itemize}
		\item For gradient descent, the largest ratio $d(\Lf)/d(\mf)$ corresponds to the intersections of the horizontal axis and the edges $Y_\rho Z_\rho$ and $X_\rho Z_\rho$ of the triangle $\Delta_\rho$, which are given by $(\rho,0)$ and $(-\rho,0)$, respectively. Thus,
		\begin{subequations}\label{eq.optimalRatesAltProof}
			\begin{align}
				\kappa
				\;=\;
				d(\Lf)/d(\mf)
				\;\le\;
				(1\,+\,\rho)/(1\,-\,\rho).
				\label{eq.gd-rate}
			\end{align}
			Rearranging terms in~\eqref{eq.gd-rate} yields a lower bound on the settling time for gradient descent
			$
			1/(1 - \rho)
			\ge
			(\kappa + 1)/2.
			$
			This lower bound is tight as it can be achieved by choosing the parameters in Table~\ref{tab:rates}, which place $(b(\lambda),a(\lambda))$ to $(\rho,0)$ and $(-\rho,0)$ for $\lambda=\Lf$ and $\lambda=\mf$, respectively. 
						
			\item For the heavy-ball method, the optimal rate is recovered by designing the parameters ($\alpha,\beta$) such that the vertices $X_\rho$ and $Y_\rho$ belong to the horizontal line segment $(b(\lambda),a(\lambda))$,
			\begin{align}
				\kappa
				\;=\;
				d(\Lf) / d(\mf)
				\;\le\;
				(1\,+\,\rho)^2 / (1\,-\,\rho)^2.
			\end{align}
			By choosing $d (\Lf) = d_{Y_\rho}$ and $d (\mf) = d_{X_\rho}$, we recover the optimal parameters provided in Table~\ref{tab:rates} and achieve the fundamental lower bound~\eqref{eq.NesFunLow} on the convergence rate.
			\item For Nesterov's accelerated method, the largest ratio $d(\Lf)/d(\mf)$ corresponds to the line segment $X_\rho X'_\rho$ that passes through the origin, where $X'_\rho=(2\rho/3,-\rho^2/3)$ lies on the edge $Y_\rho Z_\rho$; see Appendix~C in supplementary material. Thus,
			\begin{align}\label{eq.optimalRateNesProof}
				\hspace{-0.25 cm}
				\kappa
				\,=\,
				{d(\Lf)}/{d(\mf)}
				\,\le\,
				(1 + \rho)(3 - \rho)/ (3 (1-\rho)^2).
			\end{align}
		\end{subequations}
		Rearranging terms in this inequality provides a lower bound on the settling time
		$
		1/(1 - \rho)
		\ge 
		\sqrt{3\kappa + 1}/2.
		$
		This lower bound is tight and it can be achieved with the parameters provided in Table~\ref{tab:rates}, which place $(b(\Lf),a(\Lf))$ to $X'_\rho$ and $(b(\mf),a(\mf))$ to $X_\rho$. 
	\end{itemize} 
	\vsp
	Figure~\ref{fig.GDNAHB-structure} illustrates the optimal orientations discussed above.
		
	\vspace*{-2ex}
\subsection{Noise amplification}	
	
	To quantify the noise amplification of the two-step momentum algorithm~\eqref{alg.TM_disc}, we utilize an alternative characterization of the stability triangle $\Delta$. As illustrated in Figure~\ref{fig.StabilityExpStability-disc}, let $d$ and $l$ denote the horizontal signed distances of the point ($a,b$) to the edges $XZ$ and $YZ$,
	\begin{subequations}
		\label{eq.dhlDefinition}
		\beq
		\ba{rcl}
		d (\lambda)
		& \!\! \DefinedAs \!\! &
		a (\lambda) \,+\, b (\lambda) \,+\, 1
		\\[0.1cm]
		l (\lambda)
		& \!\! \DefinedAs \!\! &
		a (\lambda) \,-\,b (\lambda) \, + \,1
		\ea
		\eeq
		and let $h$ denote its vertical signed distance to the edge $XY$, 
		\beq
		h (\lambda)
		\,\DefinedAs\,
		1 \, - \, a (\lambda).
		\eeq
	\end{subequations}
Then, the equivalence condition
		\begin{align}\label{eq.equivalenceDelta}
			(b,a)
			\,\in\,
			\Delta
			&\,\iff\,
			h,\,d,\,l
			\, > \, 
			0
		\end{align}
follows from the definition of the set $\Delta$ in~\eqref{eq.Delta}.
		
	While analytical expressions for $J$ in terms of the spectrum of the Hessian matrix and the algorithmic parameters have been derived in the literature (e.g., see~\cite[Theorem~1]{mohrazjovCDC18} for gradient decent, heavy-ball, and Nesterov's accelerated algorithms, and~\cite[Appendix~A]{aybfalgurozd19b} and~\cite[Lemma~A.1]{cangur22} for the general case), the novelty of Theorem~\ref{thm.varianceJhat} lies in expressing $J$ in terms of the reciprocals of the distances $d (\lambda_i)$, $h (\lambda_i)$, and $l (\lambda_i)$ of the point $(b(\lambda_i),a(\lambda_i))$ to the edges of the stability triangle for the noisy two-step momentum algorithm~\eqref{alg.TM_disc}. This geometric insight facilitates proofs of our main results. The proof of Theorem~\ref{thm.varianceJhat} is straightforward and it is omitted for brevity.
	\vsp
	\begin{mythm}
		\label{thm.varianceJhat}
		For a strongly convex quadratic objective function $f\in\mathcal{Q}_{\mf}^{\Lf}$ with the Hessian matrix $Q$, the steady-state variance of $x^t-x^\star$ for the two-step momentum algorithm~\eqref{alg.TM_disc} with any stabilizing parameters $(\alpha,\beta,\gamma)$ is determined~by
		\begin{align*}
			J
			\;=\;
			\sum_{i \, = \, 1}^n 
			\dfrac{ \sigma_w^2}{2 h(\lambda_i)} 
			\!
			\left(
			\dfrac{1}{l (\lambda_i)} \, + \, \dfrac{1}{d (\lambda_i)}
			\right)
			\; \AsDefined \;
			\sum_{i \, = \, 1}^n \hat{J}(\lambda_i)
		\end{align*}
		Here, $\hat{J}(\lambda_i)$ denotes the modal contribution of the $i$th eigenvalue $\lambda_i$ of $Q$ to the steady-state variance, ($d,h,l$) are defined in~\eqref{eq.dhlDefinition}, and ($a,b$) are given by~\eqref{eq.ab}.
	\end{mythm}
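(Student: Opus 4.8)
The plan is to reduce the computation of $J$ in~\eqref{eq.Jnew} to $n$ decoupled $2\times 2$ discrete Lyapunov equations---one per eigenvalue of $Q$---and then to perform the elementary algebra that rewrites the resulting scalar expression in terms of the signed distances $(d,h,l)$ from~\eqref{eq.dhlDefinition}. \textbf{Step 1 (modal reduction).} Using the orthogonal eigenvalue decomposition $Q = V\Lambda V^T$ together with the change of variables $\hat x^t \DefinedAs V^T(x^t - x^\star)$, $\hat w^t \DefinedAs V^T w^t$ from Section~\ref{sec.md}, the dynamics split into the $n$ decoupled second-order subsystems~\eqref{eq.statespace}. Since $V$ is orthogonal, $\hat w^t$ is again a zero-mean white process with identity covariance, so its components are mutually uncorrelated; moreover $\|x^t - x^\star\|_2^2 = \|\hat x^t\|_2^2 = \sum_{i=1}^n (\hat x_i^t)^2$. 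Consequently $J = \sum_{i=1}^n \hat J(\lambda_i)$, where $\hat J(\lambda_i)$ is the steady-state variance of the scalar output $\hat z_i^t = \hat C_i\hat\psi_i^t = \hat x_i^t$ of the $i$th subsystem; this quantity is finite because stability of~\eqref{alg.TM_disc} guarantees $\spec(\hat A(\lambda_i))<1$.

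\textbf{Step 2 (one mode).} Fix a mode, drop the subscript, and write $a \DefinedAs a(\lambda_i)$, $b \DefinedAs b(\lambda_i)$, so that $\hat A(\lambda_i)$ is the matrix $M$ in~\eqref{eq.MatrixM}. For the Schur-stable recursion $\hat\psi^{t+1} = M\hat\psi^t + \hat B\hat w^t$ with $\hat B = \obt{0}{\sigma_w}^T$, the state covariance converges to the unique $P = P^T \succeq 0$ solving the discrete Lyapunov equation $P = M P M^T + \hat B\hat B^T$, and $\hat J(\lambda_i) = \hat C P \hat C^T = P_{11}$. Writing $P = \tbt{p_{11}}{p_{12}}{p_{12}}{p_{22}}$ and reading off the three independent scalar equations gives $p_{11} = p_{22}$, $(1+a)\,p_{12} = -b\,p_{22}$, and $(1-a^2-b^2)\,p_{22} = 2ab\,p_{12} + \sigma_w^2$. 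Eliminating $p_{12}$ and $p_{11}$ leaves $p_{22}\,\big[(1-a^2-b^2)(1+a) + 2ab^2\big] = \sigma_w^2\,(1+a)$.

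\textbf{Step 3 (back to $(d,h,l)$).} The bracket factors as $(1+a)(1-a^2) - b^2(1-a) = (1-a)\big[(1+a)^2 - b^2\big]$, hence $\hat J(\lambda_i) = p_{11} = \dfrac{\sigma_w^2\,(1+a)}{(1-a)\,[(1+a)^2 - b^2]}$. The definitions in~\eqref{eq.dhlDefinition} give $h(\lambda_i) = 1-a$, $d(\lambda_i)+l(\lambda_i) = 2(1+a)$, and $d(\lambda_i)\,l(\lambda_i) = (1+a)^2 - b^2$, which turns this into $\hat J(\lambda_i) = \dfrac{\sigma_w^2\,(d(\lambda_i)+l(\lambda_i))}{2\,d(\lambda_i)\,h(\lambda_i)\,l(\lambda_i)}$; summing over $i$ yields the claimed formula for $J$.

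The only nonroutine ingredient is the standard systems-theory fact invoked in Step~2: for a Schur-stable LTI system the time-averaged output variance in~\eqref{eq.Jnew} (and its pointwise limit in $t$) equals $\hat C P \hat C^T$ for the positive semidefinite solution $P$ of the discrete Lyapunov equation, i.e.\ $\EX[\hat\psi_i^k(\hat\psi_i^k)^T]\to P$ with a geometrically decaying transient; everything after that is the $2\times 2$ linear algebra above. This is precisely the argument used for the special cases $\gamma \in \{0,\beta\}$ in~\cite{mohrazjovTAC21}, which is why only these steps need to be recorded here.
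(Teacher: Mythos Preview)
Your proof is correct and follows exactly the approach the paper indicates: modal decomposition via the eigenbasis of $Q$ (Section~\ref{sec.md}) followed by solving the $2\times 2$ discrete Lyapunov equation for each mode, as sketched in Appendix~\ref{app.LyapuEquation} and attributed to~\cite{mohrazjovTAC21}. The paper omits the details for brevity, and your Steps~2--3 simply fill in the elementary linear algebra that yields $\hat J(\lambda_i)=\sigma_w^2(1+a)/\big[(1-a)\big((1+a)^2-b^2\big)\big]$ and its reexpression in terms of $(d,h,l)$.
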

	
	\vsp
	In Appendix~\ref{app.LyapuEquation}, we describe how the algebraic Lyapunov equation for the steady-state covariance matrix of the error in the optimization variable can be used to compute the noise amplification $J$. Theorem~\ref{thm.varianceJhat} demonstrates that $J$ depends on the entire spectrum of the Hessian matrix $Q$ and not only on its extreme eigenvalues $m$ and $L$, which determine the convergence rate. Since for any $f \in \mathcal{Q}_{\mf}^{\Lf}$ the extreme eigenvalues of $Q$ are fixed at $m$ and $L$, we have 
		\beq
		\ba{rcl}
		J_{\max} 
		& \!\!\! = \!\!\! &
		\hat{J}(\mf) 
		\,+\,
		\hat{J}(\Lf)
		\,+\,
		(n\,-\,2) \hat{J}_{\max}
		\\[0.1cm]
		J_{\min} 
		& \!\!\! = \!\!\! &
		\hat{J}(\mf) 
		\,+\,
		\hat{J}(\Lf)
		\,+\,
		(n\,-\,2) \hat{J}_{\min}
		\ea
		\hspace{-0.15 cm}
		\label{eq.JhatMinMaxDetermineJMinMax}
		\eeq 
		where
		$
		\hat{J}_{\max}
		\DefinedAs 
		\max_{\lambda\in[\mf,\Lf]} \hat{J}(\lambda)
		$,
		$
		\hat{J}_{\min}
		\DefinedAs
		\min_{\lambda\in[\mf,\Lf]} \hat{J}(\lambda)
		$.
	We use~\eqref{eq.JhatMinMaxDetermineJMinMax} to determine explicit bounds on $J_{\max}$ and $J_{\min}$  in terms of the condition number and the settling time.
	
	\vspace*{-1ex}
	\section{Designing order-wise Pareto-optimal algorithms with adjustable parameters}
	\label{sec.Tuning}
	
	We now utilize the geometric insight developed in Section~\ref{sec.modalDecomp} to design algorithmic parameters that tradeoff settling time and noise amplification. In particular, we introduce two parameterized families of heavy-ball-like ($\gamma = 0$) and Nesterov-like ($\gamma = \beta$) algorithms that provide {\em continuous transformations\/} from gradient descent to the corresponding accelerated algorithm (with the optimal convergence rate) via a homotopy path parameterized by the settling time $\Ts$. For both the iterate and gradient noise models, we establish an order-wise tight scaling $\Theta(\kappa^2)$ for $J_{\max}\times \Ts$ and  $J_{\min}\times \Ts$ in accelerated regime (i.e., when $\Ts$ is smaller than the settling time of gradient descent with the optimal stepsize, $(\kappa + 1)/2$). This is a direct extension of~\cite[Theorem~4]{mohrazjovTAC21} which studied gradient descent and its accelerated variants for the parameters that optimize the corresponding convergence rates. 
	
	We also examine performance tradeoffs for the parameterized family of heavy-ball-like algorithms with negative momentum parameter $\beta<0$. This decelerated regime corresponds to settling times larger than $(\kappa+1)/2$ and it captures a key difference between the two noise models: {\em for $\Ts\ge(\kappa+1)/2$, $J_{\max}$ and $J_{\min}$ grow linearly with the settling time $\Ts$ for the iterate noise model and they remain inversely proportional to $\Ts$ for the gradient noise model.\/} Comparison with the lower bounds in Theorems~\ref{thm.LowerJ} and~\ref{thm.LowerBoundSettlingTimeBased} shows that the parameterized family of heavy-ball-like methods yields order-wise optimal (in $\kappa$ and $\Ts$) $J_{\max}$ and $J_{\min}$ for both noise models. The results presented here prove all upper bounds in Theorems~\ref{thm.LowerBoundSettlingTimeBased} and~\ref{thm.mainResultHBLike}. 
	
	\vspace*{-2ex}
	\subsection{Parameterized family of heavy-ball-like methods}
	
	For the two-step momentum algorithm~\eqref{alg.TM_disc} with $\gamma=0$, the line segment $(b(\lambda),a(\lambda))$ parameterized by $\lambda\in[\mf,\Lf]$ is parallel to the $b$-axis in the ($b,a$)-plane and it satisfies $a (\lambda) = \beta$. As described in Section~\ref{sec.modalDecomp}, gradient descent and heavy-ball methods with the optimal parameters provided in Table~\ref{tab:rates} are obtained for $\beta = 0$ and $\beta = \rho^2$, respectively, and the corresponding end points $(b(\mf),a(\mf))$ and $(b(\Lf),a(\Lf))$ lie at the edges $X_\rho Z_\rho$ and $Y_\rho Z_\rho$ of the $\rho$-linear convergence triangle $\Delta_\rho$. Inspired by this observation, we propose a family of parameters for which $\beta = c \rho^2$, for some scalar $c\in[-1,1]$, and determine the stepsize $\alpha$ such that the above end points lie at $X_\rho Z_\rho$ and $Y_\rho Z_\rho$,
	\beq
	\label{eq.parametersHeavy-ballLike}
	\alpha
	\;=\;
	(1\,+\,\rho)(1\,+\,c\rho) / \Lf,
	\;\;
	\beta
	\;=\;
	c\rho^2,
	\;\;
	\gamma
	\;=\;
	0.
	\eeq 
	This yields a continuous transformation between the standard heavy-ball method ($c=1$) and gradient descent ($c=0$) for a fixed condition number $\kappa$. In addition, the momentum parameter $\beta$ in~\eqref{eq.parametersHeavy-ballLike} becomes negative for $c<0$; see Figure~\ref{fig.GDNAHB-structure} for an illustration. In Lemma~\ref{lem.individualJhatHeavyBallLike}, we provide expressions for the scalar $c$ as well as for $\hat{J}_{\max}$ and $\hat{J}_{\min}$ defined in~\eqref{eq.JhatMinMaxDetermineJMinMax} in terms of the condition number $\kappa$ and the convergence rate $\rho$.
	
	\vsp
	
	\begin{mylem}\label{lem.individualJhatHeavyBallLike}
		For the class of functions $\mathcal{Q}_{\mf}^{\Lf}$ with the condition number $\kappa=\Lf/\mf$, let the scalar $\rho$ be such that   	\beq
		\Ts \,=\, 1/(1 \, - \, \rho) \,\ge\, (\sqrt{\kappa} \, +1 \, )/2.
		\non
		\eeq
		Then, the two-step momentum algorithm~\eqref{alg.TM_disc} with parameters~\eqref{eq.parametersHeavy-ballLike}
		achieves the convergence rate $\rho$, and the largest and smallest values $\hat{J}_{\max}$ and $\hat{J}_{\min}$ of $\hat{J}(\lambda)$ satisfy 
		\begin{align*}
			\hat{J}_{\max}
			&\,=\,
			\hat{J}(\mf)
			\,=\,
			\hat{J}(\Lf)
			\,=\,
			\dfrac{\sigma_w^2(\kappa\,+\,1)}{2(1-c\rho^2)(1+\rho)(1+c\rho)}
			\\[0.cm]
			\hat{J}_{\min}
			&\;=\;
			\hat{J}(\hat{\lambda})
			\;=\;
			\dfrac{\sigma_w^2}{( 1 \, + \, c \rho^2 ) (1 \, - \, c \rho^2)}
		\end{align*}
		where $\hat{\lambda}\DefinedAs(\mf+\Lf)/2$ and the scalar $c$ is given by
		\begin{align}\label{eq.scalar_c_HeavyballLike}
			c
			\;\DefinedAs\;
			\dfrac{\kappa \,-\, (1\,+\,\rho) / (1 \,-\, \rho)}{\rho\left(\kappa \,+\, (1\,+\,\rho) / (1 \,-\, \rho) \right)}
			\, \in \,
			[-1,\,1].
		\end{align}
	\end{mylem}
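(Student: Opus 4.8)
\textbf{Proof proposal for Lemma~\ref{lem.individualJhatHeavyBallLike}.}

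The plan is to reduce everything to the geometric picture of Section~\ref{sec.modalDecomp}. First I would verify the claimed convergence rate: with $\gamma=0$ and $\beta=c\rho^2$, the segment $(b(\lambda),a(\lambda))$ for $\lambda\in[\mf,\Lf]$ is horizontal at height $a=c\rho^2\in[-\rho^2,\rho^2]$, so it lies inside $\Delta_\rho$ provided its endpoints do. By~\eqref{eq.dDef}, $d(\lambda)=\alpha\lambda$, so with the stepsize in~\eqref{eq.parametersHeavy-ballLike} one has $d(\Lf)=\alpha\Lf=(1+\rho)(1+c\rho)$ and $d(\mf)=\alpha\mf=d(\Lf)/\kappa$. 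I would then check, using the scalar $c$ defined in~\eqref{eq.scalar_c_HeavyballLike}, that $d(\Lf)$ and $d(\mf)$ equal the horizontal distances from the vertices $Y_\rho$ and $X_\rho$ of $\Delta_\rho$ to the edge $XZ$, i.e.\ that the endpoints land precisely on the edges $Y_\rho Z_\rho$ and $X_\rho Z_\rho$; this is a direct algebraic identity that also certifies $c\in[-1,1]$ (the constraint $\Ts\ge(\sqrt\kappa+1)/2$ is exactly what keeps $c\le1$, via~\eqref{eq.boudnKappaGeneral}), hence $(b(\lambda),a(\lambda))\in\Delta_\rho$ for all $\lambda$, and Lemma~\ref{lem.expstabilityAlgorithm} gives rate $\rho$.

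Next I would compute the modal contribution $\hat J(\lambda)$ from Theorem~\ref{thm.varianceJhat}, namely $\hat J(\lambda)=\sigma_w^2(d(\lambda)+l(\lambda))/(2\,d(\lambda)h(\lambda)l(\lambda))$. Since $a(\lambda)\equiv c\rho^2$ is constant, $h(\lambda)=1-c\rho^2$ is constant, and $d(\lambda)=\alpha\lambda$, $l(\lambda)=a(\lambda)-b(\lambda)+1=2(1+c\rho^2)-\alpha\lambda$ (using $d+l=2(1+a)$). Writing $u\DefinedAs d(\lambda)=\alpha\lambda$, which ranges over $[\alpha\mf,\alpha\Lf]$, the modal variance becomes
\[
\hat J \;=\; \dfrac{\sigma_w^2}{1-c\rho^2}\cdot\dfrac{d+l}{2\,d\,l}
\;=\;\dfrac{\sigma_w^2}{1-c\rho^2}\cdot\dfrac{2(1+c\rho^2)}{2\,u\,(2(1+c\rho^2)-u)}
\;=\;\dfrac{\sigma_w^2(1+c\rho^2)}{(1-c\rho^2)\,u\,(2(1+c\rho^2)-u)}.
\]
This is a symmetric rational function of $u$ about the midpoint $u=1+c\rho^2$: the denominator $u(2(1+c\rho^2)-u)$ is a downward parabola maximized at $u=1+c\rho^2$ and decreasing toward the endpoints. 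Hence $\hat J$ is \emph{minimized} at the midpoint and \emph{maximized} at whichever endpoint of $[\alpha\mf,\alpha\Lf]$ is farther from $1+c\rho^2$. By the endpoint identities above, $\alpha\Lf=(1+\rho)(1+c\rho)$ and $\alpha\mf=(1-\rho)(1-c\rho)$ (the latter because the endpoint sits on $X_\rho Z_\rho$), and these two are symmetric about $1+c\rho^2$ — indeed their average is $\tfrac12[(1+\rho)(1+c\rho)+(1-\rho)(1-c\rho)]=1+c\rho^2$ — so \emph{both} endpoints are equidistant from the vertex of the parabola. Therefore $\hat J_{\max}=\hat J(\mf)=\hat J(\Lf)$, and the midpoint value $u=1+c\rho^2$ is attained at $\lambda=(\alpha\mf+\alpha\Lf)/(2\alpha)=(\mf+\Lf)/2=\hat\lambda$, giving $\hat J_{\min}=\hat J(\hat\lambda)$.

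It remains to substitute. At $u=1+c\rho^2$: $\hat J_{\min}=\sigma_w^2(1+c\rho^2)/((1-c\rho^2)(1+c\rho^2)^2)=\sigma_w^2/((1+c\rho^2)(1-c\rho^2))$, as claimed. At $u=\alpha\Lf=(1+\rho)(1+c\rho)$: the other factor is $2(1+c\rho^2)-u=\alpha\mf=(1-\rho)(1-c\rho)$, so $u(2(1+c\rho^2)-u)=(1+\rho)(1-\rho)(1+c\rho)(1-c\rho)=(1-\rho^2)(1-c^2\rho^2)$, whence $\hat J_{\max}=\sigma_w^2(1+c\rho^2)/((1-c\rho^2)(1-\rho^2)(1-c^2\rho^2))$; one then checks this equals $\sigma_w^2(\kappa+1)/(2(1-c\rho^2)(1+\rho)(1+c\rho))$ by using $\kappa+1=\alpha\Lf/\alpha\mf+1=(\alpha\Lf+\alpha\mf)/\alpha\mf=2(1+c\rho^2)/((1-\rho)(1-c\rho))$ together with $(1+\rho)(1-\rho)(1+c\rho)=(1-\rho^2)(1+c\rho)$ — a short simplification. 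The only genuinely delicate point is the bookkeeping that pins the two endpoints $\alpha\mf,\alpha\Lf$ symmetrically around $1+c\rho^2$; once that symmetry is established the max/min locations and the closed forms follow mechanically. I would present the endpoint-placement computation carefully and relegate the final algebraic identities to a one-line verification.
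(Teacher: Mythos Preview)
Your proposal is correct and follows essentially the same route as the paper: place the endpoints of the horizontal segment on the edges $X_\rho Z_\rho$ and $Y_\rho Z_\rho$, read off $d,h,l$, and use the resulting symmetry to locate $\hat J_{\max}$ at both endpoints and $\hat J_{\min}$ at the midpoint $\hat\lambda=(\mf+\Lf)/2$. The only cosmetic difference is that the paper invokes the general convexity of $\hat J$ in $(b,a)$ (Appendix~\ref{app.convexity}) plus the symmetry $\hat J(\lambda)=\hat J(\mf+\Lf-\lambda)$, whereas you obtain the same conclusion more directly by writing $\hat J$ as $\mathrm{const}/\big(u(2(1+c\rho^2)-u)\big)$ and reading off the parabola's vertex; both arguments yield the identical closed forms.
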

	\begin{proof}
		See Appendix~\ref{app.Tuning}.
	\end{proof}
	
	\vsp
	The parameters in~\eqref{eq.parametersHeavy-ballLike} with $c$ given by~\eqref{eq.scalar_c_HeavyballLike} are equivalent to the parameters presented in Theorem~\ref{thm.mainResultHBLike}. Lemma~\ref{lem.individualJhatHeavyBallLike} in conjunction with~\eqref{eq.JhatMinMaxDetermineJMinMax} allow us to compute $J_{\max}$ and $J_{\min}$.
	
	\vsp
	
	\begin{mycor}\label{cor.JHeavyBallLike}
		The parameterized family of heavy-ball-like methods~\eqref{eq.parametersHeavy-ballLike} satisfies
		$
		{J_{\max}}
		=
		n \hat{J}(\mf)
		=
		n \hat{J}(\Lf)
		$
		and
		$
		{J_{\min}}
		=
		2 \hat{J}(\mf)
		+
		(n-2)\hat{J}(\hat{\lambda})
		$,
		where $\hat{J}(\mf)$ and $\hat{J}(\hat{\lambda})$ are given in Lemma~\ref{lem.individualJhatHeavyBallLike}, and
		$J_{\max}$ and $J_{\min}$  defined in~\eqref{eq.JmaxJminDef} are the largest and smallest values of $J$  when the algorithm is applied to $f\in\mathcal{Q}_{\mf}^{\Lf}$ with the condition number $\kappa=\Lf/\mf$.
	\end{mycor}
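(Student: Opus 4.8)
The plan is to obtain the claimed closed-form expressions by chaining together three facts already available in the excerpt: the modal decomposition of the noise amplification in Theorem~\ref{thm.varianceJhat}, the reduction~\eqref{eq.JhatMinMaxDetermineJMinMax} of $J_{\max}$ and $J_{\min}$ to the extreme-eigenvalue contributions plus $(n-2)$ copies of $\hat{J}_{\max}$ or $\hat{J}_{\min}$, and the explicit evaluation of $\hat{J}_{\max}$ and $\hat{J}_{\min}$ for the parameterized family~\eqref{eq.parametersHeavy-ballLike} provided by Lemma~\ref{lem.individualJhatHeavyBallLike}. First I would observe that under the hypothesis $\Ts = 1/(1-\rho)\ge(\sqrt{\kappa}+1)/2$ the scalar $c$ in~\eqref{eq.scalar_c_HeavyballLike} lies in $[-1,1]$, so the parameters~\eqref{eq.parametersHeavy-ballLike} are well defined, and by Lemma~\ref{lem.individualJhatHeavyBallLike} they are stabilizing (they achieve rate $\rho<1$). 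Hence Theorem~\ref{thm.varianceJhat} applies; and because the extreme eigenvalues of every $f\in\mathcal{Q}_{\mf}^{\Lf}$ are pinned at $\mf$ and $\Lf$ while the remaining $n-2$ eigenvalues range freely over $[\mf,\Lf]$, the identities~\eqref{eq.JMinMaxExpressions} hold for this family.

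The remaining step is pure substitution. Lemma~\ref{lem.individualJhatHeavyBallLike} gives $\hat{J}_{\max}=\hat{J}(\mf)=\hat{J}(\Lf)$, so~\eqref{eq.JMinMaxExpressions} yields
\[
J_{\max} \;=\; \hat{J}(\mf)+\hat{J}(\Lf)+(n-2)\,\hat{J}_{\max} \;=\; n\,\hat{J}(\mf) \;=\; n\,\hat{J}(\Lf).
\]
Similarly, Lemma~\ref{lem.individualJhatHeavyBallLike} gives $\hat{J}_{\min}=\hat{J}(\hat{\lambda})$ with $\hat{\lambda}=(\mf+\Lf)/2$, and combining this with $\hat{J}(\mf)=\hat{J}(\Lf)$ gives
\[
J_{\min} \;=\; \hat{J}(\mf)+\hat{J}(\Lf)+(n-2)\,\hat{J}(\hat{\lambda}) \;=\; 2\,\hat{J}(\mf)+(n-2)\,\hat{J}(\hat{\lambda}).
\]

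I do not anticipate a genuine obstacle here: all of the analytical work---locating the maximizer and minimizer of $\hat{J}(\lambda)$ over $[\mf,\Lf]$ and evaluating $\hat{J}$ at those points---is carried out in Lemma~\ref{lem.individualJhatHeavyBallLike}, whose proof is deferred to Appendix~\ref{app.Tuning}. The only point meriting a line of justification is the compatibility of the two parameterizations used in the paper, namely that~\eqref{eq.parametersHeavy-ballLike} with $c$ as in~\eqref{eq.scalar_c_HeavyballLike} describes the same algorithm as in Theorem~\ref{thm.mainResultHBLike}; this is an elementary algebraic check. The resulting closed-form expressions for $J_{\max}$ and $J_{\min}$ are precisely what is needed to prove the upper bounds in Theorems~\ref{thm.mainResultHBLike} and~\ref{thm.LowerBoundSettlingTimeBased} by bounding $\hat{J}(\mf)$ and $\hat{J}(\hat{\lambda})$ in terms of $\kappa$ and $\Ts$.
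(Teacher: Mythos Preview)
Your proposal is correct and follows essentially the same approach as the paper: the text preceding the corollary states that it is obtained by combining Lemma~\ref{lem.individualJhatHeavyBallLike} with~\eqref{eq.JhatMinMaxDetermineJMinMax}, and your argument is exactly this substitution spelled out. The analogous Corollary~\ref{cor.JNesterovLike} in Appendix~\ref{app.Tuning} is proved in the same way, confirming that no additional ingredient is needed.
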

	
	\vsp
		
	 Proposition~\ref{prop.JHeavyBallLikeBounds} uses the expressions in Corollary~\ref{cor.JHeavyBallLike} to establish order-wise tight upper and lower bounds on $J_{\max}$ and $J_{\min}$ for the parameterized family of heavy-ball-like algorithms~\eqref{eq.parametersHeavy-ballLike}. Our upper and lower bounds are within constant factors of each other and they are expressed in terms of the problem size $n$, condition number $\kappa$, and settling time $\Ts$.
	
	\vsp
	
		\begin{myprop}\label{prop.JHeavyBallLikeBounds}
		For the parameterized family of heavy-ball-like methods~\eqref{eq.parametersHeavy-ballLike},     $J_{\max}$ and $J_{\min}$ in~\eqref{eq.JmaxJminDef} satisfy,
		\begin{subequations}
			\begin{align}
				J_{\max}\times \Ts
				&\;=\;
				\sigma_w^2 \, p_{1c} (\rho) \,n\,\kappa(\kappa \,+\, 1)
				\\[0.cm]
				J_{\min}
				\times\Ts
				&\;=\;
				\sigma_w^2 \, \kappa \left(2\, p_{1c} (\rho) \, (\kappa \,+\, 1) 
				\,+\,
				(n \, - \, 2) \, p_{2c} (\rho)
				\right).
			\end{align}
		\end{subequations}
		Furthermore, for the gradient noise model ($\sigma_w=\alpha\sigma$), 
		\begin{subequations}
			\begin{align}
				J_{\max}\times\Ts
				&\;=\;
				\sigma^2  p_{3c} (\rho) \,n\,\kappa(\kappa \,+\, 1)
				\\[0.cm]
				J_{\min}\times\Ts
				&\;=\;
				\sigma^2 \kappa \left(2\, p_{3c} (\rho) \, (\kappa \,+\, 1) 
				\,+\,
				(n \, - \, 2) \, p_{4c} (\rho)
				\right)
			\end{align}
		\end{subequations}
		where	
		\be
		\ba{rcl}
		p_{1c} (\rho) 
		& \!\!\! \DefinedAs \!\!\! & 
		q_c (\rho) / (2(1+\rho)^2(1+c\rho)^2)
		\\[0.1cm]
		p_{2c} (\rho) 
		& \!\!\! \DefinedAs \!\!\! &
		q_c (\rho) / ((1+\rho)(1+c\rho^2)(1+c\rho))
		\\[0.1cm]
		p_{3c} (\rho) 
		& \!\!\! \DefinedAs \!\!\! & 
		q_c (\rho)/(2 L^2)
		\\[0.1cm]
		p_{4c} (\rho) 
		& \!\!\! \DefinedAs \!\!\! & 
		q_c (\rho) q_{-c}(\rho) (1+\rho)/L^2 
		\\[0.1cm]
		q_c (\rho)
		& \!\!\! \DefinedAs \!\!\! & 
		(1 \, - \, c \rho)/(1 \, - \, c \rho^2).
		\ea
		\label{eq.p_iDef}
		\ee
In addition, for $c\in[0,1]$, 
		$
		p_{1c} (\rho)
		\in
		[1/64, 1/2]
		$ 
		and
		$
		p_{2c} (\rho)
		\in
		[1/16,1]
		$;
		and for $c\in[-1,1]$, 
		$
		p_{3c} (\rho)
		\in
		[1/(4\Lf^2),1/\Lf^2]
		$
		and
		$
		p_{4c} (\rho)
		\in
		[1/(4\Lf^2),4/\Lf^2].
		$
	\end{myprop}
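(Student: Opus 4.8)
The plan is to start from the closed-form expressions for $J_{\max}$ and $J_{\min}$ in Corollary~\ref{cor.JHeavyBallLike} and substitute the formulas for $\hat J(\mf)=\hat J(\Lf)$ and $\hat J(\hat\lambda)$ supplied by Lemma~\ref{lem.individualJhatHeavyBallLike}, together with $\Ts=1/(1-\rho)$ and $\Lf=\kappa\mf$. The first step is purely algebraic bookkeeping: write $\hat J(\mf)\,\Ts$ and $\hat J(\hat\lambda)\,\Ts$ explicitly, and recognize that the ratio $(\kappa+1)$ appears naturally once we use the identity $\kappa+1 = 2L\,\hat\lambda/(\mf\Lf)\cdot(\dots)$ — more concretely, from $\hat\lambda=(\mf+\Lf)/2$ we get $2\hat\lambda/\Lf = (\kappa+1)/\kappa$, so factors of $\kappa$ and $(\kappa+1)$ can be pulled out cleanly. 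After this substitution the coefficients that remain are exactly the rational functions $p_{1c},p_{2c},p_{3c},p_{4c}$ of $\rho$ (and $c$, $\Lf$); I would define $q_c(\rho)\DefinedAs(1-c\rho)/(1-c\rho^2)$ first and verify by direct computation that the messy prefactors collapse into the stated $p_{ic}$. The gradient-noise case ($\sigma_w=\alpha\sigma$) is handled by the same substitution after replacing $\sigma_w^2$ with $\alpha^2\sigma^2$ and using $\alpha=(1+\rho)(1+c\rho)/\Lf$ from~\eqref{eq.parametersHeavy-ballLike}; the extra factor $\alpha^2$ is what converts $p_{1c},p_{2c}$ into $p_{3c},p_{4c}$, and one checks that $\alpha^2 p_{1c}(\rho) = p_{3c}(\rho)$ and $\alpha^2 p_{2c}(\rho) = p_{4c}(\rho)$ after simplification.

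The second, more substantive step is to establish the claimed numerical ranges for the $p_{ic}$. Here I would use the constraint linking $c$ and $\rho$: from~\eqref{eq.scalar_c_HeavyballLike}, $c\in[-1,1]$, and the hypothesis $\Ts\ge(\sqrt\kappa+1)/2$ forces $\rho\in[0,1)$ with the additional coupling that, when $c\ge0$ (the accelerated regime), $\rho$ ranges over $[\rho_{\gd},\rho_{\hb}]$ where $\rho_{\gd}=(\kappa-1)/(\kappa+1)$ and $\rho_{\hb}=(\sqrt\kappa-1)/(\sqrt\kappa+1)$. For the bound $p_{1c}(\rho)\in[1/64,1/2]$: writing $p_{1c}(\rho)=q_c(\rho)/(2(1+\rho)^2(1+c\rho)^2)$, I would bound $q_c(\rho)=(1-c\rho)/(1-c\rho^2)\in[\,?\,,\,?\,]$ using $c\rho^2\le c\rho$ (for $c,\rho\ge0$) to get $q_c\ge1$, and $q_c=(1-c\rho)/((1-c\rho)(1+c\rho/(1-c\rho)\cdot\dots))$… more simply $1-c\rho^2\ge1-c\rho\ge(1-c\rho)$, wait: $q_c(\rho)=\tfrac{1-c\rho}{1-c\rho^2}$ and since $0\le c\rho^2\le c\rho\le1$ we have numerator $\le$ denominator, so $q_c(\rho)\le1$; also $q_c(\rho)\ge\tfrac{1-c\rho}{1}=1-c\rho\ge0$, and a sharper lower bound $q_c(\rho)\ge1-\rho$ (or a constant) comes from the range of $\rho$. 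Then $(1+\rho)^2(1+c\rho)^2\le(1+\rho)^4\le16$ gives $p_{1c}\ge q_c/32$, and the constant $1/64$ absorbs the worst case of $q_c$; the upper bound $p_{1c}\le1/2$ follows from $q_c\le1$ and $(1+\rho)^2(1+c\rho)^2\ge1$. The same style of elementary estimates — monotonicity in $\rho$, the inequality $c\rho^2\le c\rho$, and $1\le(1+\rho)\le2$ — handles $p_{2c},p_{3c},p_{4c}$, with $p_{4c}$ requiring the extra observation that $q_c(\rho)q_{-c}(\rho)=\tfrac{(1-c\rho)(1+c\rho)}{(1-c\rho^2)(1+c\rho^2)}=\tfrac{1-c^2\rho^2}{1-c^2\rho^4}\in[?,1]$, which is again bounded by the $c\rho^2\le c\rho$ trick.

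I expect the main obstacle to be the range estimates rather than the identity-matching. The identity-matching is deterministic algebra that must simply be carried out carefully (the risk is a dropped factor of $\rho$ or $\kappa$), but the range claims require pinning down over which joint region of $(c,\rho)$ the bounds must hold: the proposition states $c\in[0,1]$ for $p_{1c},p_{2c}$ but $c\in[-1,1]$ for $p_{3c},p_{4c}$, so for the latter two I must also control the regime $c<0$ (negative momentum, decelerated), where $q_c(\rho)=(1-c\rho)/(1-c\rho^2)$ now has numerator $\ge$ denominator since $c\rho^2\ge c\rho$ is false — rather $|c|\rho^2\le|c|\rho$ so $1-c\rho^2=1+|c|\rho^2\le1+|c|\rho=1-c\rho$, giving $q_c\ge1$ when $c<0$. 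Thus the two-sided bound on $q_c$ flips sign with $c$, and I must verify that the stated constants $[1/(4\Lf^2),1/\Lf^2]$ and $[1/(4\Lf^2),4/\Lf^2]$ genuinely cover both signs; I would do this by treating $c\ge0$ and $c\le0$ as separate cases and checking the extreme values $c=0$, $c=\pm1$ together with the endpoints of the $\rho$-range. Once those case splits are laid out, each bound reduces to a one-variable calculus exercise (or a monotonicity argument) that I would verify by evaluating at the endpoints and checking the derivative sign, invoking the coupling between $c$ and $\rho$ from~\eqref{eq.scalar_c_HeavyballLike} only where needed to exclude infeasible combinations.
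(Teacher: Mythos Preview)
Your approach is essentially the paper's: compute $\hat J(\mf)\times\Ts$ and $\hat J(\hat\lambda)\times\Ts$ from Lemma~\ref{lem.individualJhatHeavyBallLike}, identify the resulting rational functions as the $p_{ic}$, invoke Corollary~\ref{cor.JHeavyBallLike}, and then bound $q_c(\rho)$ to get the numerical ranges. The paper does exactly this, treating $c$ and $\rho$ as \emph{independent} over the stated intervals (no coupling from~\eqref{eq.scalar_c_HeavyballLike} is needed for the range claims).

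One concrete fix: your candidate lower bounds $q_c(\rho)\ge 1-c\rho$ and $q_c(\rho)\ge 1-\rho$ for $c\in[0,1]$ do not give a uniform positive constant (both vanish as $\rho\to1$), so your ``$q_c/32\Rightarrow 1/64$'' step does not close. The bound the paper uses is
\[
q_c(\rho)\;=\;\dfrac{1-c\rho}{1-c\rho^2}\;\ge\;\dfrac{1}{1+c\rho}\quad\text{for }c\in[0,1],\ \rho\in(0,1),
\]
which follows from $(1-c\rho)(1+c\rho)=1-c^2\rho^2\ge 1-c\rho^2$ (since $c^2\le c$). With this, $p_{1c}(\rho)\ge 1/\bigl(2(1+\rho)^2(1+c\rho)^3\bigr)\ge 1/64$ and $p_{2c}(\rho)\ge 1/\bigl((1+\rho)(1+c\rho^2)(1+c\rho)^2\bigr)\ge 1/16$ follow immediately. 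For $c\in[-1,0]$ your observation $q_c(\rho)\ge 1$ is correct, and the paper records $q_c(\rho)\in[1/2,2]$ in that range (from $1\le 1+|c|\rho^2\le 1+|c|\rho\le 2$), which is what drives the $p_{3c},p_{4c}$ bounds. Everything else in your plan is sound.
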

	\vsp
	\begin{proof}
		See Appendix~\ref{app.Tuning}.
	\end{proof}
	\vsp
	\begin{myprop}
		\label{prop.HBlikeNegative}
		For the parameterized family of heavy-ball-like methods~\eqref{eq.parametersHeavy-ballLike} with $c\in[-1,0]$,     $J_{\max}$ and $J_{\min}$ in~\eqref{eq.JmaxJminDef} satisfy,
		\begin{subequations}
			\begin{align}
				J_{\max}
				&\;=\;
				\sigma_w^2 \, p_{5c}(\rho)\,n\,(1 \,+\, 1/\kappa)\,\Ts
				\\[0.cm]
				J_{\min}
				&\;=\;
				\sigma_w^2 \big(2\,p_{5c}(\rho)\,(1 \, + \, 1/\kappa) \, + \, p_{6c}(\rho)(n-2)/\kappa \big) \, \Ts		
			\end{align}
		\end{subequations}
		where
		$
		p_{5c}(\rho)
		\DefinedAs
		1/({2(1+ |c| \rho)(1+|c|\rho^2)})
		\in
		[1/8,1/2]
		$
		and
		$
		p_{6c}(\rho)
		\DefinedAs
		2 (1+\rho) p_{5c}(\rho)q_{-c}(\rho)
		\in
		[1/8,2].
		$
	\end{myprop}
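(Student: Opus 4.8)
The plan is to reduce the whole statement to a single algebraic identity obtained by rearranging the definition~\eqref{eq.scalar_c_HeavyballLike} of $c$. First observe that within the family~\eqref{eq.parametersHeavy-ballLike} the constraint $c\in[-1,0]$ corresponds, via~\eqref{eq.scalar_c_HeavyballLike}, to the decelerated regime $\Ts=1/(1-\rho)\ge(\kappa+1)/2$, which in particular implies $\Ts\ge(\sqrt\kappa+1)/2$, so the hypotheses of Lemma~\ref{lem.individualJhatHeavyBallLike} and Corollary~\ref{cor.JHeavyBallLike} are in force. By Corollary~\ref{cor.JHeavyBallLike}, $J_{\max}=n\,\hat{J}(\mf)$ and $J_{\min}=2\hat{J}(\mf)+(n-2)\hat{J}(\hat{\lambda})$, with $\hat{J}(\mf)$ and $\hat{J}(\hat{\lambda})$ given in closed form by Lemma~\ref{lem.individualJhatHeavyBallLike}. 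Hence the proposition follows once I show the two scalar identities
\[
\hat{J}(\mf)\;=\;\sigma_w^2\,p_{5c}(\rho)\,(1+1/\kappa)\,\Ts
\qquad\text{and}\qquad
\hat{J}(\hat{\lambda})\;=\;\sigma_w^2\,p_{6c}(\rho)\,\Ts/\kappa ,
\]
since substituting these into the two displays above gives exactly the claimed formulas. (Equivalently, one could divide the $\kappa^2$-scaling formulas of Proposition~\ref{prop.JHeavyBallLikeBounds} by $\Ts$; that amounts to proving $p_{1c}(\rho)\,\kappa^2=p_{5c}(\rho)\,\Ts^2$ and $p_{2c}(\rho)\,\kappa^2=p_{6c}(\rho)\,\Ts^2$, and both reduce to the same identity below.)

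The key step is to clear denominators in~\eqref{eq.scalar_c_HeavyballLike} and use $\Ts=1/(1-\rho)$, which yields
\[
\kappa\,(1-c\rho)\;=\;(1+\rho)(1+c\rho)\,\Ts .
\]
I would then substitute this into the two target identities. Since $c\le0$ we have $|c|=-c$, so from~\eqref{eq.p_iDef}, $p_{5c}(\rho)=1/\big(2(1-c\rho)(1-c\rho^2)\big)$ and, after bringing it to single-fraction form, $p_{6c}(\rho)=(1+\rho)(1+c\rho)/\big((1-c\rho)(1-c\rho^2)(1+c\rho^2)\big)$. With these and the closed forms $\hat{J}(\mf)=\sigma_w^2(\kappa+1)/\big(2(1-c\rho^2)(1+\rho)(1+c\rho)\big)$ and $\hat{J}(\hat{\lambda})=\sigma_w^2/\big((1+c\rho^2)(1-c\rho^2)\big)$ from Lemma~\ref{lem.individualJhatHeavyBallLike}, each of the two identities collapses, after cancelling the obvious common factors, to precisely $\kappa(1-c\rho)=(1+\rho)(1+c\rho)\Ts$. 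This would establish the stated formulas for $J_{\max}$ and $J_{\min}$.

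It remains to record the elementary ranges of $p_{5c}$ and $p_{6c}$ used in the statement. For $c\in[-1,0]$ and $\rho\in(0,1)$, both $1+|c|\rho$ and $1+|c|\rho^2$ lie in $[1,2)$, so their product lies in $[1,4)$ and $p_{5c}(\rho)=1/\big(2(1+|c|\rho)(1+|c|\rho^2)\big)\in[1/8,1/2]$. For $p_{6c}$, I would use that $q_{-c}(\rho)=(1-|c|\rho)/(1-|c|\rho^2)$ is nonincreasing in $|c|$ (its $|c|$-derivative has the sign of $\rho^2-\rho<0$), hence $q_{-c}(\rho)\in[1/(1+\rho),1]$ and $(1+\rho)\,q_{-c}(\rho)\in[1,2)$; together with $p_{5c}(\rho)\in[1/8,1/2]$ this gives $p_{6c}(\rho)=2\,p_{5c}(\rho)\,(1+\rho)\,q_{-c}(\rho)\in[1/8,2]$.

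Every step above is bookkeeping, so the only real care needed is with the sign conventions (because $c\le0$: $|c|=-c$, and $1\pm c\rho$, $1\pm c\rho^2$, $1+c\rho^2$ are all positive since $|c|\rho,|c|\rho^2<1$) and with tracking which factor of $\kappa(1-c\rho)=(1+\rho)(1+c\rho)\Ts$ cancels where. I expect the $\hat{J}(\hat{\lambda})$ computation to be the fussiest, since $p_{6c}$ is defined compositionally through $p_{5c}$ and $q_{-c}$ and must first be rewritten as a single fraction before the cancellation becomes transparent.
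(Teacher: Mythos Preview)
Your proposal is correct and follows essentially the same route as the paper's proof: the paper also reduces to the two scalar identities $\hat{J}(\mf)=\sigma_w^2 p_{5c}(\rho)(1+1/\kappa)\Ts$ and $\hat{J}(\hat{\lambda})=\sigma_w^2 p_{6c}(\rho)\Ts/\kappa$, invokes Corollary~\ref{cor.JHeavyBallLike}, and then notes the ranges of $p_{5c},p_{6c}$ follow from $c\in[-1,0]$, $\rho\in(0,1)$. You have simply spelled out the verification (via the rearranged identity $\kappa(1-c\rho)=(1+\rho)(1+c\rho)\Ts$ from~\eqref{eq.scalar_c_HeavyballLike}) that the paper leaves as ``straightforward.''
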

	\vsp
	\begin{proof}
		See Appendix~\ref{app.Tuning}.
	\end{proof}
	\vsp
	
	The upper bounds in Theorems~\ref{thm.mainResultHBLike} and~\ref{thm.LowerBoundSettlingTimeBased} follow from Propositions~\ref{prop.JHeavyBallLikeBounds} and~\ref{prop.HBlikeNegative}, respectively. Since these upper bounds have the same scaling as the corresponding lower bounds in Theorems~\ref{thm.LowerJ} and~\ref{thm.LowerBoundSettlingTimeBased} that hold for all stabilizing parameters ($\alpha,\beta,\gamma$), this demonstrates tightness of lower bounds for all settling times and for both noise models.
	
		\vspace*{-2ex}
	\subsection{Parameterized family of Nesterov-like methods}
	
	For the two-step momentum algorithm~\eqref{alg.TM_disc} with $\gamma=\beta$, the line segment $(b(\lambda),a(\lambda))$ parameterized by $\lambda\in[\mf,\Lf]$  passes through the origin. As described in Section~\ref{sec.modalDecomp}, gradient descent and Nesterov's method  with the optimal parameters provided in Table~\ref{tab:rates} are obtained for $a=0$ and $a= - (\rho/2) b$, respectively, and the corresponding end points $(b(\mf),a(\mf))$ and $(b(\Lf),a(\Lf))$ lie on the edges $X_\rho Z_\rho$ and $Y_\rho Z_\rho$ of the $\rho$-linear convergence triangle $\Delta_\rho$. To provide a continuous transformation between these two standard algorithms, we introduce a parameter $c\in[0,1/2]$, and let  the line segment satisfy $a (\lambda) = - c\rho b (\lambda)$ and take its end points at the edges  $X_\rho Z_\rho$ and $Y_\rho Z_\rho$; see Figure~\ref{fig.GDNAHB-structure} for an illustration. This can be accomplished with the following choice of parameters,
	\be\label{eq.parametersNesterovLike}
	\ba{rcl}
	\alpha
	&\!\!\!=\!\!\!&
	(1\,+\,\rho)(1\,+\,c\,-\,c\rho)/
	( \Lf(1\,+\,c) )
	\\[0.15 cm]
	\gamma
	&\!\!\!=\!\!\!&
	\beta
	\,=\,
	{c\rho^2}/
	( (\alpha\Lf\,-\,1)(1\,+\,c) ).
	\ea
	\ee
	For the parameterized family of Nesterov-like algorithms~\eqref{eq.parametersNesterovLike}, Proposition~\ref{prop.JNesterovLikeBounds} establishes the settling time and characterizes the dependence of $J_{\min}\times \Ts$ and $J_{\max}\times\Ts$ on the condition number $\kappa$ and the problem size $n$. 
	
	\vsp
	
	\begin{myprop}
		\label{prop.JNesterovLikeBounds}
		For the class of functions $\mathcal{Q}_{\mf}^{\Lf}$ with the condition number $\kappa=\Lf/\mf$, let the scalar $\rho$ be such that   
		\[
		\Ts\;=\;1/(1-\rho)\;\in\;[(\sqrt{3\kappa+1})/2,(\kappa+1)/2].
		\]
		The two-step momentum algorithm~\eqref{alg.TM_disc} with parameters~\eqref{eq.parametersNesterovLike}
		achieves the convergence rate $\rho$ and 
		satisfies
		\be
		\ba{rcl}
		J_{\max}\times \Ts
		&\!\!\! \ge \!\!\!\!&
		\sigma_w^2
		\left((n-1) \kappa( \kappa \,+\, 1)/32 
		\,+\, 
		\sqrt{3\kappa+1}/2
		\right)
		\\[0.1cm]
		J_{\max}\times \Ts
		&\!\!\! \le \!\!\!\!&
		6 \,\sigma_w^2
		n \, \kappa(3 \kappa \,+\, 1)
		\\[0.1cm]
		J_{\min}\times \Ts
		&\!\!\! \ge \!\!\!\!&
		\sigma_w^2\left( \kappa(\kappa \,+\, 1)/32 
		+
		(n
		-
		1)
		\sqrt{3\kappa+1}/2
		\right)
		\\[0.1cm]
		J_{\min}\times \Ts
		&\!\!\!\le\!\!\!\!&
		\sigma_w^2\left(6\,\kappa(3\kappa \,+\, 1) 
		+
		(n
		-
		1)
		(\kappa+1)/2
		\right)
		\ea
		\non
		\ee
		where $J_{\max}$ and $J_{\min}$ are the largest and smallest values that $J$ can take when the algorithm is applied to $f \in \mathcal{Q}_{\mf}^{\Lf}$ with the condition number $\kappa=\Lf/\mf$, and the scalar $c\in[0,1/2]$ is the solution to the quadratic equation
		\begin{align*}
			\kappa (1-\rho)(1 - c \rho - c^2(1+\rho))
			=
			(1+\rho)(1 - c \rho - c^2(1-\rho)).
		\end{align*}
	\end{myprop}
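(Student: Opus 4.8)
The plan is to reuse the geometric template from the heavy-ball-like family, now exploiting that the choice $\gamma=\beta$ forces the point $(b(\lambda),a(\lambda))$ onto a line through the origin.

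\emph{Convergence rate and the quadratic for $c$.} From the affine relation~\eqref{eq.ab-affine}, $\gamma=\beta$ gives $(1+\beta)a(\lambda)+\beta\,b(\lambda)=0$, and substituting~\eqref{eq.parametersNesterovLike} shows this line is $a(\lambda)=-c\rho\,b(\lambda)$. Intersecting it with the edge $X_\rho Z_\rho$ (the line $a=-\rho b-\rho^2$) and the edge $Y_\rho Z_\rho$ (the line $a=\rho b-\rho^2$) of the triangle $\Delta_\rho$ produces the candidate endpoints $b(\mf)=-\rho/(1-c)$, $a(\mf)=c\rho^2/(1-c)$ and $b(\Lf)=\rho/(1+c)$, $a(\Lf)=-c\rho^2/(1+c)$; for $c\in[0,1/2]$ one has $b(\mf)\in[-2\rho,-\rho]$ and $b(\Lf)\in[2\rho/3,\rho]$, so both lie in $\Delta_\rho$, and Lemma~\ref{lem.expstabilityAlgorithm}(2) then yields rate $\rho$. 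It remains to choose $\alpha,\beta$ so that these endpoints are actually attained at $\lambda=\mf,\Lf$: using $d(\lambda)=\alpha\lambda$ from~\eqref{eq.dDef} together with $d(\mf)=(1-\rho)(1-c-c\rho)/(1-c)$ and $d(\Lf)=(1+\rho)(1+c-c\rho)/(1+c)$ read off from the endpoints, the requirement $d(\Lf)/d(\mf)=\kappa$ simplifies, after factoring $(1+c-c\rho)(1-c)=1-c\rho-c^2(1-\rho)$ and $(1-c-c\rho)(1+c)=1-c\rho-c^2(1+\rho)$, to the stated quadratic in $c$. Setting $c=0$ recovers $\kappa=(1+\rho)/(1-\rho)$ (gradient descent) and $c=1/2$ recovers~\eqref{eq.optimalRateNesProof} (Nesterov); since the ratio on the right is monotone in $c$ on $[0,1/2]$ (the denominator $1-c\rho-c^2(1+\rho)$ stays positive and decreases faster than the numerator), there is a unique such $c$ precisely when $\Ts=1/(1-\rho)\in[\sqrt{3\kappa+1}/2,(\kappa+1)/2]$, and $\alpha,\beta=\gamma$ are then recovered from $\alpha\Lf=d(\Lf)$ and $\beta/(1+\beta)=c\rho$, which is exactly~\eqref{eq.parametersNesterovLike}.

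\emph{Modal contributions.} Invoke Theorem~\ref{thm.varianceJhat}: $\hat J(\lambda)=\sigma_w^2(d+l)/(2dhl)$ with $d,h,l$ as in~\eqref{eq.dhlDefinition}. Parametrizing the segment by $b=b(\lambda)\in[b(\mf),b(\Lf)]$ gives $d=1+(1-c\rho)b$, $h=1-a=1+c\rho b$, $l=1-(1+c\rho)b$ and $d+l=2(1+a)=2(1-c\rho b)$, so $\hat J(b)=\sigma_w^2(1-c\rho b)\big/\big((1+c\rho b)(1-2c\rho b-(1-c^2\rho^2)b^2)\big)$, an explicit rational function on a closed interval. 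Its endpoint values are obtained in closed form — at $b=b(\Lf)$ the point lies on $Y_\rho Z_\rho$, hence $l(\Lf)=(1-\rho)(1+\rho-b(\Lf))$, and symmetrically $d(\mf)=(1-\rho)(1+\rho+b(\mf))$ — and one locates $\hat J_{\max}$ and $\hat J_{\min}$ over the interval. Because $c\in[0,1/2]$ one has uniform bounds $h\in[(1-\rho)(1+\rho),\,3/2]$ and $d,l$ bounded below on the relevant edges by $(1-\rho)\cdot\Theta(1)$, which is enough to sandwich $\hat J$: $\hat J_{\max}$ is (within a constant factor of) $\hat J(\mf)$ and $\hat J(\Lf)$ and scales as $\sigma_w^2\kappa^2/\Ts$, while $\hat J_{\min}$, attained at an interior point where $\alpha\lambda\approx1$, is $\Theta(\sigma_w^2)$.

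\emph{Assembly and the main difficulty.} Feeding $\hat J(\mf),\hat J(\Lf),\hat J_{\max},\hat J_{\min}$ into the expressions~\eqref{eq.JMinMaxExpressions} for $J_{\max}$ and $J_{\min}$, multiplying by $\Ts=1/(1-\rho)$, and bounding each term with $c\in[0,1/2]$ and $\Ts\in[\sqrt{3\kappa+1}/2,(\kappa+1)/2]$ gives the claimed inequalities: the $(n-2)\hat J_{\max}\Ts$ term supplies the $\Theta(n\kappa^2)$ pieces, $(\hat J(\mf)+\hat J(\Lf))\Ts$ together with the smallest admissible $\Ts$ supplies the additive $\sqrt{3\kappa+1}/2$ term in the lower bounds, and $(n-2)\hat J_{\min}\Ts\le(n-1)\Ts\le(n-1)(\kappa+1)/2$ supplies the corresponding term in the upper bound on $J_{\min}$; the gradient-noise versions follow by the same steps with $\sigma_w=\alpha\sigma$ and $\alpha\Lf=\Theta(1)$. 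The main obstacle is the second step: since the segment is tilted all three of $d,h,l$ vary with $\lambda$, so $\hat J(\lambda)$ has a cubic denominator rather than the quadratic-in-$b$ denominator of the horizontal heavy-ball segment, and pinning down — or tightly enough bounding — its extremizer over $[\mf,\Lf]$ uniformly in $c\in[0,1/2]$ and $\rho\in(0,1)$ is the delicate computation; everything else is bookkeeping driven by the geometry of $\Delta_\rho$.
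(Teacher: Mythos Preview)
Your overall plan matches the paper's: parametrize the Nesterov-like segment by $c\in[0,1/2]$, place the endpoints on $X_\rho Z_\rho$ and $Y_\rho Z_\rho$, derive the quadratic for $c$ from $d(\Lf)/d(\mf)=\kappa$, compute $\hat J$ at the relevant points, and assemble via~\eqref{eq.JMinMaxExpressions}. The first and third steps are essentially identical to the paper's argument.

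Where you diverge is in the second step, and the ``main obstacle'' you flag is in fact not an obstacle. You write $\hat J(b)$ as an explicit rational function and propose to locate its extremizers over the interval ``uniformly in $c$ and $\rho$'', calling this the delicate computation. The paper sidesteps this entirely using two facts you have available but do not invoke: (i) $\hat J$ is convex in $(b,a)$ over the stability triangle (Appendix~\ref{app.convexity}), hence convex in $\lambda$ along any affine segment, so $\hat J_{\max}$ is automatically attained at an endpoint --- one then checks directly that $\hat J(\mf)\ge\hat J(\Lf)$; and (ii) the Nesterov line $a=-c\rho b$ passes through the origin, which is the \emph{global} minimizer of $\hat J$ over $\Delta$ with value $\sigma_w^2$. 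Since $b(\lambda)=0$ exactly at $\lambda=1/\alpha\in[\mf,\Lf]$, this gives $\hat J_{\min}=\hat J(1/\alpha)=\sigma_w^2$ \emph{exactly}, not merely $\Theta(\sigma_w^2)$ at some approximate interior point. With these two observations the cubic-denominator optimization you anticipate never arises; the modal extremes are pinned down without any calculus on the rational function, and the bounds on $\hat J_{\max}\times\Ts$ follow from the closed-form endpoint values together with elementary inequalities on $h(\mf)$, $l(\mf)$, and $d(\mf)$.
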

	\begin{proof}
	See Appendix~D in supplementary material.
	\end{proof}
	\vsp
	Since $\alpha$ in~\eqref{eq.parametersNesterovLike} satisfies $\alpha\Lf \in [1, 3]$, 
	comparing the upper bounds in Proposition~\ref{prop.JNesterovLikeBounds} with the lower bounds in Theorem~\ref{thm.LowerJ} shows that, for settling times $\Ts\le(\kappa+1)/2$,~\eqref{eq.parametersNesterovLike} achieves order-wise optimal $J_{\max}$ and $J_{\min}$ for both the iterate $(\sigma_w=\sigma)$ and gradient $(\sigma_w=\alpha\sigma)$ noise models.
	
	\begin{figure}[t]
		\centering
		\begin{tabular}{c}
			\begin{tabular}{c}
				\resizebox{6.5 cm}{!}{
					\begin{tikzpicture}
						\node[] (pic) at (0,0) {\includegraphics[width=70mm]{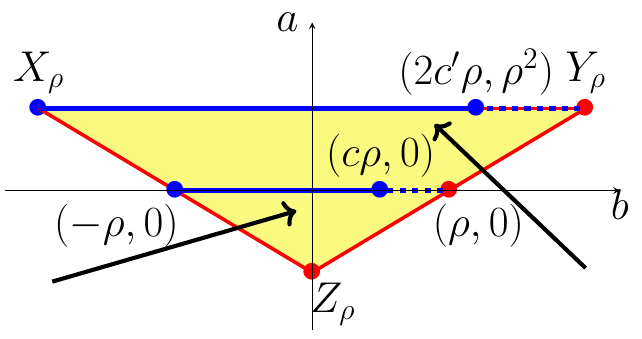}};
						\node[name=gdparam] at (-3,-1.7)
						{
							\color{black}
							\large Parameters~\eqref{eq.gradientDescentReducedAlpha}
						};
						
						\node[name=hbparam] at (3,-1.7)
						{
							\color{black}
							\large Parameters~\eqref{eq.parametersRHB}
						};
					\end{tikzpicture}
				}
			\end{tabular}
		\end{tabular}
		\vspace*{-0.25cm}
		\caption{The triangle $\Delta_\rho$ (yellow) and the line segments $(b(\lambda),a(\lambda))$ with $\lambda\in[\mf,\Lf]$ (blue) for gradient descent with reduced stepsize~\eqref{eq.gradientDescentReducedAlpha} and heavy-ball-like method~\eqref{eq.parametersRHB}, which place the end point $(b(\mf),a(\mf))$ at  $X_\rho$ and the end point $(b(\Lf),a(\Lf))$ at $(2c'\rho,\rho^2)$ on the edge $X_\rho Y_\rho$, where $c'\DefinedAs\kappa(1-\rho)^2/\rho -(1+\rho^2)/\rho$ ranges over the interval $[-1,1]$.}
		\vspace*{-0.5cm}
		\label{fig.TunningReducedAlpha-structure}
	\end{figure}
	
	\vspace*{-2ex}
	\subsection{Impact of reducing the stepsize}
	
	When the only source of uncertainty is a noisy gradient, i.e., $\sigma_w=\alpha\sigma$, one can attempt to reduce the noise amplification $J$ by decreasing the stepsize $\alpha$ at the expense of increasing the settling time $\Ts=1/(1-\rho)$~\cite{yuayinsay16,fazribmor18,aybfalgurozd19b,vanles21}. In particular, for gradient descent, $\alpha$ can be reduced from its optimal value $2/(\Lf+\mf)$ by keeping $(b(\mf),a(\mf))$ at $(-\rho, 0)$ and moving the point $(b(\Lf),a(\Lf))$ from $(\rho,0)$ towards $(-\rho, 0)$ along the horizontal axis; see Figure~\ref{fig.TunningReducedAlpha-structure}. This can be accomplished with 
	\begin{align}\label{eq.gradientDescentReducedAlpha}
		\alpha\;=\; (1\,+\,c\rho)/\Lf,\quad \gamma\;=\;\beta\;=\;0 
	\end{align} 
	for some $c\in[-1,1]$ parameterizing $(b(\Lf),a(\Lf))=(c\rho,0)$. In this case, the settling time satisfies $\Ts=(\kappa+c)/(c+1)\in[(\kappa+1/2),\infty)$ and similar arguments to those presented in the proof of Lemma~\ref{lem.individualJhatHeavyBallLike} can be used to obtain
	\begin{align*}
		\hat{J}_{\max}
		&\;=\;
		\hat{J}(\mf)
		\,=\,
		\sigma^2 \kappa^2(1-\rho)/{\Lf^2}
	\end{align*}
	\begin{align*}
		\hat{J}_{\min}
		&\,=\,
		\left\{
		\ba{rcll}
		\hat{J}(\Lf)
		&\!\!\!=\!\!\!&
		{\sigma^2 \alpha^2}/({1-c^2\rho^2})
		& c\;\le\; 0
		\\[0.1cm]
		\hat{J}(1/\alpha)
		&\!\!\!=\!\!\!&
		\sigma^2\alpha^2
		& c\;\ge\; 0.
		\ea
		\right.
	\end{align*}
	For a fixed $n$, the stepsize in~\eqref{eq.gradientDescentReducedAlpha} yields a $\Theta(\kappa^2)$ scaling for both $J_{\max}\times\Ts$ and  $J_{\min}\times\Ts$ for all $c\in[-1,1]$. Thus, gradient descent with reduced stepsize order-wise matches the lower bounds in Theorem~\ref{thm.LowerJ}. An IQC-based approach~\cite[Lemma~1]{mohrazjovTAC21} was utilized in~\cite[Theorem~13]{vanles21} to show that $\alpha$ in~\eqref{eq.gradientDescentReducedAlpha} also yields the above discussed convergence rate and worst-case noise amplification for one-point $m$-strongly convex $\Lf$-smooth functions.
	
	\begin{myrem}
		Any desired settling time $\Ts=1/(1-\rho)\in[(\sqrt{\kappa}+1)/2,\infty)$ can be achieved by the heavy-ball-like method with reduced stepsize,
		\begin{align}\label{eq.parametersRHB}
			\alpha\;=\;(1-\rho)^2/\mf, \quad \beta\;=\;\rho^2,\quad \gamma\;=\;0.
		\end{align} 
		This choice yields $J_{\max}=\sigma^2 n\kappa^2(1-\rho^4)/(L^2 (1+\rho)^4)$ for the gradient noise model $\sigma_w=\alpha\sigma$~\cite[Theorem~9]{vanles21}; see  Figure~\ref{fig.TunningReducedAlpha-structure}. In addition, by considering the error in $y^t=x^t+\gamma(x^t-x^{t-1})$ as the performance metric, it was stated and numerically verified in~\cite{vanles21} that the choice of parameters~\eqref{eq.parametersRHB} yields Pareto-optimal algorithms for simultaneously optimizing $J_{\max}$ and $\rho$ for the gradient noise model $\sigma_w=\alpha\sigma$. We note that the settling time $\Ts=\Theta(\kappa)$ of gradient descent with standard stepsizes ($\alpha=1/\Lf$ or $2/(\mf+\Lf)$) can be achieved via~\eqref{eq.parametersRHB} by reducing $\alpha$ to $O(1/(\kappa \Lf))$. In contrast, the parameterized family of heavy-ball-like methods~\eqref{eq.parametersHeavy-ballLike} is order-wise Pareto-optimal (cf.~Theorems~\ref{thm.LowerJ}-\ref{thm.LowerBoundSettlingTimeBased}) while maintaining $\alpha \in [1/\Lf,4/\Lf]$.
	\end{myrem}

\begin{figure}[t]
	\centering
	\begin{tabular}{c}
		\begin{tabular}{c}
			\resizebox{5.0 cm}{!}{
				\begin{tikzpicture}
					\node[] (pic) at (0,0) {\includegraphics[width=70mm]{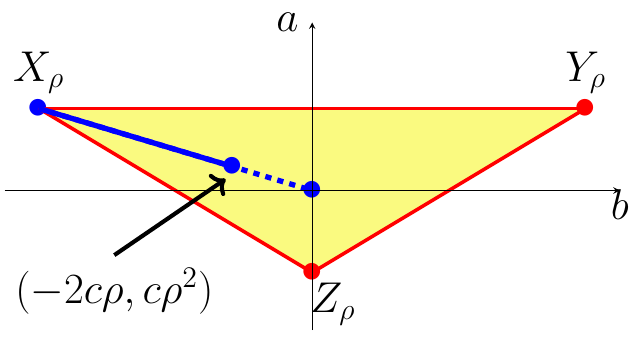}};
				\end{tikzpicture}
			}
		\end{tabular}
	\end{tabular}
	\vspace*{-0.25cm}
	\caption{The triangle $\Delta_\rho$ (yellow) and the line segments $(b(\lambda),a(\lambda))$ with $\lambda\in[\mf,\Lf]$ (blue) for the Nesterov-like method with reduced stepsize~\eqref{eq.gradientDescentReducedAlpha}, which places the end point $(b(\mf),a(\mf))$ at  $X_\rho$ and the end point $(b(\Lf),a(\Lf))$ at $(-2c\rho,c\rho^2)$, where $c\DefinedAs\kappa-1/(1-\rho)^2$ ranges over the interval $[0,1]$.}
	\vspace*{-0.5cm}
	\label{fig.TunningMultiStageLocus}
\end{figure}

\begin{figure}[t]
	\centering
	\begin{tabular}{c}
		\resizebox{4.2 cm}{!}{
			\begin{tikzpicture}
				\node[] (pic) at (0,0) {\includegraphics[]{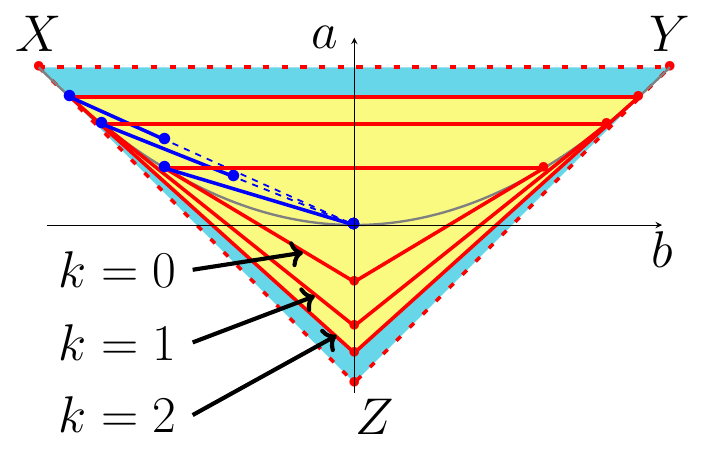}};
			\end{tikzpicture}
		}
	\end{tabular}
	\vspace*{-0.25cm}
	\caption{The exponential stability triangles $\Delta_{\rho_k}$ along with the line segments $(b_k(\lambda),a_k(\lambda))$ with $\lambda\in[\mf,\Lf]$ associated with  the first three stages $k=0,1,2$ of M-ASG. The parabola $b^2=4a$ determines the locus of vertices $X_\rho$ and $Y_\rho$.}
	\vspace*{-0.5cm}
	\label{fig.TunningMASGStagesLocus}
\end{figure}

\begin{myrem}{
		Any desired settling time $\Ts=1/(1-\rho)\in[\sqrt{\kappa},\infty)$ can be achieved by the Nesterov-like method with reduced stepsize,
		\begin{align}\label{eq.parametersRNA}
			\alpha\;=\;(1-\rho)^2/\mf, \quad \beta\;=\;\gamma\;=\;\dfrac{\rho}{2\,-\,\rho}.
		\end{align} 
		This choice makes the line segment $(b(\lambda),a(\lambda))$ for $\lambda\in[\mf,\Lf]$ pass through the origin with the endpoint $(b(\mf),a(\mf))$ at the vertex $X_\rho=(-2\rho,\rho^2)$ of the $\rho$-exponential stability triangle $\Delta_\rho$, and it yields  $(b(\Lf),a(\Lf))=(-2c\rho,c\rho^2)$, where $c\DefinedAs\kappa-T_s^2$ is a non-negative constant; see Figure~\ref{fig.TunningMultiStageLocus}. 
		For the gradient noise model $\sigma_w = \alpha\sigma$, these parameters lead to
		\begin{align}\label{eq.JhatmaxRNA}
			\hat{J}_{\max}
			\;=\;
			\hat{J}(\mf)
			\;=\;
			\dfrac{\sigma^2 \kappa^2(1\,+\,\rho^2)}{L^2(1\,+\,\rho)^3 T_s}
			\;\approxeq\;
			\dfrac{\sigma^2 \kappa^2 }{4 \Lf^2 T_s} 
		\end{align}
		We note that, as $\rho\rightarrow1$, the largest modal contribution to noise amplification $\hat{J}_{\max}$ becomes inversely proportional to the settling time $T_s$. The family of parameters in~\eqref{eq.parametersRNA} were utilized in~\cite{aybfalgurozd19b} to propose the Multistage Accelerated Stochastic Gradient (M-ASG) algorithm as a means to systematically tradeoff convergence rate and noise amplification. For strongly convex problems, this algorithm optimally reduces the error in function values, thereby matching the fundamental lower bound established in~\cite{nemyud83}. In particular, at every stage $k\in\{0,1,\ldots\}$, \mbox{M-ASG} performs a specific restart that balances the initial condition followed by $N_k$ Nesterov-like-iterations with 
		\begin{align}\label{eq.multistage-parameters}
			\alpha_k \;\DefinedAs\;\dfrac{1}{4^k \Lf},~~ \beta_k\;=\;\gamma_k\;\DefinedAs\;\dfrac{1 \, - \, \sqrt{\alpha_k m}}{1 \, + \,\sqrt{\alpha_k m}}
		\end{align} 
		where $N_k$ is proportional to $2^k/\kappa$, $\mf$ is the parameter of strong convexity, and $\Lf$ is the Lipschitz constant. When restricted to strongly convex quadratic problems, the parameters in~\eqref{eq.multistage-parameters} are identical to those in~\eqref{eq.parametersRNA} with the convergence rate
		$
		\rho_k
		=
		1-{1}/{(2^k\sqrt{\kappa})}
		$; see Figure~\ref{fig.TunningMASGStagesLocus}.
	It is straightforward to show that while M-ASG reduces the largest contribution to noise amplification $\hat{J}(\mf)$ to half by going to the next stage, it also doubles the settling time. 
		Finally, contrasting~\eqref{eq.JhatmaxRNA} with the lower bounds in~\eqref{eq.lowerboundJhatMGradientNoise} and~\eqref{eq.thmLowerJMax_withAlpha}, established in Theorem~\ref{thm.LowerBoundSettlingTimeBased}, allows us to conclude that M-ASG preserves $J\times T_s$ near the Pareto-optimal curve at each stage while achieving the optimal iteration complexity~\cite{nemyud83} by successively reducing the stepsize to half of its previous value and utilizing a suitable iteration count~$N_k$.		
		}
\end{myrem}

		\vspace*{-2ex}
	\section{Proofs of Theorems~\ref{thm.mainUpperbound}-\ref{thm.mainResultHBLike}}
	\label{sec.mainResultProof}
	
	\subsection{Proof of Theorem~\ref{thm.mainUpperbound}}
	
	From Theorem~\ref{thm.varianceJhat} it follows that we can use upper bounds on $\hat{J}(\lambda)$ over $\lambda\in[\mf,\Lf]$ to establish an upper bound on $J$. Since the algorithm achieves the convergence rate $\rho$, combining equation~\eqref{eq.rateDecompos} and Lemma~\ref{lem.expstability} yield
	$
	(b(\lambda),a(\lambda))
	\in
	\Delta_\rho
	$
	for all $\lambda\in[\mf,\Lf]$.  
	As we demonstrate in Appendix~\ref{app.convexity}, the function $\hat{J}$ is convex in $(b,a)$ over the stability triangle $\Delta$. In addition, $\Delta_\rho\subset \Delta$ is the convex hull of the points $X_\rho$, $Y_\rho$, $Z_\rho$ in the ($b,a$)-plane. Since the maximum of a convex function over the convex hull of a finite set of points is attained at one of these points, 
	$\hat{J}$ attains its maximum over $\Delta_\rho$ at $X_\rho$, $Y_\rho$, or $Z_\rho$. 
	
	Using the definition of $X_\rho$, $Y_\rho$, and $Z_\rho$ in~\eqref{eq.VerticesDeltaRho}, the affine relations~\eqref{eq.dhlDefinition}, and the analytical expression for $\hat{J}$ in Theorem~\ref{thm.varianceJhat}, it follows that the maximum occurs at vertices $X_\rho$ and $Y_\rho$,
	\[
	\hat{J}_{\max}
	\;\DefinedAs\;
	\max_{\lambda\in[\mf,\Lf]} \hat{J}(\lambda)
	\;=\;
	\sigma_w^2(1+\rho^2)/\left((1-\rho)^3(1+\rho)^3\right)
	\]
	where we use
	$
	d_{X_\rho}=l_{Y_\rho}=(1-\rho)^2
	$, 
	$
	l_{X_\rho}=d_{Y_\rho}=(1+\rho)^2
	$, 
	and
	$
	h_{X_\rho}=h_{Y_\rho}=1-\rho^2.
	$
	Combining the above identity with Theorem~\ref{thm.varianceJhat} completes the proof of~\eqref{eq.mainUpperBoundsigma1}. 
	
	We use an argument similar to the proof of Proposition~\ref{prop.recoverNesLowerbound} to prove~\eqref{eq.mainUpperBoundsigmaalpha}. In particular, since $(b(\Lf),a(\Lf))\in\Delta_\rho$, we have
\begin{align*}
	\alpha L
	\;=\;
	d(\Lf)
	\;\le\;
	d_{\max}
	\;=\;
	(1 \, + \, \rho)^2
\end{align*}
where $d$ given by~\eqref{eq.dDef} is the horizontal signed distance to the edge $XZ$ of the stability triangle $\Delta$. On the other hand, $d_{\max}$ is the largest value that $d$ can take among all points $(b,a)\in\Delta_\rho$ and it corresponds to the vertex $Y_\rho$; see equation~\eqref{eq.Dmax}. 
Combining this inequality with $\sigma_w=\alpha\sigma$ and~\eqref{eq.mainUpperBoundsigma1} completes the proof of Theorem~\ref{thm.mainUpperbound}.

		\vspace*{-2ex}
	\subsection{Proof of Theorem~\ref{thm.LowerJ}}
	Using the expression $J=\sum_i \hat{J}(\lambda_i)$ established in Theorem~\ref{thm.varianceJhat}, we have  the decomposition
	\begin{align}\label{eq.decompositionProof}
		J
		&\;=\;
		\hat{J}(\mf) \, + \, \sum_{i \, = \, 1}^{n-1}\hat{J}(\lambda_i).
	\end{align} 
	To prove the lower bounds~\eqref{eq.thmLowerJMin} and~\eqref{eq.thmLowerJMin_withAlpha} on $J_{\min}$, we establish a lower bound on $\hat{J}(\mf)\times\Ts$ that scales quadratically with $\kappa$, and a general lower bound on $\hat{J}(\lambda)\times \Ts$. 
	
	\subsubsection*{Case $\sigma_w=\sigma$}
	
	The proof of~\eqref{eq.thmLowerJMin} utilizes the inequalities  
	\begin{subequations}\label{eq.lowerboundJhat}
		\begin{align}\label{eq.lowerboundJhatM}
			\hat{J}(\mf)\times\Ts
			&\; \ge \;
			\sigma_w^2\kappa^2/(2(1 \, + \, \rho)^5)
			\\[0.1cm]\label{eq.lowerboundJhatGeneral}
			\hat{J}(\lambda)\times\Ts
			&\; \ge \;
			\sigma_w^2(\sqrt{\kappa}\,+\,1)/2.
		\end{align}
	\end{subequations}
	
	We first prove~\eqref{eq.lowerboundJhatM}. Our approach builds on the proof of Proposition~\ref{prop.recoverNesLowerbound}. In particular, $d(\lambda)=\alpha\lambda$ for the point $(b(\lambda),a(\lambda))$, where $d$ and $(b,a)$ are  defined in~\eqref{eq.dhlDefinition} and~\eqref{eq.ab}, respectively. Thus,
	$
	d(\mf)
	=
	d(\Lf)/\kappa.
	$
	Furthermore, Lemma~\ref{lem.expstabilityAlgorithm} implies $(b(\lambda),a(\lambda))\in\Delta_\rho$ for $\lambda\in[\mf,\Lf]$. Thus, the trivial inequality $d(\Lf)\le d_{\max}$ leads to
	\begin{align}\label{eq.lowerBoundD}
		d(\mf)
		\;\le\;
		{d_{\max}}/{\kappa}
		\;=\;
		{(1 \, + \, \rho)^2}/{\kappa}
	\end{align}
	where $d_{\max}=(1+\rho)^2$ is the largest value that $d$ can take among all points $(b,a)\in\Delta_\rho$; see equation~\eqref{eq.Dmax}. 
	We now use Theorem~\ref{thm.varianceJhat}  to write
	\begin{align}\label{eq.lowerBoundNoiseProof}
		\hat{J}(\lambda)
		&\;=\;
		\dfrac{\sigma_w^2(d(\lambda)\,+\,l(\lambda))}{2 d(\lambda) h(\lambda) l(\lambda)}
		\;\ge\;
		\dfrac{\sigma_w^2}{2 d(\lambda) h(\lambda)}.
	\end{align}
Next, we lower bound the right-hand side of~\eqref{eq.lowerBoundNoiseProof}. Let $\mathcal{L}$ be the line that passes through $(b(\lambda),a(\lambda))$ which is parallel to the edge $XZ$ of the stability triangle $\Delta$, and let $G$ be the intersection of $\mathcal{L}$ and the edge $X_\rho Z_\rho$ of the $\rho$-stability triangle $\Delta_\rho$; see Figure~\ref{fig.proofLowerbound} for an illustration. It is easy to verify that
	\begin{subequations}\label{eq.geomArgHelper1}
		\begin{align}\label{eq.geomArgHelper1Sub1}
			h_G
			\;\ge\;
			h(\lambda),\quad 
			d_G
			\;=\;
			d(\lambda)
		\end{align}
		where $h_G$ and $d_G$ correspond to the values of $h$ and $d$ associated with the point $G$.
		In addition, since $G$ lies on the edge $X_\rho Z_\rho$, $h_G$ and $d_G$ satisfy the affine relation
		\begin{align}\label{eq.geomArgHelper1Sub2}
			h_G
			\;=\;
			1\,-\,\rho
			\,+\,
			d_G \rho /({1\,-\,\rho}).
	\end{align}\end{subequations}
	This follows from the equation of the line $X_\rho Z_\rho$ in the ($b,a$)-plane and from the definitions of $d$ and $h$ in~\eqref{eq.dhlDefinition}.
Furthermore, combining~\eqref{eq.geomArgHelper1Sub1} and~\eqref{eq.geomArgHelper1Sub2} implies
	\begin{subequations}\label{eq.cTempFull}
\begin{align}\label{eq.cTemp}
	\hspace{-0.1 cm}
	h(\lambda)(1\,-\,\rho)
	\;\le\;
	h_G(1\,-\,\rho)
	\;=\;
	(1\,-\,\rho)^2
	\,+\,
	\rho\, d(\lambda).
	\hspace{-0.1 cm}
\end{align}
		For $\lambda=\mf$, we can further write
	\begin{align}\label{eq.cTempTemp}
	2\,d(\mf) \left((1\,-\,\rho)^2
		\,+\,
		\rho\, d(\mf)\right) 
	\;\le\;
	2\,(1+\rho)^5/\kappa^2
\end{align}
	\end{subequations}
	where the inequality is obtained from~\eqref{eq.boudnKappaGeneral} and~\eqref{eq.lowerBoundD}. Combining~\eqref{eq.lowerBoundNoiseProof},~\eqref{eq.cTemp}, and~\eqref{eq.cTempTemp} completes the proof of~\eqref{eq.lowerboundJhatM}.
		
	\begin{figure}[h]
		\begin{center}
			\begin{tabular}{c}
				\resizebox{5.3 cm}{!}{
					\begin{tikzpicture}
						\node[] (pic) at (0,0) {\includegraphics[]{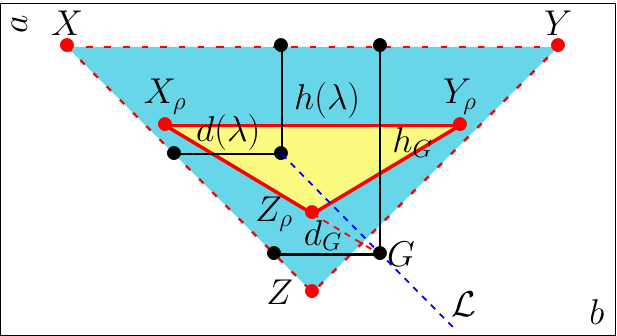}};
					\end{tikzpicture}
				}
			\end{tabular}
		\end{center}
				\vspace*{-0.3cm}
		\caption{The line $\mathcal{L}$ (blue, dashed) and the intersection point $G$, along with the distances $d_1$, $h_1$, $d_G$, and $h_G$ as introduced in the proof of Theorem~\ref{thm.LowerJ}.}
		\vspace*{-0.25cm}
		\label{fig.proofLowerbound}
	\end{figure}
	
	Next, we prove the general lower bound in~\eqref{eq.lowerboundJhatGeneral}.
	As we demonstrate in Appendix~\ref{app.convexity}, the modal contribution $\hat{J}$ to the noise amplification is a convex function of $(b,a)$ which takes its minimum $\hat{J}_{\min}=\sigma_w^2$ over the stability triangle $\Delta$ at the origin $b=a=0$. Combining this fact with the lower bound in~\eqref{eq.NesFunLow} on $\rho$ completes the proof of~\eqref{eq.lowerboundJhatGeneral}.
	
	Finally, we can obtain the lower bound~\eqref{eq.thmLowerJMin} on $J_{\min}$ by combining~\eqref{eq.decompositionProof} and~\eqref{eq.lowerboundJhat}.

	\subsubsection*{Case $\sigma_w=\alpha\sigma$}
	The proof of~\eqref{eq.thmLowerJMin_withAlpha} utilizes the inequalities  
	\begin{subequations}\label{eq.lowerboundJhatGradientNoise}
		\begin{align}\label{eq.lowerboundJhatMGradientNoise}
			\hat{J}(\lambda)\times\Ts
			&\;\ge\;
			\sigma^2/(2 \lambda^2(1\,+\,\rho))
			\\[0.cm]\label{eq.lowerboundJhatGeneralGradientNoise}
			\hat{J}(\lambda)\times\Ts
			&\; \ge \;
			{\sigma^2(1\,-\,\rho)^3\kappa^2}/{L^2}.
		\end{align}
	\end{subequations}
In particular,~\eqref{eq.thmLowerJMin_withAlpha} follows from using~\eqref{eq.lowerboundJhatMGradientNoise} for $\lambda=\mf$ and taking the maximum of~\eqref{eq.lowerboundJhatMGradientNoise} and~\eqref{eq.lowerboundJhatGeneralGradientNoise} for the other eigenvalues to bound the expression for $J$ in Theorem~\ref{thm.varianceJhat}.
	
	We first prove~\eqref{eq.lowerboundJhatMGradientNoise}. By combining~\eqref{eq.lowerBoundNoiseProof} and~\eqref{eq.cTemp}, we obtain
	\begin{align}\label{eq.lowerboundProofTemp3}
		\dfrac{\hat{J}(\lambda)}{1\,-\,\rho}
		&
		\;\ge\;
		\dfrac{\alpha^2\sigma^2}{2\,d(\lambda) \left((1\,-\,\rho)^2
			\,+\,
			\rho\, d(\lambda)\right) }.
	\end{align}
	Since
	$
	d(\lambda)
	\ge
	d_{\min}
	\DefinedAs
	(1 -  \rho)^2,
	$
	where $d_{\min}$ is the smallest value of $d$ over $\Delta_\rho$, [cf.~\eqref{eq.Dmin}], we can write
	\begin{align}\label{eq.dTemp}
		\dfrac{\alpha^2\sigma^2}{ (1\,-\,\rho)^2
			\,+\,
			\rho d(\lambda) }
		\;\ge\;
		\dfrac{\alpha^2\sigma^2}{ d(\lambda)(1\,+\,\rho) }
		\;=\;
		\dfrac{\sigma^2 d(\lambda)}{ \lambda^2(1\,+\,\rho)}.
	\end{align} 
	Combining~\eqref{eq.lowerboundProofTemp3} and~\eqref{eq.dTemp} completes the proof of~\eqref{eq.lowerboundJhatMGradientNoise}.
	
	To prove~\eqref{eq.lowerboundJhatGeneralGradientNoise} we use 
	$
	d(\lambda)
	\ge
	d_{\min}
	\DefinedAs
	(1 -  \rho)^2
	$
	and $d(\mf)=\alpha\mf$, to obtain
	$
	\alpha 
	\ge
	(1-\rho)^2\kappa/\Lf. 
	$
	Combining this inequality with~$\hat{J}_{\min}=\sigma_w^2=\alpha^2\sigma^2$ yields~\eqref{eq.lowerboundJhatGeneralGradientNoise}. Finally, we obtain the lower bound~\eqref{eq.thmLowerJMin_withAlpha} on $J_{\min}$ by combining~\eqref{eq.decompositionProof} and~\eqref{eq.lowerboundJhatGradientNoise}.

	To obtain the lower bounds~\eqref{eq.thmLowerJMax} and~\eqref{eq.thmLowerJMax_withAlpha} on $J_{\max}$, we  consider a quadratic function for which the Hessian has $n-1$ eigenvalues at $\lambda=\mf$ and  one eigenvalue at $\lambda=\Lf$. For such a function, we can use Theorem~\ref{thm.varianceJhat} to write
	\begin{align}\label{eq.decompositionProofWorstCase}
		J_{\max}
		&\;\ge\;
		J
		\;=\;
		(n\,-\,1)\hat{J}(\mf) + \hat{J}(\Lf).
	\end{align}
	\subsubsection*{Case $\sigma_w=\sigma$}
	To prove~\eqref{eq.thmLowerJMax}, we use inequalities in~\eqref{eq.lowerboundJhatM} and~\eqref{eq.lowerboundJhatGeneral} to bound $\hat{J}(\mf)/(1-\rho)$ and $\hat{J}(\Lf)/(1-\rho)$ in~\eqref{eq.decompositionProofWorstCase}, respectively.
	
	\subsubsection*{Case $\sigma_w=\alpha\sigma$}
	To prove~\eqref{eq.thmLowerJMax_withAlpha}, we use inequality in~\eqref{eq.lowerboundJhatMGradientNoise} with $\lambda=\mf$ to lower bound $\hat{J}(\mf)/(1-\rho)$, and combine~\eqref{eq.lowerboundJhatMGradientNoise} and~\eqref{eq.lowerboundJhatGeneralGradientNoise} to lower bound $\hat{J}(\Lf)/(1-\rho)$ in~\eqref{eq.decompositionProofWorstCase}.

	\vspace*{-2ex}
	\subsection{Proof of Theorem~\ref{thm.LowerBoundSettlingTimeBased}}
	\label{sec.Thm4proof}
	
	The following proposition allows us to prove the lower bounds in Theorem~\ref{thm.LowerBoundSettlingTimeBased}.
	\vsp
	\begin{myprop}\label{prop.LowerBoundSettlingTimeBased}
		Let $\rho=\spec(A)=1-1/\Ts$ be the convergence rate of the two-step momentum algorithm~\eqref{alg.TM_disc}. Then, the largest and smallest modal contributions to noise amplification given by~\eqref{eq.JhatMinMaxDetermineJMinMax} satisfy
$
	\hat{J}_{\max}
\ge
	\sigma_w^2 /(2(1\,+\,\rho)^2) \, \Ts
$
and
$
	\hat{J}_{\min}
	\ge
	\sigma_w^2.
$
	\end{myprop}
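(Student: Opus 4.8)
The plan is to prove both bounds directly from the modal-contribution formula of Theorem~\ref{thm.varianceJhat}, which I will rewrite as $\hat{J}(\lambda)=\tfrac{\sigma_w^2}{2}\big(\tfrac{1}{d(\lambda)\,h(\lambda)}+\tfrac{1}{h(\lambda)\,l(\lambda)}\big)$ after noting $d+l=2(1+a)$, combined with the geometry of the triangles $\Delta$ and $\Delta_\rho$ from Lemmas~\ref{lem.stability}--\ref{lem.expstabilityAlgorithm} and the equivalences in~\eqref{eq.equivalence-dhl}. The bound $\hat{J}_{\min}\ge\sigma_w^2$ is the easy half: since $\rho=\spec(A)<1$ the algorithm is stable, so by Lemma~\ref{lem.expstabilityAlgorithm} the segment $\{(b(\lambda),a(\lambda)):\lambda\in[\mf,\Lf]\}$ lies in $\Delta$, and — as established in Appendix~\ref{app.convexity} and already used in the proof of Theorem~\ref{thm.LowerJ} — $\hat{J}$ is convex on $\Delta$ with minimum value $\sigma_w^2$ attained at the origin; hence $\hat{J}(\lambda)\ge\sigma_w^2$ for every $\lambda\in[\mf,\Lf]$, so $\hat{J}_{\min}\ge\sigma_w^2$.

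For $\hat{J}_{\max}$, the first step is to locate a point of the segment on the boundary of $\Delta_\rho$. Since $\spec(A)=\max\{\spec(\hat{A}(\mf)),\spec(\hat{A}(\Lf))\}$ (the affine/convexity argument behind Lemma~\ref{lem.expstabilityAlgorithm} and~\eqref{eq.rateDecompos}), there is an endpoint $\lambda^\star\in\{\mf,\Lf\}$ with $\spec(\hat{A}(\lambda^\star))=\rho$; by Lemma~\ref{lem.expstability} this says $(b(\lambda^\star),a(\lambda^\star))\in\Delta_\rho$ but $\notin\Delta_{\rho'}$ for any $\rho'<\rho$, hence — using $\Delta_{\rho'}\subset\Delta_\rho$ with vertices converging as $\rho'\uparrow\rho$, so that $\bigcup_{\rho'<\rho}\Delta_{\rho'}=\mathrm{int}\,\Delta_\rho$ — the point $(b(\lambda^\star),a(\lambda^\star))$ lies on one of the three edges $X_\rho Y_\rho$, $X_\rho Z_\rho$, $Y_\rho Z_\rho$ of $\Delta_\rho$. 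I then bound $\hat{J}(\lambda^\star)$ on each edge. On $X_\rho Y_\rho$ one has $a=\rho^2$, $h=1-\rho^2$, $d+l=2(1+\rho^2)$, so AM--GM gives $d\,l\le(1+\rho^2)^2$ and therefore $\hat{J}(\lambda^\star)=\tfrac{\sigma_w^2(1+\rho^2)}{(1-\rho^2)\,d\,l}\ge\tfrac{\sigma_w^2}{1-\rho^4}$. On $X_\rho Z_\rho$, parametrizing by $b\in[-2\rho,0]$ gives $d=(1-\rho)(1+\rho+b)$ and $h=1+\rho^2+\rho b$, both nondecreasing in $b$, so $d(\lambda^\star)h(\lambda^\star)\le(1-\rho^2)(1+\rho^2)=1-\rho^4$ and hence $\hat{J}(\lambda^\star)\ge\tfrac{\sigma_w^2}{2\,d(\lambda^\star)h(\lambda^\star)}\ge\tfrac{\sigma_w^2}{2(1-\rho^4)}$; the edge $Y_\rho Z_\rho$ is handled by the reflection $b\mapsto-b$, which fixes $\Delta_\rho$, swaps $d$ and $l$, and leaves $\hat{J}$ unchanged, so it yields the same bound.

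In all three cases $\hat{J}_{\max}\ge\hat{J}(\lambda^\star)\ge\tfrac{\sigma_w^2}{2(1-\rho^4)}$, and since $1+\rho^2\le1+\rho$ for $\rho\in[0,1]$ we get $\tfrac{1}{2(1-\rho^4)}=\tfrac{1}{2(1-\rho^2)(1+\rho^2)}\ge\tfrac{1}{2(1+\rho)^2(1-\rho)}=\tfrac{\Ts}{2(1+\rho)^2}$, which is the claimed bound. The one genuinely delicate point is the boundary reduction in the second paragraph — turning ``$\spec(A)$ equals $\rho$'' into ``an endpoint sits on $\partial\Delta_\rho$'' rather than merely inside $\Delta_\rho$; everything after that is a short edge-by-edge computation, where the only thing to get right is which of the two summands $\tfrac{1}{dh}$, $\tfrac{1}{hl}$ to retain on each edge so that the three bounds line up to the common value $\sigma_w^2/(2(1-\rho^4))$.
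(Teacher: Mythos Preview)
Your proof is correct and follows essentially the same strategy as the paper: the bound on $\hat{J}_{\min}$ is identical (global minimum of $\hat{J}$ at the origin, Appendix~\ref{app.convexity}), and for $\hat{J}_{\max}$ both arguments reduce to showing that some endpoint $(b(\lambda^\star),a(\lambda^\star))$ lies on $\partial\Delta_\rho$ and then performing an edge-by-edge estimate.

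The execution differs slightly. The paper uses the ordering $d(\mf)\le d(\Lf)$ to pin down which endpoint can sit on which edge, and then bounds $h$, $d$, $l$ separately against their maxima over $\Delta_\rho$, arriving directly at $\sigma_w^2/(2(1+\rho)^2(1-\rho))$. You instead treat a generic endpoint on a generic edge and bound the relevant product $d\,h$ (on $X_\rho Z_\rho$), $h\,l$ (on $Y_\rho Z_\rho$), or $d\,l$ via AM--GM (on $X_\rho Y_\rho$), obtaining the common intermediate value $\sigma_w^2/(2(1-\rho^4))$ before relaxing to the stated bound via $1+\rho^2\le 1+\rho$. Your route avoids the $d(\mf)\le d(\Lf)$ case distinction and yields a marginally sharper intermediate inequality, at the cost of one extra comparison at the end; the paper's route is more direct to the final constant. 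Both are short and equivalent in substance.
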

	\vsp
\begin{proof}
	The inequality $\hat{J}_{\min} \ge \sigma_w^2$ follows from the fact that $\hat{J}$, as a function of  $(b,a)$, takes its minimum value  at the origin; see Appendix~\ref{app.convexity}.   
	The proof for $\hat{J}_{\max}$ utilizes the fact that for any constant parameters $(\alpha,\beta,\gamma)$	and fixed condition number, the spectral radius $\spec(A)$ corresponds to the smallest $\rho$-linear convergence triangle $\Delta_\rho$ that contains the line segment $(b(\lambda),a(\lambda))$ for $\lambda\in[\mf,\Lf]$. Thus, at least one of the end points $(b(\mf),a(\mf))$ or $(b(\Lf),a(\Lf))$ will be on the boundary of the triangle $\Delta_{\spec(A)}$. Combining this with the fact that $d(\mf)\le d(\Lf)$, it follows that at least one of the following holds
	\begin{align*}
		(b(\mf),a(\mf)) 
		&\;\in\;
		X_\rho Z_\rho \text{ or } X_\rho Y_\rho,
		\\
		(b(\Lf),a(\Lf)) 
		&\;\in\;
		Y_\rho Z_\rho \text{ or } X_\rho Y_\rho.
	\end{align*}
	Together  with the concrete values of vertices~\eqref{eq.VerticesDelta} in terms of $\rho$, this yields
	\begin{align}\label{eq.heperProofKappaIndepLowerBoundJhat1}
		1\,-\,\rho
		\;\ge\;
		\min \left\{
		h(\mf),\,h(\Lf),\, l(\Lf)/(1\,+\,\rho),\,d(\mf)/(1\,+\,\rho)
		\right\}
	\end{align}
	Also, using Theorem~\ref{thm.varianceJhat} and noting that the maximum values that $h(\lambda)$, $d(\lambda)$, and $l(\lambda)$ can take among $\Delta_\rho$ are given by
	$1+\rho^2$,
	$(1+\rho)^2$, and
	$(1+\rho)^2$,
	respectively, we can write
	\begin{align}\label{eq.heperProofKappaIndepLowerBoundJhat2}
		\begin{split}
			\hat{J}(\mf)
			&\;\ge\;
			\dfrac{\sigma_w^2}{2h(\mf) d(\mf)}
			\;\ge\;
			\max \left\{
			\dfrac{\sigma_w^2}{2h(\mf) (1+\rho)^2},
			\dfrac{\sigma_w^2}{2 d(\mf) (1+\rho^2)}
			\right\},
			\\[0.15cm]
			\hat{J}(\Lf)
			&\;\ge\;
			\dfrac{\sigma_w^2}{2h(\Lf) l(\Lf)}
			\;\ge\;
			\max \left\{
			\dfrac{\sigma_w^2}{2h(\Lf) (1+\rho)^2}
			,
			\dfrac{\sigma_w^2}{2 l(\Lf) (1+\rho^2)}
			\right\}.
		\end{split}
	\end{align}
	Finally, by the convexity of $\hat{J}$ (see Appendix~\ref{app.convexity}), we have $\hat{J}_{\max}\ge\max\{ \hat{J}(\mf),\hat{J}(\Lf)\}$. Combining this   with~\eqref{eq.heperProofKappaIndepLowerBoundJhat1} and~\eqref{eq.heperProofKappaIndepLowerBoundJhat2} completes the proof. 
\end{proof}	
	
	\vsp
	The lower bounds in Theorem~\ref{thm.LowerBoundSettlingTimeBased} follow from combining Proposition~\ref{prop.LowerBoundSettlingTimeBased} with the expression for $J$ in Theorem~\ref{thm.varianceJhat}.

	\vspace*{-2ex}
	\subsection{Proof of Theorem~\ref{thm.mainResultHBLike}}
	As described in Section~\ref{sec.Tuning}, the parameters in Theorem~\ref{thm.mainResultHBLike} are obtained by placing the end points of the horizontal line segment $(b(\lambda),a(\lambda))$ parameterized by $\lambda\in[\mf,\Lf]$ at the edges $X_\rho Z_\rho$ and $Y_\rho Z_\rho$ of the $\rho$-linear convergence triangle $\Delta_\rho$. These parameters can be equivalently represented  by~\eqref{eq.parametersHeavy-ballLike} where the scalar $c$ given in Lemma~\ref{lem.individualJhatHeavyBallLike} satisfies $c\in[0,1]$ if and only if $\Ts\le(\kappa+1)/2$ and it satisfies $c\in[-1,0]$ if and only if $\Ts\ge (\kappa+1)/2$. The proof of Theorem~\ref{thm.mainResultHBLike} follows from combining Lemma~\ref{lem.individualJhatHeavyBallLike} and Propositions~\ref{prop.JHeavyBallLikeBounds} and~\ref{prop.HBlikeNegative}.

	\vspace*{-1ex}
	\section{Concluding remarks}
	\label{sec.concludingRemarks}
	
	We have examined the amplification of stochastic disturbances for a class of two-step momentum algorithms in which the iterates are perturbed by an additive white noise which arises from uncertainties in gradient evaluation or in computing the iterates. For both noise models, we establish lower bounds on the product of the settling time and the smallest/largest steady-state variance of the error in the optimization variable. These bounds scale with $\kappa^2$ for all stabilizing parameters, which reveals a fundamental limitation imposed by the condition number $\kappa$ in designing algorithms that tradeoff noise amplification and convergence rate. In addition, we provide a novel geometric viewpoint of stability and $\rho$-linear convergence. This viewpoint brings insight into the relation between noise amplification, convergence rate, and algorithmic parameters. It also allows us to (i) take an alternative approach to optimizing convergence rates for standard algorithms; (ii) identify key similarities and differences between the iterate and gradient noise models; and (iii) introduce parameterized families of algorithms for which the parameters can be continuously adjusted to tradeoff noise amplification and settling time. By utilizing positive and negative momentum parameters in accelerated and decelerated regimes, respectively, we demonstrate that a parameterized family of the heavy-ball-like algorithms can achieve order-wise Pareto optimality for all settling times and both noise models. 
		
	Our ongoing work focuses on extending these results to algorithms with more complex structures including update strategies that utilize information from more than the last two iterates and time-varying algorithmic parameters~\cite{padsei22}. It is also of interest to identify fundamental performance limitations of stochastic gradient descent algorithms in which both additive and multiplicative stochastic disturbances exist~\cite{huseiran17,huseiles21}. 
	
		\vspace*{-1ex}
	\section*{Acknowledgments}
	
	We thank Laurent Lessard for his comments on an earlier draft of this manuscript. 
	
		\vspace*{-1ex}
	\appendix
	\subsection{Settling time}\label{app.settlingTime}
	
	If $\rho$ denotes the linear convergence rate, $\Ts=1/(1-\rho)$ quantifies the {\it settling time}. The inequality in~\eqref{eq.linearConvergenceFormal} shows that $c\rho^t\le \epsilon$ provides a sufficient condition for reaching the accuracy level $\epsilon$ with $\norm{\psi^t}_2/\norm{\psi^0}_2\le\epsilon$. Taking the logarithm of $c\rho^t\le \epsilon$ and using the first-order Taylor series approximation $\log \, (1-x)\approx -x$ around $x = 0$ yields a sufficient condition on the number of iterations $t$ for an algorithm to reach $\epsilon$-accuracy,
	\[
	t
	\,\ge\,
	\log \, (\epsilon/c)/\log \, (1 - 1/\Ts)
	\;\approx\;
	\Ts\log \, (c/\epsilon).
	\]
	In continuous time, the sufficient condition for reaching $\epsilon$-accuracy $c \mathrm{e}^{-\rho t}\le \epsilon$ yields
	$
	t
	\ge
	\log \, (c/\epsilon)/{\rho},
	$
	and $\Ts=1/\rho$ can be used to asses the settling time.
	
		\vspace*{-2ex}
	\subsection{Convexity of modal contribution $\hat{J}$ to noise amplification}
	\label{app.convexity}
	To show the convexity of $\hat{J}$, we use the fact that the function $g(x)=\prod_{i=1}^{d}x_i^{-1}$ is convex over the positive orthant $\R^d_{++}$. This can be verified by noting that its Hessian satisfies
	\[
	\nabla^2g(x)
	\;=\;
	g(x)\left(\diag(x) \,+\, xx^T\right)\succ 0
	\] 
	where $\diag(\cdot)$ is the diagonal matrix.
	By Theorem~\ref{thm.varianceJhat}, we have
	\begin{align*}
		\dfrac{\hat{J}}{\sigma_w^2}
		&\;=\;
		\dfrac{d\,+\,l}{2\,d\,h\,l}
		\;=\;
		\dfrac{1}{2\,h\,d}
		\,+\,
		\dfrac{1}{2\,h\,l}
	\end{align*}
	where we have dropped the dependence on $\lambda$ for simplicity. The functions $1/(2hd)$ and $1/(2hl)$ are both convex over the positive orthant $d,h,l>0$. Thus, $\hat{J}$ is convex with respect to $(d,h,l)$. In addition, since $d,h$, and $l$ are all affine functions of $a$ and $b$, we can use the equivalence relation in~\eqref{eq.equivalenceDelta} to conclude that $\hat{J}$ is also convex in $(b,a)$ over the stability triangle $\Delta$. Finally, since $b(\lambda)$ and $a(\lambda)$ are affine in $\lambda$, it follows that for any stabilizing parameters, $\hat{J}$ is also convex with respect to $\lambda$ over the interval $[\mf,\Lf]$.
	
	Convexity of $\hat{J}$ allows us to use first-order conditions to find its minimizer. In particular, since for $\sigma_w = 1$
	\begin{align*}
		\dfrac{\partial \hat{J}}{\partial d}
		&\;=\;
		-\dfrac{1}{2hd^2},\quad
		\dfrac{\partial \hat{J}}{\partial l}
		\;=\;
		-\dfrac{1}{2hl^2},\quad
		\dfrac{\partial \hat{J}}{\partial h}
		\;=\;
		-\dfrac{l\,+\,d}{2h^2dl}
		\\[0.15 cm]
		\dfrac{\partial d}{\partial a}
		&\;=\;
		\dfrac{\partial l}{\partial a}
		\;=\;
		-\dfrac{\partial h}{\partial a}
		\;=\;
		\dfrac{\partial d}{\partial b}
		\;=\;
		-\dfrac{\partial l}{\partial b}
		\;=\;
		1,
		\quad
		\dfrac{\partial h}{\partial b}
		\;=\;
		0
	\end{align*}
	it is easy to verify that 
	$
	{\partial \hat{J}}/{\partial a}
	=
	{\partial \hat{J}}/{\partial b}
	=
	0
	$
	at $a=b=0$. Thus,  $\hat{J}$ takes its minimum  
	$\hat{J}_{\min}=\sigma_w^2$  over the stability triangle $\Delta$ at $a=b=0$, which corresponds to $d=h=l=1$.
		\vspace*{-2ex}
	\subsection{Proofs of Section~\ref{sec.modalDecomp}}
	\label{app.ModalDecomp}
	
	\subsubsection{Proof of Lemma~\ref{lem.expstability}}
	We start by noting that
	$
	\spec(M)
	\le\rho
	$ if and only if
	$\spec(M')\le1
	$
	where $M' \DefinedAs M/\rho$.
	The characteristic polynomial associated with $M'$, 
	$
	F_\rho (z) = z^2 + ({b}/{\rho})z + {a}/{\rho^2},
	$
	allows us to use similar arguments to those presented in the proof of Lemma~\ref{lem.stability} to show that
	\begin{align}\label{eq.proofExpStabHelper1}
		\spec(M')\,\le\, 1
		\quad
		\iff
		\quad 
		(b/\rho,\,a/\rho^2)
		\,\in\,
		\Delta_1
	\end{align}
	where $
	\Delta_1
	\DefinedAs	
	\left\{ 
	(b,a) 
	\; | \;
	|b| \, - \, 1 \, \leq \, a \, \leq \, 1
	\right\}
	$
	is the closure of the set $\Delta$ in~\eqref{eq.Delta}. Finally, the condition on the right-hand side of~\eqref{eq.proofExpStabHelper1} is equivalent to
	$(b,a)\in\Delta_\rho$, where $\Delta_\rho$ is given by~\eqref{eq.Deltarho}.  
	
		\vspace*{-2ex}
	\subsection{Proofs of Section~\ref{sec.Tuning}}
	\label{app.Tuning}
	\subsubsection{Proof of Lemma~\ref{lem.individualJhatHeavyBallLike}}
	
	We show that the parameters ($\alpha,\beta,\gamma$) in~\eqref{eq.parametersHeavy-ballLike} place the points $(b(\mf),a(\mf))$ and $(b(\Lf),a(\Lf))$ on the edges $X_\rho Z_\rho$ and $Y_\rho Z_\rho$ of the $\rho$-linear convergence triangle $\Delta_\rho$, respectively. In particular, we can use a scalar $c\in[-1,1]$ to parameterize the end points as
	\be
	\ba{rcl}
	(b(\mf),a(\mf))
	&\!\!\!=\!\!\!&
	(-(1+c)\rho,\;c\rho^2)
	\\[0.15 cm]
	(b(\Lf),a(\Lf))
	&\!\!\!=\!\!\!&
	((1+c)\rho,\;c\rho^2).
	\ea
	\non
	\ee
	Using the definition of $a$ and $b$ in~\eqref{eq.ab}, we can solve the above equations for ($\alpha,\beta,\gamma$) to verify the desired parameters. Thus, the algorithm achieves the convergence rate $\rho$. In addition  the points $c=0$ and $c=1$ recover gradient descent and heavy-ball method with the parameters that optimize the convergence rate; see Table~\ref{tab:rates}.
	
	Furthermore, $h$, $d$, and $l$ in~\eqref{eq.dhlDefinition} are given by 
	\begin{subequations}
		\be
		\ba{rclrl}
		h(\mf)
		&\!\!\!=\!\!\!&
		h(\Lf)
		&\!\!\!=\!\!\!&
		1
		\,-\,
		c\rho^2
		\\[0.1cm]
		d(\mf)
		&\!\!\!=\!\!\!&
		l(\Lf)
		&\!\!\!=\!\!\!&
		(1\,-\,\rho)(1\,-\,c\rho)
		\\[0.1cm]
		l(\mf)
		&\!\!\!=\!\!\!&
		d(\Lf)
		&\!\!\!=\!\!\!&
		(1\,+\,\rho)(1\,+\,c\rho)
		\ea
		\label{eq.dhlHeayBallLike}
		\ee
		and the condition number is determined by
		\begin{align}\label{eq.kappaHeavyBallLikeProof}
			\kappa
			\;=\;
			\alpha \Lf/(\alpha \mf)
			\;=\;
			{d(\Lf)}/{d(\mf)}
			\;=\;
			{l(\mf)}/{d(\mf)}.
		\end{align}
		Combining~\eqref{eq.kappaHeavyBallLikeProof} with~\eqref{eq.dhlHeayBallLike}, and rearranging terms yields the desired expression for $c$ in terms of $\rho$ and $\kappa$.
		
		The analytical expressions in Theorem~\ref{thm.varianceJhat} imply that for the parameters in~\eqref{eq.parametersHeavy-ballLike}, the function $\hat{J}(\lambda)$ is symmetric over $[\mf,\Lf]$, i.e.,
		$
		\hat{J}(\lambda)
		=
		\hat{J}(\mf + \Lf - \lambda)
		$ for all $\lambda\in[\mf,\Lf]$. In addition, as we demonstrate in Appendix~\ref{app.convexity}, $\hat{J}(\lambda)$ is convex. Thus, $\hat{J}(\lambda)$ attains its maximum at $\lambda=\mf$ and $\lambda=\Lf$ and we can use the expression for $\hat{J}(\lambda)$ in Theorem~\ref{thm.varianceJhat} to obtain the maximum,
		\begin{align}\label{eq.proofHeavyBallLikeHelper1}
			\hat{J}(\mf)
			\;=\;
			\dfrac{\sigma_w^2(d(\mf) + l(\mf))}{2 h(\mf) d(\mf) l(\mf) }
			\;=\;
			\dfrac{\sigma_w^2(\kappa \,+\, 1)}{2 h(\mf) l(\mf) }
		\end{align}
		where the second equality follows from~\eqref{eq.kappaHeavyBallLikeProof}. Combining~\eqref{eq.dhlHeayBallLike} and~\eqref{eq.proofHeavyBallLikeHelper1} yields the expression for $\hat{J}(\mf)$.
		
		Also, symmetry and convexity imply that $\hat{J}(\lambda)$ attains its minimum at the midpoint 
		$\lambda=\hat{\lambda}\DefinedAs(\mf+\Lf)/2 
		= 
		(1+\beta)/\alpha
		$. 
		This point corresponds to $(b(\hat{\lambda}),a(\hat{\lambda}))= (0,c\rho^2)$ in the ($b,a$)-plane and it thus satisfies
		\begin{align}\label{eq.dhlForLambdaBar}
			h(\hat{\lambda})\,=\, 1\,-\,c\rho^2, 
			\;\;
			d(\hat{\lambda})\,=\, l(\hat{\lambda})\,=\, 1 \,+\, c\rho^2.
		\end{align}
	\end{subequations}
	Using~\eqref{eq.dhlForLambdaBar} to evaluate the expression for $\hat{J}(\lambda)$ at the point $\lambda=\hat{\lambda}$ yields the desired minimum value.
	
	\subsubsection{Proof of Proposition~\ref{prop.JHeavyBallLikeBounds}}
	Using the expressions established in Lemma~\ref{lem.individualJhatHeavyBallLike}, it is straightforward to verify that
	\begin{align*}
		\hat{J}(\mf)\times \Ts
		&\;=\;
		\sigma_w^2 \, p_{1c} (\rho) \kappa(\kappa \,+\, 1)
		\\[0.cm]
		\hat{J}(\hat{\lambda})
		\times\Ts
		&\;=\;
		\sigma_w^2 \,\kappa \, p_{2c} (\rho)
	\end{align*}
	and that, for the gradient noise model ($\sigma_w=\alpha\sigma$), we have 
	\begin{align*}
		\hat{J}(\mf)\times \Ts
		&\;=\;
		\sigma^2 \, p_{3c} (\rho) \kappa(\kappa \,+\, 1)
		\\[0.cm]
		\hat{J}(\hat{\lambda})
		&\;=\;
		\sigma^2 \,\kappa \, p_{2c} (\rho)
	\end{align*}
	where the functions $p_{1c}(\rho)$-$p_{4c}(\rho)$ are given by~\eqref{eq.p_iDef}. Thus, the expressions for $J_{\max}$ and $J_{\min}$ follow from Corollary~\ref{cor.JHeavyBallLike}. The bounds on $p_{1c}(\rho)$-$p_{4c}(\rho)$ follow from the fact  that, for $\rho \in (0,1)$, we have
	\[
	q_c (\rho)
	\;=\;
	\dfrac{1 - c \rho}{1 - c \rho^2}
	\;\in\;
	\left\{
	\ba{ll}
	\,[1/(1+c\rho), 1] & \quad c \, \in \, [0,1]
	\\
	\,[1/2,2]
	& \quad c \, \in \, [-1,0] .
	\ea
	\right.
	\]
	This completes the proof.
	
	\subsubsection{Proof of Proposition~\ref{prop.HBlikeNegative}}
	
	Using the expressions established in Lemma~\ref{lem.individualJhatHeavyBallLike}, it is straightforward to verify that
	\begin{align*}
		\hat{J}(\mf)
		&\;=\;
		\sigma_w^2 \, p_{5c}(\rho)\,(1 \,+\, 1/\kappa)\,\Ts
		\\[0.cm]
		\hat{J}(\hat{\lambda})
		&\;=\;
		\sigma_w^2 p_{6c}(\rho) \Ts/\kappa
	\end{align*}
	where $p_{5c}$ and $p_{6c}$ are given by Proposition~\ref{prop.HBlikeNegative}. Thus, the expressions for $J_{\max}$ and $J_{\min}$ follow from Corollary~\ref{cor.JHeavyBallLike}. The bounds on $p_{5c}$ and $p_{6c}$ follow from $c \in [-1,0]$ and $\rho \in (0,1)$.
	
	\vspace*{-2ex}
	\subsection{Lyapunov equation and the steady-state variance}
	\label{app.LyapuEquation}
	
	For discrete-time LTI system~\eqref{eq.ss}, the covariance matrix 
	$
	P^t
	\DefinedAs  
	\EX 
	\left(
	\psi^t
	(\psi^t)^T
	\right)
	$
	of the state vector $\psi^t$ satisfies the linear recursion
	\begin{subequations}
		\be
		P^{t+1} 
		\; = \; 
		A \, P^t A^T 
		\; + \; 
		B B^T
		\label{eq.LyapPt}
		\ee
		and its steady-state limit 
		\be
		P
		\; \DefinedAs \; 
		\lim_{t \, \to \, \infty} 
		\,
		\EX 
		\left[
		\psi^t
		(\psi^t)^T
		\right]
		\label{eq.P}
		\ee 
		is the unique solution to the algebraic Lyapunov equation~\cite{kwasiv72},
		\be
		P
		\; = \; 
		A \, P A^T 
		\; + \; 
		B B^T.
		\label{eq.Lyap}
		\ee
		For stable LTI systems, performance measure~\eqref{eq.Jnew} can be computed using 
		\be
		J
		\; = \;
		\lim_{t \, \to \, \infty} 
		\, 
		\dfrac{1}{t}
		\sum_{k \, = \, 0}^{t}
		\trace
		\left( Z^k \right)
		\; = \;
		\trace
		\left( Z \right)
		\label{eq.Jus-P}
		\ee
	\end{subequations} 	
	where 
	$
	Z = C P C^T	
	$	
	is the steady-state limit of the output covariance matrix 
	$
	Z^t
	\DefinedAs  
	\EX 
	\left[
	z^t
	(z^t)^T
	\right]
	=
	C P^t C^T.
	$
	We can prove Theorem~\ref{thm.varianceJhat} by finding the solution $P$ to~\eqref{eq.Lyap} for the two-step momentum algorithm. 
	

\end{document}